\font\msbm=msbm10
\numberwithin{equation}{section}
\theoremstyle{plain}
\newtheorem{satz}{Theorem}[section]
\newtheorem{defi}[satz]{Definition}
\newtheorem{cor}[satz]{Corollary}
\newtheorem{lem}[satz]{Lemma}
\newtheorem{prop}[satz]{Proposition}
\newtheorem{rem}[satz]{Remark}
\newcommand{\mix}{{\rm mix}}
\newcommand{\re}{\ensuremath{\mathbb{R}}}\newcommand{\N}{\ensuremath{\mathbb{N}}}
\newcommand{\zz}{\ensuremath{\mathbb{Z}}}\newcommand{\C}{\ensuremath{\mathbb{C}}}
\newcommand{\T}{\ensuremath{\mathbb{T}^d}}\newcommand{\tor}{\ensuremath{\mathbb{T}}}
\newcommand{\com}{\ensuremath{\mathbb{C}}}
\newcommand{\Z}{{\ensuremath{\zz}^d}}
\newcommand{\Zp}{{\zz^d_+}}
\newcommand{\n}{\ensuremath{{\N}_0}}
\newcommand{\R}{\ensuremath{{\re}^d}}
\newcommand{\cs}{\ensuremath{\mathcal S}}
\newcommand{\cd}{\ensuremath{\mathcal D}}
\newcommand{\cl}{\ensuremath{\mathcal L}}
\newcommand{\cm}{\ensuremath{\mathcal M}}
\newcommand{\ca}{\ensuremath{\mathcal A}}
\newcommand{\cx}{\ensuremath{\mathcal X}}
\newcommand{\crp}{\ensuremath{\mathcal R}}
\newcommand{\ch}{\ensuremath{\mathcal H}}
\newcommand{\co}{\ensuremath{\mathcal O}}
\newcommand{\ce}{\ensuremath{\mathcal E}}
\newcommand{\ct}{\ensuremath{\mathcal T}}
\newcommand{\cn}{\ensuremath{\mathcal N}}
\newcommand{\cf}{\ensuremath{\mathcal F}}
\newcommand{\cfi}{\ensuremath{{\cf}^{-1}}}
\newcommand{\ci}{\ensuremath{\mathcal I}}
\newcommand{\cg}{\ensuremath{\mathcal G}}
\newcommand{\cv}{\ensuremath{\mathcal V}}
\newcommand{\cp}{\ensuremath{\mathcal P}}
\newcommand{\trigpol}{\mathcal{T}^{\ell}}
\newcommand{\trigpolk}{\mathcal{T}^{k}}
\newcommand{\kone}{\ensuremath{|k|_1}}
\newcommand{\kinf}{\ensuremath{|k|_{\infty}}}
\newcommand{\lone}{\ensuremath{|\ell|_1}}
\newcommand{\linf}{\ensuremath{|\ell|_{\infty}}}
\newcommand{\hiso}{\ensuremath{H^{\gamma}(\tor^d)}}
\newcommand{\hisobeta}{\ensuremath{H^{\beta}(\tor^d)}}
\newcommand{\ltwo}{\ensuremath{L_{2}(\tor^d)}}
\newcommand{\deltak}{\ensuremath{\|\delta_k(f)\|_2}}
\newcommand{\deltal}{\ensuremath{\|\delta_{\ell}(f)\|_2}}
\newcommand{\talpha}{\ensuremath{\tilde{\alpha}}}
\newcommand{\tbeta}{\ensuremath{\tilde{\beta}}}
\newcommand{\tk}{\ensuremath{\tilde{k}}}
\newcommand{\tgamma}{\ensuremath{\tilde{\gamma}}}
\newcommand{\ttalpha}{\ensuremath{\tilde{\tilde{\alpha}}}}
\newcommand{\ttbeta}{\ensuremath{\tilde{\tilde{\beta}}}}
\newcommand{\ttk}{\ensuremath{\tilde{\tilde{k}}}}
\newcommand{\ttgamma}{\ensuremath{\tilde{\tilde{\gamma}}}}
\newcommand{\fhab}{\ensuremath{\|f\|_{H^{\alpha,\beta}(\tor^d)}}}
\newcommand{\fmixed}{\ensuremath{\|f\|_{H^{\alpha}_{\mix}(\tor^d)}}}
\newcommand{\fmixedq}{\ensuremath{\|f\|^*_{H^{\alpha}_{\mix}(\tor^d)}}}
\newcommand{\fiso}{\ensuremath{\|f\|_{H^{\gamma}(\tor^d)}}}
\newcommand{\hmixed}{\ensuremath{H^{\alpha}_{\mix}(\tor^d)}}
\newcommand{\hmixedta}{\ensuremath{H^{\talpha}_{\mix}(\tor^d)}}
\newcommand{\hmixedtta}{\ensuremath{H^{\zeta}_{\mix}(\tor^d)}}
\newcommand{\hmixedga}{\ensuremath{H^{\gamma}_{\mix}(\tor^d)}}
\def\II{{\mathbb I}}
\def\CC{{\mathbb C}}
\def\ZZ{{\mathbb Z}}
\def\NN{{\mathbb N}}
\def\RR{{\mathbb R}}
\def\TTd{{\mathbb T}^d}
\newcommand{\fmixedv}{\ensuremath{\|f\|_{H^{\alpha}_{\mix}(\tor^d)}}}
\newcommand{\fmixedqv}{\ensuremath{\|f\|^*_{H^{\alpha}_{\mix}(\tor^d)}}}
\newcommand{\hmixedv}{\ensuremath{H^{\alpha}_{\mix}(\tor^d)}}
\newcommand{\hmixedtav}{\ensuremath{H^{\talpha\bf{1}}_{\mix}(\tor^d)}}
\newcommand{\hmixedgav}{\ensuremath{H^{\gamma}_{\mix}(\tor^d)}}
\newcommand{\hab}{\ensuremath{H^{\alpha,\beta}(\tor^d)}}
\definecolor{dgreen}{rgb}{0,.6,0}
\newcommand{\red}[1]{{\color{red}{#1}}}
\newcommand{\blue}[1]  {{\color{blue}{#1}}}
\newcommand{\dgreen}[1]  {{\color{dgreen}{#1}}}
\newcommand{\Span}{{\rm span \, }}
\newcommand{\rank}{{\rm rank \, }}
\newcommand{\supp}{{\rm supp \, }}
\newcommand{\sign}{{\rm sign \, }}
\newcommand{\sinc}{{\rm sinc}\,}
\newcommand{\inti}{\int_{-\infty}^\infty}
\newcommand{\sumk}{\sum_{k=-\infty}^\infty}
\newcommand{\summ}{\sum_{m=-\infty}^\infty}
\newcommand{\suml}{\sum_{\ell=-\infty}^\infty}
\newcommand{\sumj}{\sum_{j=0}^\infty}
\newcommand{\sumld}{{\sum_{\ell_1 = -\infty}^\infty \sum_{\ell_2 = - \infty}^\infty }}
\newcommand{\wt}{\widetilde}
\newcommand{\dist}{{\rm dist \, }}
\newcommand{\ls}{\lesssim}
\newcommand{\eps}{\varepsilon}
\newcommand{\bproof}{\begin{proof}}
\newcommand{\eproof}{\end{proof}}
\renewcommand{\proofname}{\normalfont{\textbf{Proof}}}
\renewcommand{\qedsymbol}{$\blacksquare$}
\renewcommand{\theenumi}{\roman{enumi}}
\renewcommand{\labelenumi}{{\rm (\theenumi)}}
\newlength{\fixboxwidth}
\newcommand{\fix}[1]{\marginpar{\fbox{\parbox{\fixboxwidth}
{\raggedright\tiny #1}}}}
\newcommand{\be}{\begin{equation}}
\newcommand{\ee}{\end{equation}}
\newcommand{\beq}{\begin{eqnarray}}
\newcommand{\beqq}{\begin{eqnarray*}}
\newcommand{\eeq}{\end{eqnarray}}
\newcommand{\eeqq}{\end{eqnarray*}}
\def\Int{\mbox{Int}}
\def\Id{\mbox{Id}}
\begin{document}
\title{Sampling on energy-norm based sparse grids for the optimal recovery of Sobolev type functions in $H^\gamma$}

\author{Glenn Byrenheid$^a$, Dinh D\~ung$^b$\footnote{Corresponding author. Email: dinhzung@gmail.com}, Winfried Sickel$^c$, Tino Ullrich$^a$ \\\\
$^a$Hausdorff-Center for Mathematics, 53115 Bonn, Germany\\
$^b$Vietnam National University, Hanoi, Information Technology Institute \\
144, Xuan Thuy, Hanoi, Vietnam\\
$^c$ Friedrich-Schiller-University Jena, Ernst-Abbe-Platz 2, 07737 Jena, Germany
}

\date{May 27, 2014}

\maketitle

\begin{abstract} 
We investigate the rate of convergence of linear sampling numbers of the embedding
$H^{\alpha,\beta} (\tor^d) \hookrightarrow H^\gamma (\tor^d)$. Here $\alpha$ governs the mixed smoothness and
$\beta$ the isotropic smoothness in the space $H^{\alpha,\beta}(\tor^d)$ of hybrid smoothness, whereas $H^{\gamma}(\tor^d)$ denotes the isotropic Sobolev space. If $\gamma>\beta$ we obtain sharp polynomial decay rates for the first embedding realized by sampling operators based on ``energy-norm based sparse grids''
for the classical trigonometric interpolation. This complements earlier work by Griebel, Knapek and D\~ung, Ullrich, where general linear approximations have been
considered. In addition, we study the embedding $H^\alpha_{\mix} (\tor^d) \hookrightarrow H^{\gamma}_{\mix}(\tor^d)$ and achieve optimality for Smolyak's algorithm applied to the classical trigonometric interpolation. This can be applied to investigate the sampling numbers for the embedding $H^\alpha_{\mix} (\tor^d) \hookrightarrow L_q(\tor^d)$ for $2<q\leq \infty$ where again Smolyak's algorithm yields the optimal order. The precise decay rates for the sampling numbers in the mentioned situations always coincide with those for the approximation numbers, except probably in the limiting situation $\beta = \gamma$ (including the embedding into $L_2(\tor^d)$). The best what we could prove there is a (probably) non-sharp results with a logarithmic gap between lower and upper bound.
\end{abstract}



\section{Introduction}


The efficient approximation of multivariate functions is a crucial task for the numerical treatment
of several real-world problems. Typically the computation time of approximating algorithms grows
dramatically with the number of variables $d$. Therefore, one is interested in reasonable model assumptions 
and corresponding efficient algorithms. In fact, a large class of solutions of the electronic Schr\"odinger equation 
in quantum chemistry does not only belong to a Sobolev spaces with mixed regularity, one also knows 
additional information in terms of isotropic smoothness properties, see Yserentant's recent lecture notes \cite{Ys10} and 
the references therein. This type of regularity is precisely expressed by the spaces $\hab$, defined in 
Section \ref{spaces} below. Here, the parameter $\alpha$ reflects the smoothness in the dominating mixed sense and 
the parameter $\beta$ reflects the smoothness in the  isotropic sense. We aim at approximating such functions in an 
energy-type norm, i.e.,  we measure the approximation error in an isotropic Sobolev space $H^{\gamma}(\tor^d)$. This is motivated
by the use of Galerkin methods for the $H^1(\tor^d)$-approximation of the solution of general elliptic variational problems see, e.g., \cite{BG99,BG04,GN00,Gr,GN09,SST08}.
The present paper can be seen as a continuation of \cite{DiUl13}, where finite-rank approximations in the sense of approximation numbers were studied. 
The latter are defined as 
$$
  a_m(T:X\to Y):= \inf\limits_{\substack{A:X\to Y\\ \rank A \leq m}}
\sup_{\|f\|_X\leq 1}\, \| Tf-Af\|_Y\quad,\quad m\in \N\,,
$$
where $X,Y$ are Banach spaces and $T\in \mathcal{L}(X,Y)$, where $\mathcal{L}(X,Y)$  denotes the space of all bounded linear operators $T:X\to Y$. In contrast to that, we restrict the class of admissible algorithms even further in this paper and deal with the problem of the optimal recovery
of $H^{\alpha,\beta}$-functions from only a finite number of function values, where the optimality in the worst-case setting is commonly measured in 
terms of linear sampling numbers
$$
  g_m(T:X\to Y):=\inf_{(x_j)_{j=1}^m\subset\tor^d}\inf_{(\psi_j)_{j=1}^m\subset Y}
\sup_{\|f\|_X\leq 1}\, \Big\|\, Tf-\sum_{j=1}^mf(x_j)\psi_j(\cdot)\, \Big\|_Y\quad,\quad m\in \N\,.
$$
Here, $X \subset C(\tor^d)$ denotes a Banach space of functions on $\tor^d$ and $T\in \mathcal{L}(X,Y)$. The inclusion of $X$ in $C(\tor^d)$
is necessary to give a meaning to function evaluations at single points $x_j \in \tor^d$. 

We will mainly focus 
on the situation $X = H^{\alpha,\beta}(\tor^d)$ and $Y = H^{\gamma}(\tor^d)$. The condition $\alpha>\gamma-\beta$ ensures a compact embedding 
\be\label{f07}
    I_1:H^{\alpha,\beta}(\tor^d) \to H^{\gamma}(\tor^d)
\ee
such that we can ask for the asymptotic decay of the sampling numbers 
$$g_m(I_1:H^{\alpha,\beta}(\tor^d)\to H^{\gamma}(\tor^d))$$ in $m$. By investing more isotropic smoothness $\gamma\geq 0$ in the 
target space $H^{\gamma}(\tor^d)$ than $\beta \in \re$ in the source space $H^{\alpha,\beta}$ we encounter two surprising effects for the sampling numbers 
$g_m(I_1)$ if $\gamma>\beta$. The main result of the present paper is the following asymptotic order 
\be
    g_m(I_1)\asymp a_m(I_1) \asymp m^{-(\alpha+\beta-\gamma)}\quad,\quad m\in \N\,,
\ee
which shows, on the one hand, the asymptotic equivalence to the approximation numbers and, on the other hand, the  purely polynomial decay rate, i.e.,  no logarithmic perturbation. In the case $\beta=0$ sampling numbers for these kind of embeddings were also studied in \cite{GH14}. The current paper can be considered as a partial periodic counterpart of the recent papers \cite{Di11, Di13} where the author has investigated the nonperiodic situation, namely sampling recovery in $L_q$-norms as well as corresponding isotropic Sobolev norms of functions on $[0,1]^d$ from Besov spaces $B^{\alpha,\beta}_{p,\theta}$ with hybrid smoothness of mixed smoothness $\alpha$ and isotropic smoothness $\beta$. The asymptotic behavior of the approximation numbers $a_m(I_1:H^{\alpha,\beta}(\tor^d)\to H^{\gamma}(\tor^d))$ (including the dependence of all constants on $d$) has been completely determined in \cite{DiUl13}, see the Appendix in this paper for a listing of all relevant results. The present paper is intended as a 
partial extension of the latter reference to the sampling recovery problem. The general observation is the fact that there is no difference in the asymptotic behavior between sampling and general approximation if we impose certain smoothness conditions on the target spaces $Y$. That is $\gamma>\beta$ if $Y=H^{\gamma}(\tor^d)$ and $\gamma >0$ if $Y =  H^{\gamma}_{\mix}(\tor^d)$. 

It turned out, that the critical cases are $\gamma = \beta \geq 0$. We were not able to give the precise decay rate of 
\be\label{f012}
g_m(I_2:H^{\alpha,\beta}(\tor^d)\to H^{\beta}(\tor^d))
\ee
although we are dealing with a Hilbert space setting and additional smoothness in the target space. However, the 
following statement is true if $\alpha>1/2$. We have
$$
    m^{-\alpha}(\log m)^{(d-1)\alpha} \asymp a_m(I_2) \leq g_m(I_2) \lesssim m^{-\alpha}(\log m)^{(d-1)(\alpha+1/2)}\quad,\quad 2\leq m\in \N\,.
$$

Note, that if $\gamma = \beta = 0$ this includes the classical problem of finding the correct asymptotic behavior of the 
sampling numbers for the embedding
\be\label{f02}
    I_3:H^{\alpha}_{\mix}(\tor^d) \to L_2(\tor^d)\,,
\ee
where $H^{\alpha}_{\mix}(\tor^d)$ denotes the Sobolev space of dominating mixed fractional order $\alpha>1/2$.
Originally brought up by Temlyakov \cite{T85} in 1985, this problem attracted much attention in multivariate approximation theory, see 
D\~ung \cite{Di90,Di91,Di92}, Temlyakov \cite{T85,T93,T93b} and the references therein, Sickel \cite{si02,si06}, and Sickel, Ullrich 
\cite{SU1}-\cite{SU2}. 
Temlyakov himself proved for $\alpha>1/2$ and $2\leq m\in \N$ the estimate
\be\label{f01}
 m^{-\alpha} \, (\log m)^{\alpha(d-1)} \asymp a_m(I_3) \leq g_m(I_3) \lesssim m^{-\alpha} \, (\log m)^{(d-1)(\alpha +1)}\,,
\ee
which was later improved by Sickel, Ullrich \cite{SU1} - \cite{SU2}, D\~ung \cite{Di11}, and Triebel \cite{Tr10} to 
\be\label{f05}
  g_m(I_3:\hmixed \to L_2 (\tor^d)) \lesssim m^{-\alpha} \, (\log m)^{(d-1)(\alpha +1/2)}\quad,\quad 2\leq m\in \N\,.
\ee
The estimate for the approximation numbers in \eqref{f01} can be found in \cite[Theorem~III.4.4]{T93b}. What concerns
the exact $d$-dependence we refer to D\~ung, Ullrich \cite[Theorem\ 4.10]{DiUl13} and the recent contribution K\"uhn, Sickel, Ullrich \cite{KSU2}.
There still remains a logarithmic gap of order $(\log m)^{(d-1)/2}$ between the given upper and lower bounds for the sampling numbers. 
It is a general open problem whether sampling operators 
can be as good as general linear operators in this particular situation. Let us refer to Hinrichs, Novak, Vyb\'iral \cite{H} and Novak, Wo{\'z}niakowski \cite{NoWo11} for relations between approximation and sampling numbers in an general context.
In this paper, we did neither close the gap in \eqref{f01} nor shorten it further. However, we were able to recover these results within our new simplified framework in Subsection \ref{beta00}. 

Surprisingly, the situation becomes much more easy, when we replace in \eqref{f02} the target space $L_2(\tor^d)$ by a Lebesgue space $L_q(\tor^d)$ with $q>2$. 
In fact, we observed for 
the embedding 
\be\label{f011}
  I_4:H^{\alpha}_{\mix}(\tor^d) \to L_q(\tor^d)
\ee
with $\alpha>1/2$ the sharp two-sided estimates 
\be\label{f03}
    g_m(I_4) \asymp a_m(I_4)\asymp 
    \left\{\begin{array}{rcl}
        m^{-(\alpha-1/2+1/q)}(\log m)^{(d-1)(\alpha-1/2+1/q)}&:&2<q<\infty\,,\\
        m^{-(\alpha-1/2)}(\log m)^{\alpha(d-1)}&:&q=\infty\,,
    \end{array}\right.
\ee
for $2\leq m \in \N$\,. The first result of type \eqref{f03} was obtained in \cite{Di90,Di91} for the sampling numbers
$g_m(I: B^{\alpha}_{p,\infty}(\tor^d) \to L_q(\tor^d))$ with $1<p<q\le2$,
  the case $q=\infty$ of  \eqref{f03} was observed by Temlyakov \cite{T93},  we refer to D\~ung \cite{Di11} for 
nonperiodic results of type \eqref{f03}.
 Our method allowed for a significant extension of these results with a shorter proof. 
As a vehicle for $2<q<\infty$ we also took a look to the embedding 
\be\label{f010}
    I_5: H^{\alpha}_{\mix}(\tor^d) \to H^{\gamma}_{\mix}(\tor^d)
\ee
with $\alpha>\max\{\gamma,1/2\}$ and observed 
\be\label{f04}
   g_m(I_5) \asymp a_m(I_5)\asymp m^{-(\alpha-\gamma)}(\log m)^{(d-1)(\alpha-\gamma)}\quad,\quad 2\leq m\in \N\,.
\ee
Let us finally mention that the optimal sampling numbers in \eqref{f03} and \eqref{f04} are realized by the well-known Smolyak algorithm. In other words we presented examples where the Smolyak sampling operator yields optimality. It is also used for the upper bound in \eqref{f05}, but so far not clear whether it is the optimal choice. 

All our proofs are constructive. We explicitly construct sequences of sampling operators that yield the optimal approximation order. Let us briefly describe the framework. The sampling operators will be appropriate sums of tensor products of the classical univariate trigonometric interpolation with respect to the equidistant grid 
$$t^m_{\ell}:=\frac{2\pi \ell}{2m+1}, \qquad \ell =0,1, \ldots \, , 2m\, , $$
given by 
\be\label{f015} 
  I_m f(t) := \frac{1}{2m+1}\, \sum_{\ell=0}^{2m}\, f(t^m_{\ell})\, D_m(t-t^m_{\ell})\,,
\ee
where 
$$ D_m(t):= \sum_{|k|\leq m}e^{ikt} = \frac{\sin((m+1/2)t)}{\sin(t/2)}\, , \qquad t \in \re\,.$$
It is well-known that $I_m f \xrightarrow[m\to \infty]{} f$ in $L_2(\tor)$ for every $f\in H^s(\tor)$ with $s>1/2$\,. Due to telescoping series 
argument we may also write 
$$
      f = I_1 f + \sum\limits_{k=1}^{\infty} (I_{2^k}-I_{2^{k-1}})f.
$$
Therefore, we put for $m\in \N_0$
$$\eta_m :=\begin{cases}
I_{2^m}-I_{2^{m-1}} &  \mbox{if} \quad m>0\, , \\
I_1 &   \mbox{if} \quad m=0\,.
\end{cases}$$
The special structure of the $\eta_m$ immediately admits the following tensorization
\be\label{f09}
   q_k:=\eta_{k_1}\otimes\ldots\otimes\eta_{k_d}\quad,\quad k\in \N_0^d\,.
\ee
Finally, for a given finite $\Delta\subset\N^d_0$ we define the general sampling operator $Q_{\Delta}$ as
\be\label{f06} 
  Q_{\Delta} := \sum_{k\in \Delta}q_k.
\ee
Our degree of freedom will be the set $\Delta$. We will choose $\Delta$ according to the different situations we are dealing with. 
That means in particular that
$\Delta$ may depend on the parameters of the function classes of interest. The most interesting case is represented by the index set 
\be\label{f08}
    \Delta(\xi) = \Delta(\alpha,\beta,\gamma;\xi):=\{k\in \N_0^d:\alpha|k|_1-(\gamma-\beta)|k|_{\infty} \leq \xi\}\quad,\quad \xi>0\,,
\ee
\begin{figure}[H]
\centering
\begin{tikzpicture}[scale=0.16]
    \begin{scope}[thick,font=\scriptsize]
    \draw [->] (0,0) -- (22,0) node [below] {$k_1$};
    \draw [->] (0,0) -- (0,22) node [above] {$k_2$};

   
    \end{scope}
\node at (0,0) {$\bullet$};
\node at (0,1) {$\bullet$};
\node at (0,2) {$\bullet$};
\node at (0,3) {$\bullet$};
\node at (0,4) {$\bullet$};
\node at (0,5) {$\bullet$};
\node at (0,6) {$\bullet$};
\node at (0,7) {$\bullet$};
\node at (0,8) {$\bullet$};
\node at (0,9) {$\bullet$};
\node at (0,10) {$\bullet$};
\node at (0,11) {$\bullet$};
\node at (0,12) {$\bullet$};
\node at (0,13) {$\bullet$};
\node at (0,14) {$\bullet$};
\node at (0,15) {$\bullet$};
\node at (0,16) {$\bullet$};
\node at (0,17) {$\bullet$};
\node at (0,18) {$\bullet$};
\node at (0,19) {$\bullet$};
\node at (0,20) {$\bullet$};
\node at (1,0) {$\bullet$};
\node at (1,1) {$\bullet$};
\node at (1,2) {$\bullet$};
\node at (1,3) {$\bullet$};
\node at (1,4) {$\bullet$};
\node at (1,5) {$\bullet$};
\node at (1,6) {$\bullet$};
\node at (1,7) {$\bullet$};
\node at (1,8) {$\bullet$};
\node at (1,9) {$\bullet$};
\node at (1,10) {$\bullet$};
\node at (1,11) {$\bullet$};
\node at (1,12) {$\bullet$};
\node at (1,13) {$\bullet$};
\node at (1,14) {$\bullet$};
\node at (1,15) {$\bullet$};
\node at (1,16) {$\bullet$};
\node at (1,17) {$\bullet$};
\node at (1,18) {$\bullet$};
\node at (2,0) {$\bullet$};
\node at (2,1) {$\bullet$};
\node at (2,2) {$\bullet$};
\node at (2,3) {$\bullet$};
\node at (2,4) {$\bullet$};
\node at (2,5) {$\bullet$};
\node at (2,6) {$\bullet$};
\node at (2,7) {$\bullet$};
\node at (2,8) {$\bullet$};
\node at (2,9) {$\bullet$};
\node at (2,10) {$\bullet$};
\node at (2,11) {$\bullet$};
\node at (2,12) {$\bullet$};
\node at (2,13) {$\bullet$};
\node at (2,14) {$\bullet$};
\node at (2,15) {$\bullet$};
\node at (2,16) {$\bullet$};
\node at (3,0) {$\bullet$};
\node at (3,1) {$\bullet$};
\node at (3,2) {$\bullet$};
\node at (3,3) {$\bullet$};
\node at (3,4) {$\bullet$};
\node at (3,5) {$\bullet$};
\node at (3,6) {$\bullet$};
\node at (3,7) {$\bullet$};
\node at (3,8) {$\bullet$};
\node at (3,9) {$\bullet$};
\node at (3,10) {$\bullet$};
\node at (3,11) {$\bullet$};
\node at (3,12) {$\bullet$};
\node at (3,13) {$\bullet$};
\node at (3,14) {$\bullet$};
\node at (4,0) {$\bullet$};
\node at (4,1) {$\bullet$};
\node at (4,2) {$\bullet$};
\node at (4,3) {$\bullet$};
\node at (4,4) {$\bullet$};
\node at (4,5) {$\bullet$};
\node at (4,6) {$\bullet$};
\node at (4,7) {$\bullet$};
\node at (4,8) {$\bullet$};
\node at (4,9) {$\bullet$};
\node at (4,10) {$\bullet$};
\node at (4,11) {$\bullet$};
\node at (4,12) {$\bullet$};
\node at (5,0) {$\bullet$};
\node at (5,1) {$\bullet$};
\node at (5,2) {$\bullet$};
\node at (5,3) {$\bullet$};
\node at (5,4) {$\bullet$};
\node at (5,5) {$\bullet$};
\node at (5,6) {$\bullet$};
\node at (5,7) {$\bullet$};
\node at (5,8) {$\bullet$};
\node at (5,9) {$\bullet$};
\node at (5,10) {$\bullet$};
\node at (6,0) {$\bullet$};
\node at (6,1) {$\bullet$};
\node at (6,2) {$\bullet$};
\node at (6,3) {$\bullet$};
\node at (6,4) {$\bullet$};
\node at (6,5) {$\bullet$};
\node at (6,6) {$\bullet$};
\node at (6,7) {$\bullet$};
\node at (6,8) {$\bullet$};
\node at (7,0) {$\bullet$};
\node at (7,1) {$\bullet$};
\node at (7,2) {$\bullet$};
\node at (7,3) {$\bullet$};
\node at (7,4) {$\bullet$};
\node at (7,4) {$\bullet$};
\node at (7,5) {$\bullet$};
\node at (7,6) {$\bullet$};
\node at (8,0) {$\bullet$};
\node at (8,1) {$\bullet$};
\node at (8,2) {$\bullet$};
\node at (8,3) {$\bullet$};
\node at (8,4) {$\bullet$};
\node at (8,5) {$\bullet$};
\node at (8,6) {$\bullet$};
\node at (9,0) {$\bullet$};
\node at (9,1) {$\bullet$};
\node at (9,2) {$\bullet$};
\node at (9,3) {$\bullet$};
\node at (9,4) {$\bullet$};
\node at (9,5) {$\bullet$};
\node at (10,0) {$\bullet$};
\node at (10,1) {$\bullet$};
\node at (10,2) {$\bullet$};
\node at (10,3) {$\bullet$};
\node at (10,4) {$\bullet$};
\node at (10,5) {$\bullet$};
\node at (11,0) {$\bullet$};
\node at (11,1) {$\bullet$};
\node at (11,2) {$\bullet$};
\node at (11,3) {$\bullet$};
\node at (11,4) {$\bullet$};
\node at (12,0) {$\bullet$};
\node at (12,1) {$\bullet$};
\node at (12,2) {$\bullet$};
\node at (12,3) {$\bullet$};
\node at (12,4) {$\bullet$};
\node at (13,0) {$\bullet$};
\node at (13,1) {$\bullet$};
\node at (13,2) {$\bullet$};
\node at (13,3) {$\bullet$};
\node at (14,0) {$\bullet$};
\node at (14,1) {$\bullet$};
\node at (14,2) {$\bullet$};
\node at (14,3) {$\bullet$};
\node at (15,0) {$\bullet$};
\node at (15,1) {$\bullet$};
\node at (15,2) {$\bullet$};
\node at (16,0) {$\bullet$};
\node at (16,1) {$\bullet$};
\node at (16,2) {$\bullet$};
\node at (17,0) {$\bullet$};
\node at (17,1) {$\bullet$};
\node at (18,0) {$\bullet$};
\node at (18,1) {$\bullet$};
\node at (19,0) {$\bullet$};
\node at (20,0) {$\bullet$};
 
\end{tikzpicture}
\caption{$d=2,\;\alpha=2,\;\beta=0,\;\gamma=1,\;\xi=20$}
\label{fig_Squares}
\end{figure}
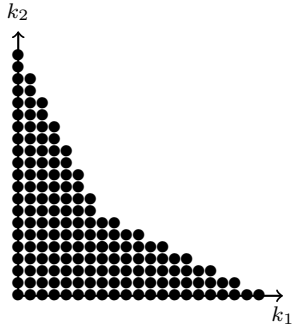
or more exactly, by an $\varepsilon$-modification of it 
given by
\be\label{f082}
\Delta_\varepsilon(\xi) = \Delta(\varepsilon,\alpha,\beta,\gamma;\xi):=\{k\in \N_0^d:\, (\alpha-\varepsilon)\, |k|_1-
(\gamma-\beta-\varepsilon)|k|_{\infty} \leq \xi\}\quad,\quad \xi>0\,,
\ee 
and $\varepsilon>0$ chosen sufficiently small (but not close to zero). These index sets will be used in connection with the embedding \eqref{f07}. 
The set of sampling points used by \eqref{f06} will be called ``energy-norm based sparse grid''. This phrase 
stems from the works of Bungartz, Griebel and Knapek \cite{BG99,BG04,Gr,GN00,GN09} and refers to the special case where the error is measured 
in the ``energy space'' $H^1(\tor^d)$. These authors were the first observing the potential 
of this modification of the classical ``sparse grid''. 
Here we use the phrase ``energy-norm based grids'' in the wider sense of being 
adapted to  the  smoothness parameter $\gamma$ of the target space 
$H^\gamma (\tor^d)$ (with $\alpha$ considered to be fixed). These extensions with respect to 
approximation numbers as well as to sampling numbers have been discussed in  
\cite{Di13} (non-periodic case) and \cite{DiUl13} (periodic case). In particular, \eqref{f08} in case 
$\gamma \neq 1$ goes back to \cite{DiUl13}, and \eqref{f082} in the case $\gamma>0$ to \cite{Di13}.

The second important example is given by the index set
\be\label{f012b}
    \Delta(\xi) = \Delta(\alpha;\xi):=\{k\in \N_0^d:\alpha|k|_1 \leq \xi\}\quad,\quad \xi>0\,,
\ee
\begin{figure}[H]
\centering
\begin{tikzpicture}[scale=0.16]
    \begin{scope}[thick,font=\scriptsize]
    \draw [->] (0,0) -- (22,0) node [below] {$k_1$};
    \draw [->] (0,0) -- (0,22) node [above] {$k_2$};

   
    \end{scope}
\node at (0,0) {$\bullet$};
\node at (0,1) {$\bullet$};
\node at (0,2) {$\bullet$};
\node at (0,3) {$\bullet$};
\node at (0,4) {$\bullet$};
\node at (0,5) {$\bullet$};
\node at (0,6) {$\bullet$};
\node at (0,7) {$\bullet$};
\node at (0,8) {$\bullet$};
\node at (0,9) {$\bullet$};
\node at (0,10) {$\bullet$};
\node at (0,11) {$\bullet$};
\node at (0,12) {$\bullet$};
\node at (0,13) {$\bullet$};
\node at (0,14) {$\bullet$};
\node at (0,15) {$\bullet$};
\node at (0,16) {$\bullet$};
\node at (0,17) {$\bullet$};
\node at (0,18) {$\bullet$};
\node at (0,19) {$\bullet$};
\node at (0,20) {$\bullet$};
\node at (1,0) {$\bullet$};
\node at (1,1) {$\bullet$};
\node at (1,2) {$\bullet$};
\node at (1,3) {$\bullet$};
\node at (1,4) {$\bullet$};
\node at (1,5) {$\bullet$};
\node at (1,6) {$\bullet$};
\node at (1,7) {$\bullet$};
\node at (1,8) {$\bullet$};
\node at (1,9) {$\bullet$};
\node at (1,10) {$\bullet$};
\node at (1,11) {$\bullet$};
\node at (1,12) {$\bullet$};
\node at (1,13) {$\bullet$};
\node at (1,14) {$\bullet$};
\node at (1,15) {$\bullet$};
\node at (1,16) {$\bullet$};
\node at (1,17) {$\bullet$};
\node at (1,18) {$\bullet$};
\node at (1,19) {$\bullet$};
\node at (2,0) {$\bullet$};
\node at (2,1) {$\bullet$};
\node at (2,2) {$\bullet$};
\node at (2,3) {$\bullet$};
\node at (2,4) {$\bullet$};
\node at (2,5) {$\bullet$};
\node at (2,6) {$\bullet$};
\node at (2,7) {$\bullet$};
\node at (2,8) {$\bullet$};
\node at (2,9) {$\bullet$};
\node at (2,10) {$\bullet$};
\node at (2,11) {$\bullet$};
\node at (2,12) {$\bullet$};
\node at (2,13) {$\bullet$};
\node at (2,14) {$\bullet$};
\node at (2,15) {$\bullet$};
\node at (2,16) {$\bullet$};
\node at (2,17) {$\bullet$};
\node at (2,18) {$\bullet$};
\node at (3,0) {$\bullet$};
\node at (3,1) {$\bullet$};
\node at (3,2) {$\bullet$};
\node at (3,3) {$\bullet$};
\node at (3,4) {$\bullet$};
\node at (3,5) {$\bullet$};
\node at (3,6) {$\bullet$};
\node at (3,7) {$\bullet$};
\node at (3,8) {$\bullet$};
\node at (3,9) {$\bullet$};
\node at (3,10) {$\bullet$};
\node at (3,11) {$\bullet$};
\node at (3,12) {$\bullet$};
\node at (3,13) {$\bullet$};
\node at (3,14) {$\bullet$};
\node at (3,15) {$\bullet$};
\node at (3,16) {$\bullet$};
\node at (3,17) {$\bullet$};
\node at (4,0) {$\bullet$};
\node at (4,1) {$\bullet$};
\node at (4,2) {$\bullet$};
\node at (4,3) {$\bullet$};
\node at (4,4) {$\bullet$};
\node at (4,5) {$\bullet$};
\node at (4,6) {$\bullet$};
\node at (4,7) {$\bullet$};
\node at (4,8) {$\bullet$};
\node at (4,9) {$\bullet$};
\node at (4,10) {$\bullet$};
\node at (4,11) {$\bullet$};
\node at (4,12) {$\bullet$};
\node at (4,13) {$\bullet$};
\node at (4,14) {$\bullet$};
\node at (4,15) {$\bullet$};
\node at (4,16) {$\bullet$};
\node at (5,0) {$\bullet$};
\node at (5,1) {$\bullet$};
\node at (5,2) {$\bullet$};
\node at (5,3) {$\bullet$};
\node at (5,4) {$\bullet$};
\node at (5,5) {$\bullet$};
\node at (5,6) {$\bullet$};
\node at (5,7) {$\bullet$};
\node at (5,8) {$\bullet$};
\node at (5,9) {$\bullet$};
\node at (5,10) {$\bullet$};
\node at (5,11) {$\bullet$};
\node at (5,12) {$\bullet$};
\node at (5,13) {$\bullet$};
\node at (5,14) {$\bullet$};
\node at (5,15) {$\bullet$};
\node at (6,0) {$\bullet$};
\node at (6,1) {$\bullet$};
\node at (6,2) {$\bullet$};
\node at (6,3) {$\bullet$};
\node at (6,4) {$\bullet$};
\node at (6,5) {$\bullet$};
\node at (6,6) {$\bullet$};
\node at (6,7) {$\bullet$};
\node at (6,8) {$\bullet$};
\node at (6,9) {$\bullet$};
\node at (6,10) {$\bullet$};
\node at (6,11) {$\bullet$};
\node at (6,12) {$\bullet$};
\node at (6,13) {$\bullet$};
\node at (6,14) {$\bullet$};
\node at (7,0) {$\bullet$};
\node at (7,1) {$\bullet$};
\node at (7,2) {$\bullet$};
\node at (7,3) {$\bullet$};
\node at (7,4) {$\bullet$};
\node at (7,5) {$\bullet$};
\node at (7,6) {$\bullet$};
\node at (7,7) {$\bullet$};
\node at (7,8) {$\bullet$};
\node at (7,9) {$\bullet$};
\node at (7,10) {$\bullet$};
\node at (7,11) {$\bullet$};
\node at (7,12) {$\bullet$};
\node at (7,13) {$\bullet$};
\node at (8,0) {$\bullet$};
\node at (8,1) {$\bullet$};
\node at (8,2) {$\bullet$};
\node at (8,3) {$\bullet$};
\node at (8,4) {$\bullet$};
\node at (8,5) {$\bullet$};
\node at (8,6) {$\bullet$};
\node at (8,7) {$\bullet$};
\node at (8,8) {$\bullet$};
\node at (8,9) {$\bullet$};
\node at (8,10) {$\bullet$};
\node at (8,11) {$\bullet$};
\node at (8,12) {$\bullet$};
\node at (9,0) {$\bullet$};
\node at (9,1) {$\bullet$};
\node at (9,2) {$\bullet$};
\node at (9,3) {$\bullet$};
\node at (9,4) {$\bullet$};
\node at (9,5) {$\bullet$};
\node at (9,6) {$\bullet$};
\node at (9,7) {$\bullet$};
\node at (9,8) {$\bullet$};
\node at (9,9) {$\bullet$};
\node at (9,10) {$\bullet$};
\node at (9,11) {$\bullet$};
\node at (10,0) {$\bullet$};
\node at (10,1) {$\bullet$};
\node at (10,2) {$\bullet$};
\node at (10,3) {$\bullet$};
\node at (10,4) {$\bullet$};
\node at (10,5) {$\bullet$};
\node at (10,6) {$\bullet$};
\node at (10,7) {$\bullet$};
\node at (10,8) {$\bullet$};
\node at (10,9) {$\bullet$};
\node at (10,10) {$\bullet$};
\node at (11,0) {$\bullet$};
\node at (11,1) {$\bullet$};
\node at (11,2) {$\bullet$};
\node at (11,3) {$\bullet$};
\node at (11,4) {$\bullet$};
\node at (11,5) {$\bullet$};
\node at (11,6) {$\bullet$};
\node at (11,7) {$\bullet$};
\node at (11,8) {$\bullet$};
\node at (11,9) {$\bullet$};
\node at (12,0) {$\bullet$};
\node at (12,1) {$\bullet$};
\node at (12,2) {$\bullet$};
\node at (12,3) {$\bullet$};
\node at (12,4) {$\bullet$};
\node at (12,5) {$\bullet$};
\node at (12,6) {$\bullet$};
\node at (12,7) {$\bullet$};
\node at (12,8) {$\bullet$};
\node at (13,0) {$\bullet$};
\node at (13,1) {$\bullet$};
\node at (13,2) {$\bullet$};
\node at (13,3) {$\bullet$};
\node at (13,4) {$\bullet$};
\node at (13,5) {$\bullet$};
\node at (13,6) {$\bullet$};
\node at (13,7) {$\bullet$};
\node at (14,0) {$\bullet$};
\node at (14,1) {$\bullet$};
\node at (14,2) {$\bullet$};
\node at (14,3) {$\bullet$};
\node at (14,4) {$\bullet$};
\node at (14,5) {$\bullet$};
\node at (14,6) {$\bullet$};
\node at (15,0) {$\bullet$};
\node at (15,1) {$\bullet$};
\node at (15,2) {$\bullet$};
\node at (15,3) {$\bullet$};
\node at (15,4) {$\bullet$};
\node at (15,5) {$\bullet$};
\node at (16,0) {$\bullet$};
\node at (16,1) {$\bullet$};
\node at (16,2) {$\bullet$};
\node at (16,3) {$\bullet$};
\node at (16,4) {$\bullet$};
\node at (17,0) {$\bullet$};
\node at (17,1) {$\bullet$};
\node at (17,2) {$\bullet$};
\node at (17,3) {$\bullet$};
\node at (18,0) {$\bullet$};
\node at (18,1) {$\bullet$};
\node at (18,2) {$\bullet$};
\node at (19,0) {$\bullet$};
\node at (19,1) {$\bullet$};
\node at (20,0) {$\bullet$};
 
\end{tikzpicture}
\caption{$d=2,\;\alpha=1,\;\xi=20$}
\label{fig_Squares}
\end{figure}
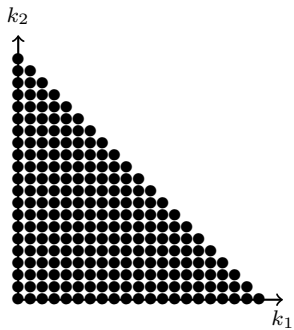
and represents the classical Smolyak algorithm, originally introduced in \cite{Sm}. Although this set represents a special case of \eqref{f08} it has a completely different geometry and leads to structurally different results. The sampling points used by the associated $Q_{\Delta}$ is commonly called ``sparse grid''. 
Putting $\xi = \alpha m$ in \eqref{f012b} it is well-known, see \cite{WW} and \cite{SU,SU1}, that the operator $Q_{\Delta(\xi)}$ 
samples the function $f$ on the grid
\beq
&& \hspace*{-2.7cm} \cg (m):=\Big\{\Big(
\frac{2\pi \ell_1}{2^{j_1+1}+1}, \ldots , \frac{2\pi \ell_d}{2^{j_d+1}+1}\Big): 
\nonumber
\\
&&
\hspace*{2cm} 0 \le \ell_i \leq 2^{j_i}, i=1, \ldots\, , d, \quad m-d+1\le |j|_1 \le m\Big\}\,.
\eeq
It turned out that the previously defined framework fits very well to the function space setting described above. In Lemma \ref{basic1} below we give the 
Littlewood-Paley decomposition of $H^{\alpha,\beta}(\tor^d)$, i.e., 
\[
\hab=\Big\{f\in L_2(\tor^d):\;\fhab^2:=\sum_{k\in\N^d_0}2^{2(\alpha\kone+\beta \kinf)}\deltak^2<\infty\Big\}\,.
\]
As usual, $\delta_k(f)$, $k\in \N_0^d$, represents that part of the Fourier series of $f$ supported in a dyadic block 
\be\label{f014}
    \cp_{k}:= P_{k_1}\times\cdots \times P_{k_d}\,,
\ee
where $P_{j}:=\{\ell\in\mathbb{Z}:2^{j-1}\le |\ell|< 2^{j}\}$ and $P_0 = \{0\}$\,. In fact, looking at the approximation scheme in \eqref{f06} it would be desirable to have an equivalent norm where we replace $\delta_k(f)$ by $q_k(f)$ from \eqref{f09}. Under additional restrictions on the paramaters (one has to at least ensure an embedding in $C(\tor^d)$) this is indeed possible as Theorem \ref{satz:equivnorm} below shows. This gives us convenient characterizations of the function spaces of interest in terms of the sampling operators we are going to analyze. 

The paper is organized as follows.
In Section \ref{spaces} we define and discuss the spaces $\hmixed$ and $\hab$.
Section \ref{sample1} is used to establish our main tool in all proofs involving sampling numbers, the so-called ``sampling representation'', 
see Theorem\ \ref{satz:equivnorm} below. The next Section \ref{sample2} deals in a constructive way with estimates from above for the sampling numbers
of the embedding \eqref{f07} by evaluating the error norm $\|I-Q_{\Delta}\|$ with the corresponding $\Delta$ from \eqref{f082}\,.
With the limiting cases \eqref{f012} leading to the classical Smolyak algorithm we deal in Section \ref{smolyak}. Here we also consider the embeddings \eqref{f010} and \eqref{f011}. In Section \ref{samp} we transfer our approximation results into the notion of sampling numbers and compare them to existing estimates 
for the approximation numbers. The relevant estimates are collected in the appendix.  

{\bf Notation.} As usual, $\N$ denotes the natural numbers, $\N_0$ the non-negative integers,
$\zz$ the integers and
$\re$ the real numbers. With $\tor$ we denote the torus represented by the interval $[0,2\pi]$.
The letter $d$ is always reserved for the dimension in $\Z$, $\R$, $\N^d$, and $\T$.
For $0<p\leq \infty$ and $x\in \R$ we denote $|x|_p = (\sum_{i=1}^d |x_i|^p)^{1/p}$ with the
usual modification for $p=\infty$. We write $e_j$, $j=1,...,d$, for the respective canonical unit vector and 
$\bar{1}:=\sum_{j=1}^d e_j$ in $\R$.  
If $X$ and $Y$ are two Banach spaces, the norm of an operator
$A:~X \to Y$ will be denoted by $\|A:~X\to Y\|$. The symbol $X \hookrightarrow Y$ indicates that there is a
continuous embedding from $X$ into $Y$. The relation $a_n\lesssim b_n$ means that there is a constant $c>0$ independent of the context relevant parameters 
such that $a_n\le c b_n$ for all $n$ belonging to a certain subset of $\N$, often $\N$ itself. We write $a_n \asymp b_n$ if 
$a_n\lesssim b_n$ and $b_n \lesssim a_n$ holds.


\section{Sobolev-type  spaces}\label{spaces}


In this section we recall the definition of the function spaces under consideration here.
They are all of Sobolev-type.
In a first subsection we consider the periodic Sobolev spaces 
$\hmixed$ of dominating mixed fractional order $\alpha>0$.
In the second subsection the more general classes $\hab$ are discussed.


\subsection{Periodic Sobolev spaces of mixed and isotropic smoothness}\label{spacesa}


All results in this paper are stated for function spaces on the $d$-torus $\T$,
which is represented in the Euclidean space $\R$ by the cube
$\tor^d = [0,2\pi]^d$, where opposite faces are identified.
The space $L_2(\T)$ consists of all (equivalence classes of) measurable functions $f$ on $\T$
such that the norm
$$
    \|f\|_2:=\Big(\int_{\T} |f(x)|^2\,dx\Big)^{1/2}
$$
is finite. All information on a function $f\in L_2(\T)$ is encoded
in the sequence $(c_k(f))_k$ of its Fourier coefficients, given by
\[
c_k (f):= \frac{1}{(2\pi)^d} \, \int_{\T} \, f(x)\, e^{-ikx}\, dx\, , \qquad k\in \Z\, .
\]
Indeed, we have Parseval's identity
\be\label{Pars}
    \|f\|_2^2 = (2\pi)^d \, \sum\limits_{k\in \Z} |c_k(f)|^2
\ee
as well as
\[
f(x) = \sum_{k \in \Z} \, c_k (f)\, e^{ikx}
\]
with convergence in $L_2(\T)$.

The mixed Sobolev space $H^{m}_\mix(\T)$ with smoothness vector $m = (m_1,...,m_d)\in \N^d$ is the collection
of all $f\in L_2 (\T)$ such that all
distributional derivatives $D^\gamma  f$ of order $\gamma = (\gamma_1,...,\gamma_d)$ with
$\gamma_j\le m_j$, $j=1,...,d$, belong to $L_2 (\T)$.
We put
\begin{equation}\label{t-1}
\| \, f \,\|_{H^{m}_{\mix} (\T)}^* := \Big(
\sum_{\substack{0\leq \gamma_j\leq m_j\\ j=1,...,d}}
\|D^\gamma  f\|_2^2\Big)^{1/2} \, .
\end{equation}
One can rewrite this definition in terms of Fourier coefficients.
However, it is more convenient to use an equivalent norm like
\be\label{norm3}
\| \, f \, \|_{H^{m}_\mix (\T)}^{\#} := \, \Big[
 \sum_{k \in \Z} \, |c_k (f)|^2 \prod\limits_{j=1}^d\big(1+|k_j|^2\big)^{m_j} \Big]^{1/2}\, .
\ee
For $m\in \N$ we denote with $H^m(\tor^d)$ the space $H^{m\cdot \bar{1}}(\tor^d)$.
Inspired by  \eqref{norm3} we define Sobolev spaces of dominating 
mixed smoothness of fractional order $\alpha$ as follows.

\begin{defi}\label{varnorms}
Let $\alpha>0$. The periodic Sobolev space $H^\alpha_{\mix}(\T)$  of dominating mixed smoothness $\alpha$ 
is the collection of all $f\in L_2(\T)$ such that
\be\label{norm4}
   \| \, f \,\|_{H^\alpha_{\mix} (\T)}^{\#}  :=
\, \Big[
 \sum_{k \in \Z} \, |c_k (f)|^2 \prod\limits_{j=1}^d\big(1+|k_j|^2\big)^{\alpha} \Big]^{1/2} < \infty \, .
\ee
\end{defi}

\begin{rem}
 \rm There is different notation in the literature.
E.g., Temlyakov and others use $MW^\alpha_2 (\tor^d)$ instead of $H^\alpha_{\mix} (\T)$, whereas Amanov, Lizorkin, 
Nikol'skij, Schmeisser and Triebel prefer to use $S^\alpha_2W(\tor^d)$.
\end{rem}

We also need the (isotropic) Sobolev spaces $H^\gamma (\tor^d)$.

\begin{defi}\label{varnorms3}
Let $\gamma \ge 0$. The periodic Sobolev space $H^\gamma(\T)$  of smoothness $\gamma$ 
is the collection of all $f\in L_2(\T)$ such that
\be\label{norm7}
   \| \, f\,  \|_{H^\gamma (\T)}^{\#}  :=
\, \Big[
 \sum_{k \in \Z} \, |c_k (f)|^2 \big(1+|k|_2^2\big)^{\gamma} \Big]^{1/2} < \infty \, .
\ee
\end{defi}

\begin{rem}
 \rm
It is elementary to check
\[
H^{\alpha d} (\T) \hookrightarrow H^\alpha_\mix (\tor^d) \hookrightarrow H^\alpha (\tor^d)\, .
\]
In addition it is known that $H^\gamma (\T) \hookrightarrow C(\tor^d)$ if and only if 
$H^\gamma (\T) \hookrightarrow L_\infty(\tor^d)$ if and only if $\gamma >d/2$, see \cite{WiTr}.
\end{rem}


\subsection{Hybrid type Sobolev spaces}\label{spacesb}


To define the scale $\hab$ we look for subspaces of $\hmixed$ obtained by adding isotropic smoothness.
To make this more transparent we start again with a situation where smoothness can be described 
exclusively in terms of weak derivatives. It is easy to see that isotropic smoothness of order $n\in \N$ can be achieved by
``intersecting'' mixed smoothness conditions, i.e.,
$$
    H^{n}(\tor^d) = H^{(n,0,...,0)}_{\mix}(\tor^d) \cap H^{(0,n,0,..,0)}_{\mix} \cap... \cap H^{(0,0,...,n)}_{\mix}\,.
$$
Let $m \in \N$ and $n\in \zz$ such that $m+n\geq 0$. We will use the above principle to ``add'' an 
isotropic smoothness of order $n$ to the mixed smoothness of order $m$. The hybrid type Sobolev space $H^{m,n}(\T)$ is the set
$$
    H^{m,n}(\tor^d) = \left\{\begin{array}{rcl}
                                \bigcap\limits_{j=1}^d H^{m \cdot \bar{1}+ne_j}_{\mix}(\tor^d)&:&n\geq 0\,,\\
                                \sum\limits_{j=1}^d H^{m \cdot \bar{1}+ne_j}_{\mix}(\tor^d)&:& n<0\,.
                             \end{array}\right.
$$
A function $f \in L_2(\T)$ belongs to $H^{m,n}(\tor^d)$, if and only if the semi-norm
$$
    |f|'_{H^{m,n}(\tor^d)} = \left\{\begin{array}{rcl}
                               \max_{1\le j \le d}  \|f\|_{H^{m \cdot \bar{1}+ne_j}_{\mix}(\tor^d)}&:
&n\geq 0\,,\\[2.0ex]
                                \min_{1\le j \le d}  \|f\|_{H^{m \cdot \bar{1}+ne_j}_{\mix}(\tor^d)}&:& n<0\,,
                             \end{array}\right.
$$
is finite. The norm of $f$ in $H^{m,n}(\T)$ is defined as 
$\|f\|'_{H^{m,n}(\tor^d)}:= \|f\|_2 + |f|'_{H^{m,n}(\tor^d)}$. Hence, one can verify that 
\be\nonumber
   \| \, f \, \|'_{H^{m, n} (\T)}
\ \asymp \ 
\, \Big[
 \sum_{k \in \Z} \, |c_k (f)|^2 \Big(\prod\limits_{j=1}^d\big(1+|k_j|^2\big)^{m}\Big) (1+|k|_2^2)^n
 \Big]^{1/2}\, .
\ee
This motivates the following definition.
\begin{defi}\label{varnorms2}
Let $\alpha\geq 0$ and $\beta \in \re$ such that $\alpha+\beta\geq 0$. The generalized periodic Sobolev space $H^{\alpha,\beta}(\T)$ is the collection of all $f\in L_2(\T)$ such that
\be\label{norm6}
   \| \, f \, \|_{H^{\alpha, \beta} (\T)}^{\#}  :=
\, \Big[
 \sum_{k \in \Z} \, |c_k (f)|^2 \Big(\prod\limits_{j=1}^d\big(1+|k_j|^2\big)^{\alpha}\Big) (1+|k|_2^2)^\beta
 \Big]^{1/2} < \infty \, .
\ee
\end{defi}

\begin{rem}
 \rm
(i) Obviously we have 
$H^{\alpha, 0}_{\mix} (\T)= H^{\alpha}_{\mix} (\T)$ and $H^{0, \beta}_{\mix} (\T)= H^{\beta}(\T)$, $\beta \ge 0$.
More important for us will be the embedding
\be\label{ws-24}
\hab \hookrightarrow H^{\gamma}(\T) \qquad \mbox{if}\qquad 0 \leq \gamma \le \alpha + \beta\, .
\ee
(ii) Spaces of such a type have been first considered by Griebel and Knapek \cite{GN00}.
Also in the non-periodic context they play a role in the description of the fine regularity properties of certain eigenfunctions 
of Hamilton operators in quantum chemistry, see \cite{Ys10}.
The periodic spaces $H^{\alpha, \beta}_{\mix} (\T)$ also occur in the recent works \cite{DiUl13} and \cite{GH14}.
\end{rem}
A first step towards the sampling representation in Theorem \ref{satz:equivnorm} below will be the following equivalent characterization of Littlewood-Paley type. 
We will work with the dyadic blocks from \eqref{f014} and put for $\ell \in \N_0^d$
\[
\delta_{\ell}(f):=\sum_{k \in \cp_{\ell}} \, c_k(f)\, e^{ikx}.
\]
Hence, for all $f \in L_2 (\tor^d)$ we have the Littlewood-Paley decomposition
\be\label{ws-18}
f = \sum_{\ell \in \N_0^d} \delta_{\ell}(f)\,.
\ee
The following lemma is an elementary  consequence of Definition \ref{varnorms2}.

\begin{lem}\label{basic1}
Let $\alpha \geq 0$ and $\beta\in\re$ such that $\alpha+\beta\geq 0$. \\
{\em (i)} Then 
\[
\hab=\Big\{f\in L_2(\tor^d):\;\|f\|_{H^{\alpha,\beta}(\tor^d)}:=\Big(\sum_{k\in\N^d_0}2^{2(\alpha\kone+\beta \kinf)}\deltak^2\Big)^{1/2}<\infty\Big\}
\]
in the sense of equivalent norms.\\
{\em (ii)} We have
$$
    H^{\alpha,\beta}(\tor^d) = \left\{\begin{array}{rcl}
                                \bigcap\limits_{j=1}^d H^{\alpha \cdot \bar{1}+\beta e_j}_{\mix}(\tor^d)&:&\beta\geq 0\,,\\
                                \sum\limits_{j=1}^d H^{\alpha \cdot \bar{1}+\beta_j}_{\mix}(\tor^d)&:& \beta<0\,.
                             \end{array}\right.
$$
\end{lem}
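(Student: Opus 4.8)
The plan is to reduce both assertions to elementary two-sided comparisons of the Fourier weight in \eqref{norm6} with simpler dyadic weights, using that the blocks $\cp_\ell$, $\ell\in\N_0^d$, form a partition of $\Z$. Indeed, since $\zz=\bigsqcup_{j\ge 0}P_j$ one has $\Z=\bigsqcup_{\ell\in\N_0^d}\cp_\ell$, and by Parseval \eqref{Pars} $\|\delta_\ell(f)\|_2^2=(2\pi)^d\sum_{k\in\cp_\ell}|c_k(f)|^2$. For (i) it therefore suffices to show that on a fixed block $\cp_\ell$ the weight $\prod_{j=1}^d(1+|k_j|^2)^\alpha(1+|k|_2^2)^\beta$ is comparable, with constants depending only on $d,\alpha,\beta$, to the single number $2^{2(\alpha|\ell|_1+\beta|\ell|_\infty)}$. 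First I would treat the product factor coordinatewise: for $k\in\cp_\ell$ one has $1+|k_j|^2\asymp 2^{2\ell_j}$ (with the convention $P_0=\{0\}$ giving $1+0=2^0$), hence $\prod_j(1+|k_j|^2)^\alpha\asymp 2^{2\alpha|\ell|_1}$. For the isotropic factor I would use $|k|_\infty=\max_j|k_j|\asymp 2^{|\ell|_\infty}$ on $\cp_\ell$ together with $|k|_\infty\le|k|_2\le\sqrt d\,|k|_\infty$, so that $1+|k|_2^2\asymp 2^{2|\ell|_\infty}$ and thus $(1+|k|_2^2)^\beta\asymp 2^{2\beta|\ell|_\infty}$; the exponent $\beta$ may be negative, but raising comparable positive quantities to a fixed real power preserves comparability. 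Summing over $k\in\cp_\ell$ and then over $\ell$ gives $\|f\|^{\#}_{H^{\alpha,\beta}(\tor^d)}\asymp\fhab$, which is (i).

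For (ii) with $\beta\ge 0$, I would norm the intersection by $\max_{1\le j\le d}\|f\|_{H^{\alpha\bar{1}+\beta e_j}_{\mix}(\tor^d)}$. By the Fourier description underlying \eqref{norm3} (in its fractional vector form), the weight of $H^{\alpha\bar{1}+\beta e_j}_{\mix}$ is $\prod_i(1+|k_i|^2)^\alpha(1+|k_j|^2)^\beta$, so the intersection weight is $\prod_i(1+|k_i|^2)^\alpha\max_j(1+|k_j|^2)^\beta=\prod_i(1+|k_i|^2)^\alpha(1+|k|_\infty^2)^\beta$. The comparison $(1+|k|_\infty^2)^\beta\asymp(1+|k|_2^2)^\beta$ (again via $|k|_\infty\le|k|_2\le\sqrt d\,|k|_\infty$) then identifies this with the weight in \eqref{norm6}, proving the claimed identity of spaces with equivalent norms.

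The case $\beta<0$, the sum space, is where I expect the main work. Here $f\in\sum_j H^{\alpha\bar{1}+\beta e_j}_{\mix}$ carries the infimal norm $\inf\{(\sum_j\|f_j\|^2)^{1/2}:f=\sum_j f_j\}$, and I would compute it Fourier-mode-wise: minimizing $\sum_j w_j(k)|a_j|^2$ under $\sum_j a_j=c_k(f)$ gives, by the Cauchy--Schwarz inequality with its equality case, the value $|c_k(f)|^2(\sum_j w_j(k)^{-1})^{-1}$, where $w_j(k)=\prod_i(1+|k_i|^2)^\alpha(1+|k_j|^2)^\beta$. Since $-\beta>0$, one has $\sum_j w_j(k)^{-1}=\prod_i(1+|k_i|^2)^{-\alpha}\sum_j(1+|k_j|^2)^{-\beta}\asymp\prod_i(1+|k_i|^2)^{-\alpha}(1+|k|_\infty^2)^{-\beta}$, so the effective weight of the sum space is $\asymp\prod_i(1+|k_i|^2)^\alpha(1+|k|_\infty^2)^\beta\asymp\prod_i(1+|k_i|^2)^\alpha(1+|k|_2^2)^\beta$, once more the weight in \eqref{norm6}. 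The only genuinely delicate points are that this mode-wise optimization assembles into a Hilbert space isomorphic (with equivalent norms) to the sum and that the optimal decomposition lands in the correct summands; both follow from the diagonal, Fourier-multiplier structure shared by all the norms involved, which also makes the identification in \eqref{ws-24} transparent.
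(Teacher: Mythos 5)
Your argument is correct and is exactly the elementary block-wise weight comparison that the paper has in mind when it states Lemma \ref{basic1} without proof as ``an elementary consequence of Definition \ref{varnorms2}'': for (i) you compare $\prod_j(1+|k_j|^2)^\alpha(1+|k|_2^2)^\beta$ on each $\cp_\ell$ with $2^{2(\alpha|\ell|_1+\beta|\ell|_\infty)}$ and sum via Parseval, and for (ii) you identify the intersection and sum weights with $\prod_i(1+|k_i|^2)^\alpha(1+|k|_\infty^2)^\beta$, the mode-wise Cauchy--Schwarz optimization for the sum space being the standard and correct way to handle $\beta<0$. The only point worth a half-sentence more care is that $\max_j\sum_k w_j(k)|c_k|^2$ and $\sum_k\max_j w_j(k)|c_k|^2$ differ, but they are comparable within a factor $d$, so the norm equivalence in the $\beta\ge 0$ case is unaffected.
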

\noindent We need a few more properties of these spaces. For $\ell\in\N^d_0$ we define the set of trigonometric polynomials
$$\trigpol:= \Big\{\sum_{\substack{|k_i|\le 2^{\ell_i}\\i=1,\cdots,d}} a_k \, e^{ikx}:a_k \in \C\Big\}\, .$$
Of course, $\delta_\ell (f) \in \trigpol$ for all $f \in L_2 (\tor^d)$.

\begin{lem}[Nikol'skij's inequality]\label{lem:bernsteinnikolskij}
Let $0<p\leq q \leq \infty$. Then there is a constant $C = C(p,q)>0$ (independent of $g$ and $\ell$) sucht that
\beqq 
\|g\|_q\leq C 2^{\lone(\frac{1}{p}-\frac{1}{q})}\|g\|_p
\eeqq
holds for every $g\in\trigpol$ and every $\ell\in\N_0^d$.
\end{lem}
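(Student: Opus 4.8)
The plan is to reduce the whole statement to a single $L_p\to L_\infty$ estimate for trigonometric polynomials and then recover the full range $p\le q$ by a convexity (interpolation) argument. The key device is a $d$-dimensional de la Vallée Poussin kernel. For $N\in\N$ let $V_N$ denote the univariate kernel whose Fourier coefficients equal $1$ for $|k|\le N$ and taper linearly to $0$ on $N\le|k|\le 2N$; it is classical that $\|V_N\|_{L_s(\tor)}\ls N^{1-1/s}$ for $1\le s\le\infty$ (in particular $\|V_N\|_1\ls 1$ and $\|V_N\|_\infty\ls N$). Setting $N_i:=2^{\ell_i}$ and $\mathcal{V}_{\ell}:=V_{N_1}\otimes\cdots\otimes V_{N_d}$, the tensor structure gives $\|\mathcal{V}_{\ell}\|_{L_s(\tor^d)}=\prod_{i=1}^d\|V_{N_i}\|_{L_s(\tor)}\ls 2^{\lone(1-1/s)}$, and since $\widehat{\mathcal{V}_{\ell}}(k)=1$ on the box $\{|k_i|\le 2^{\ell_i}\}$, convolution reproduces every $g\in\trigpol$, i.e. $g=g*\mathcal{V}_{\ell}$.

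First I would establish $\|g\|_\infty\le C\,2^{\lone/p}\|g\|_p$ for all $0<p<\infty$. For $p\ge1$ this is immediate from $g=g*\mathcal{V}_{\ell}$ and Young's inequality: with $1/p+1/p'=1$ one gets $\|g\|_\infty\le\|g\|_p\,\|\mathcal{V}_{\ell}\|_{p'}\ls 2^{\lone(1-1/p')}\|g\|_p=2^{\lone/p}\|g\|_p$. The case $0<p<1$ is the main obstacle, since Young's inequality is no longer available there. Here I would use a self-improving pointwise bound: from $g=g*\mathcal{V}_{\ell}$ together with $|g(y)|^{1-p}\le\|g\|_\infty^{1-p}$ (valid because $1-p\ge0$),
\[
|g(x)|\le\int_{\tor^d}|g(y)|\,|\mathcal{V}_{\ell}(x-y)|\,dy\le\|g\|_\infty^{1-p}\,\|\mathcal{V}_{\ell}\|_\infty\int_{\tor^d}|g(y)|^p\,dy .
\]
Taking the supremum over $x$ and using $\|\mathcal{V}_{\ell}\|_\infty\ls 2^{\lone}$ yields $\|g\|_\infty\le C\,\|g\|_\infty^{1-p}\,2^{\lone}\|g\|_p^p$; dividing by the finite factor $\|g\|_\infty^{1-p}$ (for $g\ne0$; recall $g$ is a polynomial, hence bounded) and taking $p$-th roots gives again $\|g\|_\infty\ls 2^{\lone/p}\|g\|_p$.

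Finally, for $p\le q<\infty$ I would interpolate between $L_p$ and $L_\infty$: since $\int_{\tor^d}|g|^q=\int_{\tor^d}|g|^{q-p}|g|^p\le\|g\|_\infty^{q-p}\|g\|_p^p$, inserting the estimate just proved gives $\|g\|_q^q\ls 2^{\lone(q-p)/p}\|g\|_p^q$, that is $\|g\|_q\ls 2^{\lone(1/p-1/q)}\|g\|_p$, while the case $q=\infty$ is exactly the auxiliary estimate. The resulting constant depends only on $p$ and $q$ (through the kernel bounds and the fixed exponents) and is independent of $g$ and $\ell$, which is precisely the assertion. The only genuinely delicate point in this scheme is the sub-linear range $0<p<1$, handled by the absorption trick above; the rest is bookkeeping with the reproducing kernel and its $L_s$-norms.
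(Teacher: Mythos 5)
Your proof is correct. Note, however, that the paper does not actually prove this lemma: it simply cites \cite[Theorem~3.3.2]{ST87}, so there is no internal argument to compare against. What you have written is the standard self-contained proof of the multivariate Nikol'skij inequality: reproduce $g\in\trigpol$ by convolution with a tensor-product de la Vall\'ee Poussin kernel, deduce the $L_p\to L_\infty$ bound (via Young for $p\ge 1$, via the absorption trick $|g|\le |g|^p\,\|g\|_\infty^{1-p}$ for $0<p<1$, which is exactly the right device since Young fails there), and then pass to general $q$ by the elementary interpolation $\|g\|_q^q\le\|g\|_\infty^{q-p}\|g\|_p^p$. All three steps are sound: the kernel bounds $\|V_N\|_s\ls N^{1-1/s}$ factor over the tensor product, the convolution identity holds for any trigonometric polynomial regardless of the value of $p$, and the division by $\|g\|_\infty^{1-p}$ is legitimate because a nonzero trigonometric polynomial has finite, positive sup-norm. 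The only cosmetic remark is that the constant also depends on $d$ (through $\|\mathcal{V}_\ell\|_s=\prod_i\|V_{N_i}\|_s$), which is implicit in the paper's formulation as well. Your argument is arguably more informative than the reference the authors give, since the cited result in \cite{ST87} is stated in the broader framework of Fourier multipliers for band-limited distributions, whereas for polynomials the kernel argument is entirely elementary.
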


\begin{proof}
A proof can be found in  \cite[Theorem~3.3.2]{ST87}.
\end{proof}

To give a meaning to point evaluations of functions it is essential that the spaces under consideration contain only continuous functions. To be more 
precise, they contain equivalence classes of functions having one continuous representative.

\begin{satz}\label{einbettung}
Let $\alpha>0$, $\beta\in \re$ such that $\min\{\alpha+\beta,\alpha+\frac{\beta}{d}\}>\frac{1}{2}$. Then
$$\hab\hookrightarrow C(\tor^d).$$
\end{satz}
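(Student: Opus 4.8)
The plan is to show that the Littlewood--Paley series $\sum_{k\in\N_0^d}\delta_k(f)$ converges absolutely in $C(\T)$ for every $f\in\hab$. Since each block $\delta_k(f)$ is a trigonometric polynomial, hence continuous, and since the uniform limit of continuous functions is continuous, such convergence produces a continuous representative of $f$; the accompanying norm estimate then delivers the continuity of the embedding. The starting point is the equivalent norm from Lemma \ref{basic1}(i), namely $\fhab\asymp\big(\sum_{k\in\N_0^d}2^{2(\alpha\kone+\beta\kinf)}\deltak^2\big)^{1/2}$, together with the inclusion $\delta_k(f)\in\trigpolk$.

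First I would pass from the $L_2$-norm to the $L_\infty$-norm on each dyadic block. Nikol'skij's inequality (Lemma \ref{lem:bernsteinnikolskij}) with $p=2$, $q=\infty$ gives $\|\delta_k(f)\|_\infty\ls 2^{\kone/2}\deltak$. Summing over $k$, inserting the weights $2^{\alpha\kone+\beta\kinf}$ and applying Cauchy--Schwarz in $\ell_2(\N_0^d)$ yields
\be\nonumber
\sum_{k\in\N_0^d}\|\delta_k(f)\|_\infty\ls\Big(\sum_{k\in\N_0^d}2^{(1-2\alpha)\kone-2\beta\kinf}\Big)^{1/2}\,\fhab.
\ee
Everything therefore reduces to the finiteness of the purely geometric sum $S:=\sum_{k\in\N_0^d}2^{(1-2\alpha)\kone-2\beta\kinf}$, and I would show that the hypothesis $\min\{\alpha+\beta,\alpha+\beta/d\}>1/2$ is exactly what guarantees $S<\infty$.

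The only genuine work is a case distinction according to the sign of $\beta$, exploiting the two-sided comparison $\kone/d\le\kinf\le\kone$. If $\beta\ge0$, then $-2\beta\kinf\le-2\beta\kone/d$, so each summand is dominated by $2^{-(2\alpha-1+2\beta/d)\kone}$, and $\sum_{k\in\N_0^d}2^{-c\kone}=(1-2^{-c})^{-d}<\infty$ precisely when $c>0$, i.e. when $\alpha+\beta/d>1/2$; since $\beta\ge0$ forces $\alpha+\beta/d\le\alpha+\beta$, this is the active condition $\min\{\alpha+\beta,\alpha+\beta/d\}=\alpha+\beta/d>1/2$. If $\beta<0$, then $-2\beta\kinf\le-2\beta\kone$, so each summand is dominated by $2^{-(2(\alpha+\beta)-1)\kone}$, giving convergence iff $\alpha+\beta>1/2$; and here $\beta<0$ forces $\alpha+\beta<\alpha+\beta/d$, so again $\min\{\alpha+\beta,\alpha+\beta/d\}=\alpha+\beta>1/2$ is exactly the requirement.

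In both regimes $S<\infty$, hence $\sum_k\|\delta_k(f)\|_\infty\ls\fhab<\infty$. Consequently the series converges uniformly to a continuous function which, by the Littlewood--Paley decomposition \eqref{ws-18}, agrees with $f$ in $L_2(\T)$ and therefore almost everywhere; the displayed estimate then gives $\|f\|_\infty\ls\fhab$, which is the asserted continuous embedding $\hab\hookrightarrow C(\T)$. I expect the main (and essentially only) obstacle to be the bookkeeping in this case distinction: one must recognize that the coupling of $\kone$ and $\kinf$ in $S$ produces precisely the two competing quantities $\alpha+\beta$ and $\alpha+\beta/d$, so that neither $\kinf\le\kone$ nor $\kinf\ge\kone/d$ alone suffices across all sign regimes of $\beta$, and that the two estimates together reproduce exactly the minimum in the hypothesis.
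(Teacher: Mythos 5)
Your proposal is correct and follows essentially the same route as the paper's proof: Nikol'skij's inequality on each dyadic block, Cauchy--Schwarz against the weighted $\ell_2$ norm, and the case distinction on the sign of $\beta$ via $\kinf\le\kone\le d\kinf$ to reduce the geometric sum to a convergent $\sum_k 2^{-c\kone}$. Your remark identifying which branch of the minimum is active in each sign regime is a small clarifying addition, but the argument is the same.
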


\begin{proof}
Applying Lemma \ref{lem:bernsteinnikolskij} yields
\beqq
\sum_{k\in\N_0^d}\|\delta_k(f)\|_{\infty}&=& \sum_{k\in\N_0^d}2^{\alpha \kone+\beta\kinf}2^{-(\alpha\kone+\beta\kinf)}\|\delta_k(f)\|_{\infty}\\
&\lesssim& \sum_{k\in\N_0^d}2^{\alpha \kone+\beta\kinf}2^{-(\alpha\kone+\beta\kinf)}2^{\frac{\kone}{2}}\|\delta_k(f)\|_{2}\, .  
\eeqq
Employing H\"older's inequality we find
\beqq
\sum_{k\in\N_0^d}\|\delta_k(f)\|_{\infty}&\leq& \Big(\sum_{k\in\N_0^d}2^{-2(\alpha\kone+\beta\kinf)}2^{\kone}\Big)^{\frac{1}{2}} \Big(\sum_{k\in\N_0^d}2^{2(\alpha \kone+\beta\kinf)}\|\delta_k(f)\|_{2}\Big)^{\frac{1}{2}}\\
&\leq&\Big(\sum_{k\in\N_0^d}2^{-2(\alpha\kone+\beta\kinf)}2^{\kone}\Big)^{\frac{1}{2}} \|f\|_{\hab}.
\eeqq
Using $\kinf \leq \kone \leq d\kinf$ gives in case
$\beta\geq 0$
\beqq
\sum_{k\in\N_0^d}2^{-2(\alpha\kone+\beta\kinf)}2^{\kone}&\leq& \sum_{k\in\N_0^d}2^{-2(\alpha+\frac{\beta}{d}-\frac{1}{2})\kone}<\infty\, , 
\eeqq
whenever $\alpha+\frac{\beta}{d}>\frac{1}{2}$. For the case $\beta <0$ observe that
\beqq
\sum_{k\in\N_0^d}2^{-2(\alpha\kone+\beta\kinf)}2^{\kone}&\leq& \sum_{k\in\N_0^d}2^{-2(\alpha+\beta-\frac{1}{2})\kone}<\infty
\eeqq
if $\alpha+\beta>\frac{1}{2}$. Since $C(\tor^d)$ is a Banach space, the sum $\sum_{k\in\N_0^d}\delta_k(f)$ belongs to $C(\tor^d)$ due to its absolute 
convergence. Further
$$f=\sum_{k\in\N_0^d}\delta_k(f)$$ holds in $L_2(\tor^d)$. Consequently, the equivalence class $f \in \hab$ has 
a continous representative.
\end{proof}

\begin{rem}\label{optimal} \rm {(i)} With essentially the same proof technique as above the assertion in Theorem \ref{einbettung} can be refined as follows. 
Let $\alpha\geq 0$ and $\beta \in \re$ such that $\alpha+\beta \geq 0$. Then it holds the embedding
$$
    H^{\alpha,\beta}(\tor^d) \hookrightarrow \left\{\begin{array}{rcl}
                                                  H^{\alpha+\beta/d}_{\mix}(\tor^d)&:&\beta\geq 0,\\
                                                  H^{\alpha+\beta}_{\mix}(\tor^d)&:&\beta<0\,.
                                             \end{array}\right.
$$
This embedding immediately implies Theorem \ref{einbettung}\,.\\
{(ii)} The restrictions in Theorem \ref{einbettung} are almost optimal. Indeed, let $g \in H^{\alpha + \beta} (\tor)$, then the function
\[
f(x_1, \ldots \, , x_d):= g(x_1) \, , \qquad x \in \R\, , 
\]
belongs to $\hab$. Hence, from $\hab \hookrightarrow C(\tor^d)$ we derive $H^{\alpha + \beta} (\tor) \hookrightarrow C(\tor)$ which is known to be true
if and only if $\alpha + \beta >1 /2$.
In case $\alpha =0$ we know $\hab = H^\beta (\tor^d)$. Hence, $H^{0,\beta} \hookrightarrow C(\tor^d)$ if and only if $\beta /d >1 /2$. 
\end{rem}

We will need the following Bernstein type inequality.

\begin{lem} \label{lem:bernstein}
Let $\min\{\alpha,\alpha+\beta-\gamma\}>0$ and $\ell\in\N^d_0$. Then
\be \label{eq:mixedbernstein}
\fhab\leq 2^{\alpha\lone+(\beta-\gamma)\linf}\|f\|_{H^{\gamma}}
\ee
holds for all $f\in\trigpol$.
\end{lem}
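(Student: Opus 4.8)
The plan is to reduce \eqref{eq:mixedbernstein} to an elementary comparison of dyadic weights via the Littlewood--Paley description of both norms. By Lemma \ref{basic1}(i), applied once with the given parameters and once with $(\alpha,\beta)$ replaced by $(0,\gamma)$ (so that $H^{0,\gamma}(\tor^d)=H^\gamma(\tor^d)$), one has
\[
\fhab^2=\sum_{k\in\N_0^d}2^{2(\alpha\kone+\beta\kinf)}\deltak^2,\qquad \|f\|_{H^{\gamma}}^2=\sum_{k\in\N_0^d}2^{2\gamma\kinf}\deltak^2 .
\]
Thus it is enough to bound the $\hab$-weight by $2^{2(\alpha\lone+(\beta-\gamma)\linf)}$ times the $H^{\gamma}$-weight on every dyadic block that actually contributes.

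First I would restrict the range of $k$. Since $f\in\trigpol$ has Fourier support in $\{k:|k_i|\le 2^{\ell_i},\ i=1,\dots,d\}$, the block $\cp_k=P_{k_1}\times\dots\times P_{k_d}$ from \eqref{f014} meets this support only when $k_i\le\ell_i$ for all $i$, so that $\deltak=0$ unless $k\le\ell$ componentwise. Hence both sums above effectively run over $k\le\ell$, and \eqref{eq:mixedbernstein} will follow as soon as one shows, for every such $k$,
\[
2^{2(\alpha\kone+\beta\kinf)}\le 2^{2(\alpha\lone+(\beta-\gamma)\linf)}\,2^{2\gamma\kinf}.
\]
Cancelling $2^{2\gamma\kinf}$ and taking logarithms, this is equivalent to the purely combinatorial statement
\[
\alpha\kone+(\beta-\gamma)\kinf\ \le\ \alpha\lone+(\beta-\gamma)\linf\qquad\text{whenever }k\le\ell .
\]

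The core of the argument is therefore the monotonicity of $\psi(k):=\alpha\kone+(\beta-\gamma)\kinf$ on $\N_0^d$ with respect to the componentwise partial order. I would establish $\psi(k)\le\psi(\ell)$ for $k\le\ell$ by passing from $k$ to $\ell$ one coordinate step at a time and checking that no single increment $k\mapsto k+e_j$ decreases $\psi$. Such an increment raises $\kone$ by exactly $1$ and raises $\kinf$ by either $0$ or $1$. If $\beta-\gamma\ge0$ both effects are nonnegative, so $\psi$ does not decrease. If $\beta-\gamma<0$, the only critical case is when the maximum coordinate actually increases, in which case the net change of $\psi$ equals $\alpha+(\beta-\gamma)$, nonnegative precisely because $\alpha+\beta-\gamma>0$; when the maximum is unchanged the change is merely $\alpha>0$.

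This last step is the only place where the hypotheses enter, and it is the whole point of the condition $\min\{\alpha,\alpha+\beta-\gamma\}>0$: the assumption $\alpha>0$ controls the always-present growth of the $\ell_1$-weight, while $\alpha+\beta-\gamma>0$ is exactly what is needed to absorb a possible drop of the $\ell_\infty$-weight when $\beta<\gamma$. I expect this monotonicity to be the main (and essentially only) genuine obstacle; once it is in hand, summing the block inequalities against $\deltak^2$ over $k\le\ell$ gives $\fhab^2\le 2^{2(\alpha\lone+(\beta-\gamma)\linf)}\|f\|_{H^{\gamma}}^2$, i.e.\ \eqref{eq:mixedbernstein}.
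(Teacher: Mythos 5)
Your proposal is correct and follows essentially the same route as the paper: both reduce to the Littlewood--Paley weights, use that $\delta_k(f)=0$ unless $k\le\ell$ componentwise, and then bound the weight ratio by its value at $k=\ell$. The only cosmetic difference is that you verify the monotonicity of $\alpha\kone+(\beta-\gamma)\kinf$ by single-coordinate increments, whereas the paper simply takes the maximum over $k\le\ell$ (the same monotonicity, in the spirit of its Lemma \ref{lem:monotonicity}) without spelling it out.
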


\begin{proof}
Indeed, for $f\in\trigpol$, we have
\beqq
\fhab^2&=&\sum_{\substack{k_i\leq \ell_i\\i=1,\cdots,d}} 2^{2(\alpha|k|_1+\beta\kinf}\, \deltak^2
\leq  \max_{\substack{k_i\leq \ell_i\\i=1,\cdots,d}} 2^{2(\alpha |k|_1+(\beta-\gamma)\kinf)}\, 
\sum_{\substack{k_i\leq\ell_i\\i=1,\cdots,d}} 2^{2\gamma |k|_{\infty}}\deltak^2\\
&\leq& 2^{2(\alpha \lone+(\beta-\gamma)\linf)}\, \|f\|^2_{H^{\gamma}}.
\eeqq
\end{proof}


\section{Sampling representations}
\label{sample1}


Our main aim in this section consists in deriving a specific Nikol'skij-type representation 
for the spaces $\hab$ in the spirit of Lemma \ref{basic1}. Specific in the sense, that the building blocks in 
the decomposition originate from associated sampling operators of type \eqref{f09}.
First we need some technical lemmas.\\

\begin{lem}\label{lem:monotonicity}
Let $\alpha>0$, $\beta \in \mathbb{R}$, $\min\{\alpha,\alpha+\beta\}>0$ and 
$$\psi(k):=\alpha\kone+\beta\kinf\quad,\quad k\in \N_0^d.$$ 
Then there is an $\eps>0$ such that 
$$\psi(k)\leq \psi(k')-\varepsilon(|k'|_1-\kone)$$
holds for all $k', k \in \N_0^d$ with $k'\geq k$ component-wise.
\end{lem}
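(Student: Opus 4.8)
The plan is to rephrase the claim as the assertion that $\psi(k')-\psi(k)\ge \varepsilon(|k'|_1-|k|_1)$ whenever $k'\ge k$ componentwise, and then to exhibit an explicit admissible $\varepsilon$. Writing out the definition of $\psi$ immediately gives
$$
\psi(k')-\psi(k)=\alpha\bigl(|k'|_1-|k|_1\bigr)+\beta\bigl(|k'|_\infty-|k|_\infty\bigr),
$$
so the whole matter reduces to controlling the increment of the $\ell_\infty$-norm by the increment of the $\ell_1$-norm, since the two increments appear with coefficients $\beta$ and $\alpha$ respectively.

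The key elementary fact I would isolate first is that for $k'\ge k\ge 0$ componentwise one has
$$
0\le |k'|_\infty-|k|_\infty\le |k'|_1-|k|_1.
$$
The left inequality is immediate from monotonicity of the maximum. For the right inequality I would pick an index $i$ with $k'_i=|k'|_\infty$; then, using $|k|_\infty\ge k_i$ and the fact that every coordinate difference $k'_j-k_j$ is nonnegative, one gets $|k'|_\infty-|k|_\infty\le k'_i-k_i\le \sum_{j=1}^d(k'_j-k_j)=|k'|_1-|k|_1$.

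With this at hand I would conclude by distinguishing the sign of $\beta$. If $\beta\ge 0$, the term $\beta(|k'|_\infty-|k|_\infty)$ is nonnegative, hence $\psi(k')-\psi(k)\ge \alpha(|k'|_1-|k|_1)$ and $\varepsilon=\alpha$ is admissible. If $\beta<0$, the term $\beta(|k'|_\infty-|k|_\infty)$ is the one working against us; combining $|k'|_\infty-|k|_\infty\le |k'|_1-|k|_1$ with $\beta<0$ yields $\beta(|k'|_\infty-|k|_\infty)\ge \beta(|k'|_1-|k|_1)$, so that $\psi(k')-\psi(k)\ge (\alpha+\beta)(|k'|_1-|k|_1)$, and $\varepsilon=\alpha+\beta$ is admissible because $\alpha+\beta>0$ by hypothesis. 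In both cases the single choice $\varepsilon:=\min\{\alpha,\alpha+\beta\}>0$ works, and its positivity is exactly what the assumption $\min\{\alpha,\alpha+\beta\}>0$ provides.

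I expect no serious obstacle here: the only point requiring genuine care is the comparison between the $\ell_\infty$- and $\ell_1$-increments in the case $\beta<0$, where the isotropic contribution \emph{lowers} the value of $\psi$ and must therefore be dominated by the mixed contribution. The hypothesis $\alpha+\beta>0$ is precisely the condition guaranteeing that this domination leaves a strictly positive slack $\varepsilon$, which is all that the lemma asserts.
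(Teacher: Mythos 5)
Your proof is correct and follows essentially the same route as the paper: the same rewriting of $\psi(k')-\psi(k)$, the same comparison $|k'|_\infty-|k|_\infty\le |k'|_1-|k|_1$, and the same case split on the sign of $\beta$ yielding $\varepsilon=\min\{\alpha,\alpha+\beta\}$. The only cosmetic difference is that you justify the $\ell_\infty$-versus-$\ell_1$ increment bound by picking a maximizing index directly, whereas the paper passes through $|k'|_\infty-|k|_\infty\le|k'-k|_\infty\le|k'-k|_1$; both are fine.
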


\begin{proof}Let $k'\geq k$. This implies
\be\label{ws-20}
\psi(k) = \psi(k')-\alpha|k'-k|_1-\beta(|k'|_{\infty}-|k|_{\infty})
\ee
We need to distinguish two cases.\\ 
Case 1. If $\beta\geq0$ we have as an immediate consequence of \eqref{ws-20}
$$\psi(k)\leq \psi(k')-\alpha|k'-k|_1\,.$$
\\
Case 2. Let $\beta<0$. From \eqref{ws-20} and 
$$
  |k'|_{\infty} - |k|_{\infty} \leq |k'-k|_{\infty} \leq |k'-k|_1
$$
we obtain
$$
   \psi(k) \le  \psi(k')-(\alpha+\beta)|k'-k|_1\,.
$$
\end{proof}

Recall the linear operator $q_k$ has been defined in \eqref{f09}.  Let us settle the following cancellation property. 

\begin{lem}\label{lem:cancell} Let $\ell,k \in \N_0^d$ with $k_n<\ell_n$ for some $n \in \{1,...,d\}$. Let further $f \in T^k$ and $q_\ell$ be the operator defined in \eqref{f09}. Then $q_{\ell}(f) = 0$.
\end{lem}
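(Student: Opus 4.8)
The plan is to exploit the tensor-product structure of $q_\ell$ together with the polynomial reproduction property of the univariate interpolation operators $I_m$. First I would record the elementary fact that $I_m$ reproduces every univariate trigonometric polynomial of degree at most $m$: the space $\Span\{e^{ij\cdot}:|j|\le m\}$ has dimension $2m+1$, and $I_m$ is interpolation at the $2m+1$ pairwise distinct nodes $t^m_0,\dots,t^m_{2m}$, so the interpolant of such a polynomial must be the polynomial itself. In particular $I_m(e^{ij\cdot})=e^{ij\cdot}$ whenever $|j|\le m$.

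Next I would draw the consequence for the differences $\eta_{\ell_n}$. By hypothesis there is an index $n$ with $k_n<\ell_n$; since $k_n\in\N_0$ this forces $\ell_n\ge 1$, so $\eta_{\ell_n}=I_{2^{\ell_n}}-I_{2^{\ell_n-1}}$. For any frequency $j$ with $|j|\le 2^{\ell_n-1}$ both $I_{2^{\ell_n}}$ and $I_{2^{\ell_n-1}}$ reproduce $e^{ij\cdot}$ (here one uses $|j|\le 2^{\ell_n-1}\le 2^{\ell_n}$), whence $\eta_{\ell_n}(e^{ij\cdot})=0$. Thus $\eta_{\ell_n}$ annihilates every univariate exponential of frequency at most $2^{\ell_n-1}$.

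Finally I would combine these observations using the tensorization. Since $f\in T^k$, it is a finite linear combination $f=\sum_m a_m e^{imx}$ over frequencies $m$ with $|m_i|\le 2^{k_i}$ for all $i$, and by linearity together with the product form $q_\ell=\eta_{\ell_1}\otimes\dots\otimes\eta_{\ell_d}$ one has $q_\ell(e^{imx})=\prod_{i=1}^d \eta_{\ell_i}(e^{im_ix_i})$. For each term the $n$-th factor satisfies $|m_n|\le 2^{k_n}\le 2^{\ell_n-1}$ (using $k_n\le\ell_n-1$), so by the previous step $\eta_{\ell_n}(e^{im_nx_n})=0$, and the whole product vanishes. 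Summing over $m$ yields $q_\ell(f)=0$.

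There is essentially no serious obstacle here; once the reproduction property is in place the argument is a direct computation. The only point requiring a little care is the bookkeeping with the dyadic indices, namely turning the integer strict inequality $k_n<\ell_n$ into $2^{k_n}\le 2^{\ell_n-1}$, which simultaneously rules out the exceptional value $\ell_n=0$ (where $\eta_0=I_1$) and guarantees that $\eta_{\ell_n}$ is genuinely a difference of two interpolation operators in the critical coordinate $n$.
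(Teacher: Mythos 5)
Your argument is correct and follows essentially the same route as the paper's proof: expand $f\in\trigpolk$ into exponentials, use the tensor-product form of $q_\ell$, and observe that $\eta_{\ell_n}(e^{im_n\cdot})=0$ because $|m_n|\le 2^{k_n}\le 2^{\ell_n-1}$ makes both $I_{2^{\ell_n}}$ and $I_{2^{\ell_n-1}}$ reproduce that exponential. The extra care you take in justifying the univariate reproduction property and in ruling out $\ell_n=0$ is welcome but not a departure from the paper's argument.
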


\begin{proof} Since $f \in \mathcal{T}^k$ we have
$$
    f = \sum_{\substack{|m_j|\le 2^{k_j}\\j=1,\cdots,d}} a_m \, e^{i m x} $$
and 
$$
   q_\ell(f)(x) = \sum\limits_{\substack{|m_j|\le 2^{k_j}\\j=1,\cdots,d}} a_m q_{\ell}(e^{im\cdot})(x) = \sum\limits_{\substack{|m_j|\le 2^{k_j}\\j=1,\cdots,d}} a_m \prod\limits_{j=1}^d \eta_{\ell_j}(e^{im_j\cdot})(x_j)\,.
$$
Due to $2^{\ell_n-1}\geq 2^{k_n}\geq m_n$ we have
$$
    \eta_{\ell_n}(e^{im_n\cdot})(x_n) = (I_{2^{\ell_n}}-I_{2^{\ell_n-1}})(e^{im_n\cdot})(x_n) = 0\,
$$
which implies $q_{\ell}(f) = 0$\,. 
\end{proof}

\noindent Now we are in the position to proof  Nikol'skij's type representation theorems for the spaces $\hab$.

\begin{prop}\label{lem:lowernorm}
Let $\min (\alpha,\alpha + \beta) > 1/2$. Then every function $f \in \hab$ can be represented by the  series 
\be \label{Representation}
f
\ = \
\sum_{k\in\N_0^d}q_k(f)
\ee
converging unconditionally in $\hab$, and satisfying the condition
\be \label{ConvegenceCondition[hab]}
 \sum_{k\in \N^d_0}2^{2(\alpha\kone+\beta\kinf)}\|q_k(f)\|_2^2\leq C \fhab^2 
\ee
with a constant $C=C(\alpha,\beta,d)>0$. 
\end{prop}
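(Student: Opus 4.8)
The plan is to pass through the Littlewood--Paley decomposition of Lemma \ref{basic1} and to reduce the whole statement to a Schur-type estimate for a banded kernel on $\ell_2$. Throughout I write $\psi(k):=\alpha\kone+\beta\kinf$, so that $\fhab^2 \asymp \sum_{\ell\in\N_0^d}2^{2\psi(\ell)}\deltal^2$ by Lemma \ref{basic1}. Since $\min(\alpha,\alpha+\beta)>1/2$, the hypotheses of Theorem \ref{einbettung} are met (for $\beta\ge0$ one has $\alpha+\beta/d\ge\alpha>1/2$, while for $\beta<0$ one has $\alpha+\beta/d>\alpha+\beta>1/2$), so $\hab\hookrightarrow C(\tor^d)$ and each $q_k$ is a well-defined bounded operator from $\hab$ into $L_2(\tor^d)$. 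The task then splits into the quantitative bound and the reconstruction identity.

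The analytic core is a univariate aliasing estimate: for a univariate trigonometric polynomial $g$ with Fourier support in the dyadic block $P_\ell$ and any $j\le\ell$ one has $\|\eta_j(g)\|_2\lesssim 2^{(\ell-j)/2}\|g\|_2$. I would prove it from the aliasing formula $I_{2^j}g=\sum_{|m'|\le 2^j}\big(\sum_{m\equiv m'}c_m\big)e^{im'x}$, where the inner sum runs over the at most $C\,2^{\ell-j}$ frequencies of $P_\ell$ lying in a fixed residue class modulo $2^{j+1}+1$; applying Cauchy--Schwarz on each residue class, summing by Parseval, and using the same bound for $I_{2^{j-1}}$ gives the claim for $\eta_j=I_{2^j}-I_{2^{j-1}}$ via the triangle inequality. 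Tensorizing this over the $d$ coordinates by Fubini (each factor $\eta_{k_i}$ acts only in the $i$-th variable and leaves the remaining blocks intact) and invoking Lemma \ref{lem:cancell} to discard all terms with $k\not\le\ell$, I obtain for $f\in\hab$ the block estimate $\|q_k\delta_\ell(f)\|_2\lesssim 2^{|\ell-k|_1/2}\deltal$ when $k\le\ell$, and $q_k\delta_\ell(f)=0$ otherwise.

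With this in hand the claimed bound follows from Schur's test. Writing $f=\sum_\ell\delta_\ell(f)$ and using the block estimate gives
$$2^{\psi(k)}\|q_k(f)\|_2\ \le\ \sum_{\ell\ge k}2^{\psi(k)-\psi(\ell)+|\ell-k|_1/2}\,\big(2^{\psi(\ell)}\deltal\big).$$
The decisive point is that Lemma \ref{lem:monotonicity} yields $\psi(k)-\psi(\ell)\le-\eps|\ell-k|_1$ with $\eps=\min(\alpha,\alpha+\beta)>1/2$, so the kernel $K(k,\ell)=2^{\psi(k)-\psi(\ell)+|\ell-k|_1/2}$ is dominated by $2^{-(\eps-1/2)|\ell-k|_1}$, whose row and column sums are both bounded by $(1-2^{-(\eps-1/2)})^{-d}$; Schur's lemma then gives $\sum_k 2^{2\psi(k)}\|q_k(f)\|_2^2\lesssim\fhab^2$. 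For the representation I first note that for a trigonometric polynomial $f$ the telescoping identity $\sum_{k_i=0}^{M}\eta_{k_i}=I_{2^M}$ together with the exact reproduction of $f$ by $\bigotimes_i I_{2^M}$ (for $M$ large) gives $f=\sum_k q_k(f)$ as a finite sum. For general $f\in\hab$ I would prove the dual synthesis estimate $\|\sum_{k\in F}q_k(f)\|_{\hab}^2\lesssim\sum_{k\in F}2^{2\psi(k)}\|q_k(f)\|_2^2$: since $q_k(f)\in\mathcal{T}^{k}$, a block $\delta_\ell$ only sees those $k$ with $\ell\le k+\bar 1$, so the triangle inequality in each block produces the transposed banded kernel $2^{\psi(\ell)-\psi(k)}$, again Schur-summable by Lemma \ref{lem:monotonicity}. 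Combined with the first estimate, finiteness of $\sum_k 2^{2\psi(k)}\|q_k(f)\|_2^2$ forces the tail sums over finite index sets far from the origin to be small in $\hab$, which is exactly unconditional convergence; the limit is identified with $f$ by approximating $f$ by trigonometric polynomials and using continuity of the analysis and synthesis maps.

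The main obstacle is the univariate aliasing estimate and its exact numerology: the interpolation operator genuinely amplifies a block $P_\ell$ by the sharp factor $2^{(\ell-j)/2}$, so the argument closes only because the smoothness gain $\eps>1/2$ furnished by Lemma \ref{lem:monotonicity} strictly beats the $1/2$ produced by aliasing. The condition $\min(\alpha,\alpha+\beta)>1/2$ is used precisely here (and a second time to secure $\hab\hookrightarrow C(\tor^d)$). The only routine nuisance is the $\bar 1$-shift in the support overlap on the synthesis side, which displaces the banded kernel by a bounded amount without affecting its geometric summability.
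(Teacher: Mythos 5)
Your proof is correct, and its skeleton is the same as the paper's: decompose $q_k(f)=\sum_{\ell\ge k}q_k(\delta_\ell(f))$ via Lemma \ref{lem:cancell}, estimate each block by an aliasing/Bernstein bound of order $2^{|\ell-k|_1/2+}$, and sum the resulting banded kernel using the decay $\psi(k)-\psi(\ell)\le-\eps|\ell-k|_1$ with $\eps=\min(\alpha,\alpha+\beta)$ from Lemma \ref{lem:monotonicity}. The differences are in the packaging, and they work in your favour. First, you prove the sharp univariate aliasing bound $\|\eta_j(g)\|_2\lesssim 2^{(\ell-j)/2}\|g\|_2$ for $g$ supported in $P_\ell$ directly from the residue-class formula and tensorize it; the paper instead routes through $\|\eta_j:H^{\zeta}(\tor)\to L_2\|\lesssim 2^{-\zeta j}$ for some $\zeta>1/2$ plus the Bernstein inequality, which forces the rather delicate choice of auxiliary parameters $\talpha,\tbeta,\zeta$ (including a small picture for the case $\beta<0$) that your endpoint estimate makes unnecessary — the whole margin $\eps-1/2>0$ is spent in one place, in the Schur test. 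Second, your Schur's-lemma formulation is exactly the paper's ``insert a weight, apply H\"older, interchange the order of summation'' step, just stated as a standard lemma. Third, for the convergence you establish the synthesis bound $\|\sum_{k\in F}q_k(f)\|_{\hab}^2\lesssim\sum_{k\in F}2^{2\psi(k)}\|q_k(f)\|_2^2$ (this is Proposition \ref{lem:uppernorm} applied to $f_k=q_k(f)\mathbf{1}_F(k)$, with the same $\bar 1$-shift you note) and deduce unconditional convergence from the Cauchy criterion on tails; the paper instead claims absolute convergence via $\sum_k 2^{\psi(k)}\|q_k(f)\|_2\le C\fhab$, a bound that does not follow from the square-summability \eqref{ConvegenceCondition[hab]} alone and really needs the $\ell_1$-version of the same Schur argument — so your route is not only different but arguably tighter at this point. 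The identification of the limit with $f$ by density of trigonometric polynomials is common to both proofs.
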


\begin{proof}
{\em Step 1}. We first prove \eqref{ConvegenceCondition[hab]} for $f \in \hab$. Let us assume $\beta\neq 0$, otherwise set $\beta=\tbeta=0$. For technical reasons we need to fix $\talpha, \zeta, \tbeta \in \re $ sucht that 
\be\label{requirements}\min\{\talpha-\zeta,\;\talpha-\zeta+\tbeta\}>0,\;\alpha-\talpha>0, \;\tbeta<\beta\mbox{ and }\zeta>\frac{1}{2}\ee
holds. For $\beta>0$ it is easy to find parameters $\talpha, \zeta, \tbeta$ fulfilling \eqref{requirements}. Critical is the case $\beta<0$. Here we choose the parameters in the following way:
\begin{figure}[H]
\centering
\begin{tikzpicture}
    \begin{scope}[thick,font=\scriptsize]
    \draw (0,0) -- (8,0);

   
    \draw (2,-3pt) -- (2,3pt)   node [above] {$0$};
    \draw (4,-3pt) -- (4,3pt)   node [above] {$\frac{1}{2}$};
    \draw (1,-3pt) -- (1,3pt)   node [above] {$\beta$};
    \draw (6,-3pt) -- (6,3pt)   node [above] {$\alpha$};
    \draw (5,-3pt) -- (5,3pt)   node [above] {$\alpha+\beta$};
    \draw (5.85,3pt) -- (5.85,-3pt)   node [below] {$\talpha$};
	\draw (4.15,3pt) -- (4.15,-3pt)   node [below] {$\zeta$};
	\draw (0.8,3pt) -- (0.8,-3pt)   node [below] {$\tbeta$};
    \end{scope}

\end{tikzpicture}
\label{fig_Squares}
\end{figure}
\noindent The condition $\alpha+\beta>\frac{1}{2}$ implies that there is some 
$\varepsilon>0$ such that $\alpha+\beta-\varepsilon> \frac{1}{2}$ holds. Choose now $\talpha, \tbeta, \zeta \in\re$  s.t.
$\beta-\frac{\varepsilon}{2} < \tbeta < \beta$ and  $\frac{1}{2} < \zeta <\talpha<\alpha$ with
$$ 0 < \alpha-\frac{1}{2}-\frac{\varepsilon}{2}<\talpha-\zeta<\alpha-\frac{1}{2}.$$
Obviously this is possible. It is easy to check that such a choice fulfills the properties in \eqref{requirements}
\beqq
\talpha-\zeta+\tbeta&>& \Big(\alpha-\frac{1}{2}-\frac{\varepsilon}{2})+\Big(\beta-\frac{\varepsilon}{2}\Big)\\
&=&\Big(\alpha+\beta-\varepsilon\Big)-\frac{1}{2}\\
&>& 0.
\eeqq
 We claim that there exists a constant $c$ such that
\be\label{ws-21} 
2^{\talpha\lone+\tbeta\linf}\|q_{\ell}(f)\|_2\le c\,  \Big(
\sum_{\substack{k_i\geq\ell_i\\i=1,\cdots,d}}2^{2(\talpha\kone+\tbeta\kinf)}\|\delta_k(f)
\|^2_2\Big)^{\frac{1}{2}}
\ee
holds for all $\ell \in \N_0^d$. 
Because of  $f=\sum_{k\in\N^d_0}\delta_k(f)$  and linearity of $q_k$ we have
\[
\|q_{\ell}(f)\|_2  =  \Big\|\sum_{k\in\N^d_0}q_{\ell}(\delta_k(f))\Big\|_2\, .
\]
Using $\delta_k (f)\in \trigpolk$, Lemma \ref{lem:cancell}, and the triangle inequality we find
\[
\|q_{\ell}(f)\|_2 = \Big\|\sum_{\substack{k_i\geq\ell_i\\i=1,\cdots,d}}q_{\ell}(\delta_k(f))\Big\|_2
\leq \sum_{\substack{k_i\geq\ell_i\\i=1,\cdots,d}}\|q_{\ell}(\delta_k(f))\|_2.
\]
Using Lemma 5 in \cite{SU} and known results about the approximation power of the $I_m$, see \cite{si01},
we  obtain
\beqq \|q_{\ell}(f)\|_2\lesssim 2^{-\zeta\lone}\sum_{\substack{k_i\geq\ell_i\\i=1,\cdots,d}}\|\delta_k(f)\|_{\hmixedtta}.
\eeqq
Lemma \ref{lem:bernstein} yields
\beqq
\|q_{\ell}(f)\|_2\lesssim  2^{-\zeta\lone}\sum_{\substack{k_i\geq\ell_i\\i=1,\cdots,d}}2^{\zeta\kone}\|\delta_k(f)\|_2.
\eeqq
We proceed by inserting an additional weight and apply H\"older's inequality 
\be \label{eq:1234}
\|q_{\ell}(f)\|_2 \lesssim   2^{-\zeta\lone}\Big(\sum_{\substack{k_i\geq \ell_i\\i=1,\cdots,d}}2^{-2[(\talpha-\zeta)\kone+\tbeta\kinf]}\Big)^{\frac{1}{2}}\Big(\sum_{\substack{k_i\geq \ell_i\\i=1,\cdots,d}}2^{2(\talpha\kone+\tbeta\kinf)}\|\delta_k(f)\|^2_2\Big)^{\frac{1}{2}}.
\ee
Lemma \ref{lem:monotonicity} with $\xi>0$ chosen such that $\min\{\talpha-\zeta,\talpha-\zeta+\tbeta\}\ge \xi$ leads to
\beqq
\sum_{\substack{k_i\geq \ell_i\\i=1,\cdots,d}}2^{-2[(\talpha-\zeta)\kone+\tbeta\kinf]}
&\leq&
2^{-2[(\talpha-\zeta)\lone+\tbeta\linf]}\sum_{\substack{k_i\geq \ell_i\\i=1,\cdots,d}} \, 2^{-2\xi|k-\ell|_1}\\
&\lesssim&2^{-2[(\talpha-\zeta)\lone+\tbeta\linf]}.
\eeqq
Inserting this into \eqref{eq:1234} proves \eqref{ws-21}.

Taking squares and summing up with respect to $\ell$ in \eqref{ws-21} we get
\beqq 
\sum_{\ell\in\N^d_0}2^{2(\alpha\lone+\beta\linf)}\|q_{\ell}(f)\|^2_2 &\lesssim&  \sum_{\ell\in\N^d_0} 2^{ 2[(\alpha-\talpha)\lone+(\beta-\tbeta)\linf] }\sum_{\substack{k_i\geq \ell_i \\ i=1,\cdots,d}}2^{2(\talpha\kone+\tbeta\kinf)}\deltak^2.
\eeqq
Next, interchanging the order of summation  yields
\beqq \sum_{\ell\in\N^d_0}2^{2(\alpha\kone+\beta\kinf)}\|q_{\ell}(f)\|^2_2&\lesssim& \sum_{k\in\N^d_0} 2^{2(\talpha\kone+\tbeta\kinf)}\deltak^2\sum_{\substack{\ell_i\leq k_i\\i=1,\cdots,d}}2^{2((\alpha-\talpha)\lone+(\beta-\tbeta)\linf}.\\
\eeqq
One more time we apply Lemma \ref{lem:monotonicity}, this time with $0 < \xi \le \alpha-\talpha$, which results in
\beqq 
 \sum_{\ell\in\N^d_0} && \hspace{-0.6cm} 2^{2(\alpha\kone+\beta\kinf)} \, \|q_{\ell}(f)\|^2_2
\\
&\leq&
\sum_{k\in\N^d_0} 2^{2(\talpha\kone+\tbeta\kinf)}\, \deltak^2 \, \,  2^{2((\alpha-\talpha)\kone+(\beta-\tbeta)\kinf)} 
\sum_{\substack{\ell_i\leq k_i\\i=1,\cdots,d}} 2^{-2\xi|k-\ell|_1}
\\
&\lesssim& \sum_{k\in\N^d_0}\,  2^{2(\alpha\kone+\beta\kinf)}\, \deltak^2.
\eeqq
This proves \eqref{ConvegenceCondition[hab]}. \\\\
{\em Step 2.} Let $f \in \hab$. We will show that $f$ can be represented by the  series \eqref{Representation}
converging in the norm of $\hab$. Applying Lemma \ref{lem:bernstein}, H\"older's inequality and 
\eqref{ConvegenceCondition[hab]} yields
\beq
\sum_{k\in\N_0^d}\|q_k(f)\|_{\hab} &\leq& \sum_{k\in\N_0^d}2^{\alpha\kone+\beta\kinf}\|q_k(f)\|_2\nonumber\\
&\leq&C\|f\|_{\hab}<\infty.\label{eq:upqkf}
\eeq
Hence $\sum_{k\in\N_0^d}\|q_k(f)\|_{\hab}<\infty$ and therefore $\sum_{k\in\N_0^d} q_k(f)$ converges unconditionally in $\hab$ if $\min\{\alpha,\alpha+\beta\}>\frac{1}{2}$. We denote the limit as $F:=\sum_{k\in\N_0^d}q_k(f)$. By the definition of the norm in $\hab$
$$\fhab^2=\sum_{\ell\in\N_0^d}2^{2(\alpha\lone+\beta\linf)}\deltal^2$$ 
we see that the trigonometric polynomials are dense in $\hab$. Let now $t$ be a trigonometric polynomial. We consider
$\|F-t\|_{\hab}$. Clearly, $t=\sum_{k\in\N_0^d}q_k(t)$ and, by definition, $F=\sum_{k\in\N_0^d}q_k(f)$ implying
\be \label{eq:ft} F-t=\sum_{k\in\N_0^d}q_k(f-t)\ee with convergence in $\hab$ for every trigonometric polynomial $t$. Now, for every trigonometric polynomial $t$ we have
\be \label{eq:ftnorm} \|F-f\|_{\hab}\leq \|F-t\|_{\hab}+\|t-f\|_{\hab}.\ee
By \eqref{eq:upqkf} and \eqref{eq:ft} we get
$$\|F-t\|_{\hab}\leq C\|f-t\|_{\hab}.$$
Putting this into \ref{eq:ftnorm} yields
$$\|F-f\|_{\hab}\leq (C+1)\|f-t\|_{\hab}.$$
Choosing $t$ close enough to $f$ gives
$$\|F-f\|_{\hab}<\varepsilon$$
for all $\varepsilon>0$ and hence $\|F-f\|_{\hab}=0$ which is
$$f=\sum_{k\in\N_0^d}q_k(f)$$
in $\hab$.

\end{proof}

\begin{prop}\label{lem:uppernorm}
Let $\beta\in\re$, $\min\{\alpha,\alpha+\beta\}>0$ and $(f_k)_{k\in\N_0^d}$ a sequence with $f_k\in\trigpolk$ satisfying
$$\sum_{k\in \N_0^d}2^{2(\alpha\kone+\beta\kinf)}\|f_k\|_2^2<\infty.$$
Assume that the series $\sum_{k\in\N_0^d}f_k$ converges in $L_2 (\tor^d)$ to a function $f$. Then $f \in \hab$, and moreover, there is a constant $C=C(\alpha,\beta,d)>0$ such that 
\be 
\fhab^2 \leq C \sum_{k\in\N_0^d}2^{2(\alpha\kone+\beta\kinf)}\|f_k\|_2^2.
\ee
\end{prop}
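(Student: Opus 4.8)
The goal is an upper bound for $\fhab^2 = \sum_{\ell\in\N_0^d} 2^{2(\alpha\lone+\beta\linf)}\deltal^2$ in terms of the ``building block'' energy $\sum_k 2^{2(\alpha\kone+\beta\kinf)}\|f_k\|_2^2$. The natural strategy is to pass through the Littlewood--Paley blocks $\delta_\ell(f)$ of the limit function and relate each $\delta_\ell(f)$ to the family $(f_k)$. Since $\sum_k f_k$ converges to $f$ in $L_2(\tor^d)$, the Fourier projection onto the dyadic block $\cp_\ell$ is continuous on $L_2$, so I would first write $\delta_\ell(f) = \sum_{k\in\N_0^d} \delta_\ell(f_k)$ with $L_2$-convergence. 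The crucial structural observation is a cancellation/support property dual to Lemma \ref{lem:cancell}: because $f_k\in\trigpolk$ has frequencies bounded by $2^{k_i}$ in each coordinate, the block projection $\delta_\ell(f_k)$ vanishes unless $\ell_i \le k_i$ for all $i$ (i.e. $\ell\le k$ componentwise, since $\cp_\ell$ only captures frequencies of size $\sim 2^{\ell_i}$). Hence the inner sum collapses to $\delta_\ell(f)=\sum_{k\ge \ell}\delta_\ell(f_k)$, and by $\|\delta_\ell(f_k)\|_2\le\|f_k\|_2$ we obtain $\deltal \le \sum_{k\ge\ell}\|f_k\|_2$.

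The remaining task is purely a weighted sequence-space estimate, and here Lemma \ref{lem:monotonicity} is the workhorse, exactly as in Step 1 of Proposition \ref{lem:lowernorm}. Writing $\psi(k)=\alpha\kone+\beta\kinf$ and choosing $\eps>0$ from that lemma so that $\psi(\ell)\le\psi(k)-\eps|k-\ell|_1$ for $\ell\le k$, I would split the weight and apply the Cauchy--Schwarz inequality to the sum $\sum_{k\ge\ell}\|f_k\|_2 = \sum_{k\ge\ell} 2^{-\psi(k)}\,2^{\psi(k)}\|f_k\|_2$, inserting the decaying factor $2^{-\eps|k-\ell|_1}$ as the summable weight. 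This gives
\[
2^{2\psi(\ell)}\deltal^2 \;\lesssim\; 2^{2\psi(\ell)}\Big(\sum_{k\ge\ell}2^{-2\psi(k)-\eps|k-\ell|_1}\Big)\Big(\sum_{k\ge\ell}2^{2\psi(k)}\|f_k\|_2^2\,2^{-\eps|k-\ell|_1}\Big),
\]
and monotonicity bounds the first bracket by a constant times $2^{-2\psi(\ell)}$, so that $2^{2\psi(\ell)}\deltal^2\lesssim \sum_{k\ge\ell}2^{2\psi(k)}\|f_k\|_2^2\,2^{-\eps|k-\ell|_1}$.

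Finally I would sum over $\ell\in\N_0^d$ and interchange the order of summation, collecting for each fixed $k$ the geometric factor $\sum_{\ell\le k}2^{-\eps|k-\ell|_1}$, which is bounded by a dimensional constant. This yields $\fhab^2\lesssim \sum_{k}2^{2\psi(k)}\|f_k\|_2^2$, the desired inequality, and in particular shows the right-hand side is finite so that $f\in\hab$. I expect the main obstacle to be the first step: carefully justifying the support property $\delta_\ell(f_k)=0$ unless $\ell\le k$ and the interchange of summation and block projection, which relies on the $L_2$-continuity of the $\delta_\ell$ and the precise frequency localization of $\trigpolk$ versus $\cp_\ell$; the subsequent weighted estimate is a routine reprise of the technique already established in Proposition \ref{lem:lowernorm}.
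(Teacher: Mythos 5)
Your proposal is correct and follows essentially the same route as the paper: reduce to $\deltal\le\sum_{k\ge\ell}\|f_k\|_2$ via the frequency localization of $\trigpolk$ and the $L_2$-contractivity of $\delta_\ell$, then apply Cauchy--Schwarz with a weight controlled by Lemma \ref{lem:monotonicity} and interchange the order of summation; the paper achieves the needed room for the $\ell$-summation by introducing an auxiliary $\talpha<\alpha$ and spending $2^{2(\alpha-\talpha)\lone}$, whereas you retain an explicit factor $2^{-\eps|k-\ell|_1}$, which is an equivalent device. One bookkeeping slip: in your displayed Cauchy--Schwarz both brackets carry $2^{-\eps|k-\ell|_1}$, so the termwise product of the weights is $2^{-2\eps|k-\ell|_1}\|f_k\|_2^2$ rather than $\|f_k\|_2^2$; the correct splitting puts $2^{+\eps|k-\ell|_1}$ in the first bracket (which is still $\lesssim 2^{-2\psi(\ell)}$ since the lemma yields $2\eps$ of decay, or use $\eps/2$), after which your stated conclusion $2^{2\psi(\ell)}\deltal^2\lesssim\sum_{k\ge\ell}2^{2\psi(k)}2^{-\eps|k-\ell|_1}\|f_k\|_2^2$ and the rest of the argument go through verbatim.
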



\begin{proof} 
{\em Step 1}. Let $0<\tilde{\alpha}<\alpha$ and $\talpha+\beta >0$.
We claim that there exists a constant $c$ such that 
\be \label{eq:normeq1} 
2^{\talpha\lone+\beta\kinf}\deltal \le c\,  
 \Big(\sum_{\substack{k_i\geq {\ell}_i\\i=1,\cdots,d}}2^{2(\alpha\kone+\beta\kinf)}\|f_k\|_2^2\Big)^{\frac{1}{2}} 
\ee
holds for all $\ell\in\N_0^d$.
Clearly,  $\delta_{\ell}: ~L_2 (\tor^d) \to L_2 (\tor^d)$ is an orthogonal projection.
The projection properties of the operator $\delta_\ell$ together with $f_k\in \mathcal{T}^k$ yields
\be\label{eq:eq3}
\deltal\leq \sum_{\substack{k_i\geq {\ell}_i\\i=1,\cdots,d}}\|\delta_{\ell}(f_k)\|_2.
\ee
Thanks to $\|\, \delta_{\ell}\, | L_2 (\tor^d) \to L_2 (\tor^d)\| = 1$
we conclude
\be\label{eq:eq4}
\deltal \leq 
\sum_{\substack{k_i\geq\ell_i\\i=1,\cdots,d}}\|f_k\|_2.
\ee
H\"older's inequality yields
\be
\deltal \leq \Big(\sum_{\substack{k_i\geq \ell_i\\i=1,\cdots,d}}
2^{-2(\talpha\kone+\beta\kinf)}\Big)^{\frac{1}{2}} \Big(\sum_{\substack{k_i\geq \ell_i\\i=1,\cdots,d}}2^{2(\talpha\kone+\beta\kinf)}\|f_k\|^2_2\Big)^{\frac{1}{2}}.\label{eq:eq5}
\ee
Now we apply Lemma \ref{lem:monotonicity} and find
\[
\sum_{\substack{k_i\geq \ell_i\\i=1,\cdots,d}}2^{-2(\talpha\kone+\beta\kinf)}
\leq 2^{-2(\talpha\lone+\beta\linf)}\sum_{\substack{k_i\geq \ell_i\\i=1,\cdots,d}}2^{-2\varepsilon|k-\ell|_1}
\lesssim 2^{-2(\talpha\lone+\beta\linf)}\, .
\]
This proves \eqref{eq:normeq1}.\\
{\em Step 2}.  Inequality \eqref{eq:normeq1} yields 
\beqq
\sum_{\ell\in \N^d_0}2^{2(\alpha\lone+\beta\linf)}\deltal^2 
&\lesssim& \sum_{\ell\in \N^d_0}2^{2(\alpha-\talpha)\lone}
\sum_{\substack{k_i\geq \ell_i\\i=1,\cdots,d}}2^{2(\talpha\kone+\beta\kinf)}\|f_k\|^2_2\\
&=& \sum_{k\in \N^d_0}2^{2(\talpha\kone+\beta\kinf)}\|f_k\|^2_2
\sum_{\substack{ \ell_i\leq k_i\\i=1,\cdots,d}}2^{2(\alpha-\talpha)\lone}\\
&\lesssim & \sum_{k\in\N_0^d}2^{2(\alpha\kone+\beta\kinf)}\|f_k\|_2^2.
\eeqq
Since the left-hand side coincides with $\|\, f\, \|_{\hab}^2$ Proposition \ref{lem:uppernorm} is proved.
\end{proof}


After one more notation we are ready for the main result of this section.

\begin{defi}
Let $\min\{\alpha,\alpha+\beta\}>\frac{1}{2}$. We define 
$$\fhab^+ :=\Big(\sum_{k\in\N^d_0} \, 2^{2(\alpha\kone+\beta\kinf)}\, \|q_k(f)\|_2^2\Big)^{\frac{1}{2}}$$
for all $f\in\hab$.
\end{defi}

\begin{satz}\label{satz:equivnorm}
Let  $\min\{\alpha,\alpha+\beta\}>\frac{1}{2}$. 
Then a function $f$ on $\TTd$ belongs to 
the space $\hab$, if and only if 
$f$ can be represented by the series \eqref{Representation} converging in $\hab$ and satisfying the  condition
\eqref{ConvegenceCondition[hab]}.
Moreover, the  norm $\|f\|_{\hab}$ is equivalent to  the norm $\fhab^+$.
\end{satz}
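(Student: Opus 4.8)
The plan is to read the assertion as the conjunction of a representation statement and the two-sided norm estimate $\fhab\asymp\fhab^+$, and to extract each half from one of the two preceding propositions, so that essentially no new computation is required. I would first treat necessity together with the bound $\fhab^+\lesssim\fhab$. This is exactly the content of Proposition \ref{lem:lowernorm}: since $\min\{\alpha,\alpha+\beta\}>1/2$, every $f\in\hab$ is represented by the series \eqref{Representation} with unconditional convergence in $\hab$, and \eqref{ConvegenceCondition[hab]} holds. As the left-hand side of \eqref{ConvegenceCondition[hab]} is precisely $(\fhab^+)^2$, that inequality reads $\fhab^+\le\sqrt{C}\,\fhab$, which settles the ``only if'' direction.

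For the converse I would start from a continuous function $f$ on $\TTd$ for which the series \eqref{Representation} converges in $\hab$ and $\fhab^+<\infty$, and apply Proposition \ref{lem:uppernorm} with the choice $f_k:=q_k(f)$. Three hypotheses must be verified. First, $q_k(f)\in\trigpolk$: this is immediate from the tensor structure \eqref{f09}, because each univariate factor $\eta_{k_j}=I_{2^{k_j}}-I_{2^{k_j-1}}$ maps into trigonometric polynomials of degree at most $2^{k_j}$ in the variable $x_j$. Second, $\sum_{k\in\N_0^d}2^{2(\alpha\kone+\beta\kinf)}\|f_k\|_2^2=(\fhab^+)^2<\infty$ by assumption. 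Third, the partial sums of $\sum_k q_k(f)$ must converge in $L_2(\TTd)$ to $f$; this follows from the assumed convergence in $\hab$ and the continuous embedding $\hab\hookrightarrow L_2(\TTd)$. Since $\min\{\alpha,\alpha+\beta\}>1/2>0$, Proposition \ref{lem:uppernorm} then yields $f\in\hab$ with $\fhab^2\le C\,(\fhab^+)^2$, that is $\fhab\lesssim\fhab^+$. Combining the two bounds gives the claimed equivalence $\fhab\asymp\fhab^+$, and the membership characterization follows at once.

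I do not anticipate a serious obstacle, since the genuine analytic work — the Bernstein inequality of Lemma \ref{lem:bernstein}, the cancellation Lemma \ref{lem:cancell}, and the summation interchanges governed by the monotonicity Lemma \ref{lem:monotonicity} — has already been carried out inside Propositions \ref{lem:lowernorm} and \ref{lem:uppernorm}. The only delicate points are bookkeeping ones: ensuring that the hypothesis ``converging in $\hab$'' is used precisely to pass to $L_2$-convergence, so that Proposition \ref{lem:uppernorm} becomes applicable, and observing that the standing assumption $\min\{\alpha,\alpha+\beta\}>1/2$ forces $\min\{\alpha+\beta,\alpha+\beta/d\}>1/2$, whence Theorem \ref{einbettung} guarantees $\hab\hookrightarrow C(\TTd)$ and the point evaluations defining $q_k(f)$ are meaningful throughout.
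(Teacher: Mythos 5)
Your proposal is correct and follows exactly the route the paper takes: the paper's proof of Theorem \ref{satz:equivnorm} consists of the single remark that it is an easy consequence of Proposition \ref{lem:lowernorm} and Proposition \ref{lem:uppernorm} applied with $f_k=q_k(f)$. Your additional bookkeeping (verifying $q_k(f)\in\trigpolk$, passing from $\hab$-convergence to $L_2$-convergence, and checking that $\min\{\alpha,\alpha+\beta\}>\tfrac12$ implies the hypotheses of Theorem \ref{einbettung}) is accurate and merely makes explicit what the paper leaves implicit.
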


\begin{proof}
This result is an easy consequence of Proposition\ \ref{lem:lowernorm} and Proposition\ \ref{lem:uppernorm}, applied with $f_k=q_k(f)$.
\end{proof}

\begin{rem}\label{identisch}
 \rm
 
(i) The restriction $\min\{\alpha,\alpha+\beta\}>\frac{1}{2}$ is essentially optimal, see Remark \ref{optimal}.\\
(ii) The potential of sampling representations has been first recognized by D\~ung \cite{Di11,Di13}.
There the non-periodic situation in connection with tensor product B-spline series is treated in the unit cube.
\end{rem}





\section{Sampling on energy-norm based sparse grids}
\label{sample2}


In this section we consider the quality of approximation by sampling operators 
using energy-norm based sparse grids. In fact, a suitable sampling operator $Q_{\Delta}$ uses a slightly
larger set $\Delta_{\eps}$ compared to $\Delta$ from \eqref{f08} with the same combinatorial properties, see Lemma \ref{rank1} below. We put
\be\label{eq101} 
    \Delta_{\eps}(\xi) :=\{k\in \N_0^d:(\alpha-\eps)|k|_1-(\gamma-\beta-\eps)|k|_{\infty} \leq \xi\}\quad,\quad \xi>0\,,
\ee

\begin{satz}\label{satz:approxhab}
Let $\alpha>0$, $\gamma\geq 0$ and $\beta<\gamma$ such that $\min\{\alpha,\alpha+\beta\}>\frac{1}{2}$. Let further $0<\varepsilon<\gamma-\beta<\alpha$. 
Then there exists a constant $C=C(\alpha,\beta,\gamma,\eps,d)>0$ such that
\be 
\|f-Q_{\Delta_{\varepsilon}(\xi)}f\|_{\hiso}\leq C \, 2^{-\xi}\, \|f\|_{\hab}
\ee
holds for all $f\in \hab$ and all $\xi>0$. 
\end{satz}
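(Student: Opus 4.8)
The plan is to reduce the assertion to a tail estimate for the sampling representation from Theorem~\ref{satz:equivnorm} and then to read off the decay directly from the inequality defining $\Delta_\eps(\xi)$. Since $\min\{\alpha,\alpha+\beta\}>\tfrac12$, Theorem~\ref{satz:equivnorm} together with Proposition~\ref{lem:lowernorm} gives the unconditionally convergent expansion $f=\sum_{k\in\N_0^d}q_k(f)$ in \hab, so that
\[
f-Q_{\Delta_\eps(\xi)}f=\sum_{k\notin\Delta_\eps(\xi)}q_k(f),
\]
with convergence in \hab, hence in $L_2(\tor^d)$. Each block satisfies $q_k(f)\in\trigpolk$, and by Proposition~\ref{lem:lowernorm} the numbers $b_k:=2^{\alpha\kone+\beta\kinf}\|q_k(f)\|_2$ obey $\sum_k b_k^2\ls\fhab^2$. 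Thus everything reduces to estimating the \hiso-norm of a lacunary-type series whose blocks live in nested frequency cubes.

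The key idea is to measure this tail in a space carrying a \emph{small} amount of mixed smoothness and then embed into \hiso. Assume first $\gamma>0$. I would apply Proposition~\ref{lem:uppernorm} with target parameters $(\eps,\gamma-\eps)$ — admissible since $\min\{\eps,\gamma\}>0$ — to the sequence $(q_k(f))_{k\notin\Delta_\eps(\xi)}$ (extended by $0$), obtaining
\[
\|f-Q_{\Delta_\eps(\xi)}f\|_{H^{\eps,\gamma-\eps}(\tor^d)}^2\ \ls\ \sum_{k\notin\Delta_\eps(\xi)}2^{2(\eps\kone+(\gamma-\eps)\kinf)}\|q_k(f)\|_2^2.
\]
Because $\eps\kone+(\gamma-\eps)\kinf\ge\gamma\kinf$ for all $k$ (as $\kone\ge\kinf$), the Littlewood--Paley norms of Lemma~\ref{basic1} give $\|\cdot\|_{\hiso}\le\|\cdot\|_{H^{\eps,\gamma-\eps}(\tor^d)}$. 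It then remains to bound the right-hand side, and here the decisive step is a weight comparison: for every $k\notin\Delta_\eps(\xi)$,
\[
2^{2(\eps\kone+(\gamma-\eps)\kinf)}=2^{2(\alpha\kone+\beta\kinf)}\,2^{-2\left[(\alpha-\eps)\kone-(\gamma-\beta-\eps)\kinf\right]}<2^{-2\xi}\,2^{2(\alpha\kone+\beta\kinf)},
\]
the strict inequality being precisely the negation of membership in $\Delta_\eps(\xi)$ from \eqref{eq101}. Inserting this and using $\sum_k b_k^2\ls\fhab^2$ yields $\|f-Q_{\Delta_\eps(\xi)}f\|_{\hiso}^2\ls 2^{-2\xi}\fhab^2$, with a constant depending only on $\alpha,\beta,\gamma,\eps,d$ through Propositions~\ref{lem:lowernorm} and~\ref{lem:uppernorm}.

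The delicate point, which I expect to be the main obstacle, is the boundary case $\gamma=0$ (the $L_2$-target, forcing $\beta<0$): there the natural intermediate exponents degenerate to $(\eps,-\eps)$, for which $\min\{\eps,\gamma\}=0$ so that Proposition~\ref{lem:uppernorm} no longer applies, while raising the isotropic exponent to restore positivity reintroduces an unbounded factor $2^{c\kinf}$ in the weight comparison. To treat this case I would not pass through an intermediate space but argue directly on dyadic blocks: from $\|g\|_2^2=\sum_\ell\|\delta_\ell(g)\|_2^2$, the support property $\delta_\ell(q_k(f))=0$ unless $\ell\le k+\bar{1}$, and the extracted decay $2^{-(\alpha\kone-(\gamma-\beta)\kinf)}<2^{-\xi}\,2^{-\eps(\kone-\kinf)}$ valid off $\Delta_\eps(\xi)$, one is led to the $\ell^2$-boundedness of an explicit summation kernel, to be verified by a (weighted) Schur test. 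The subtlety is that the crude triangle-inequality bound on $\|\delta_\ell(g)\|_2$ discards the block structure and produces a divergent geometric summation along the direction in which only the maximal coordinate grows; the sharp estimate therefore requires keeping the orthogonality of the $\delta_\ell(g)$ and a finer arrangement of the sum rather than summing $\|q_k(f)\|_2$ naively.
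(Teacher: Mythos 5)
For $\gamma>0$ your argument is correct and takes a genuinely different route from the paper's. The paper bounds $\|f-Q_{\Delta_\eps(\xi)}f\|_{\hiso}$ by the triangle inequality, Bernstein's inequality $\|q_k(f)\|_{\hiso}\le 2^{\gamma\kinf}\|q_k(f)\|_2$ and H\"older, and then spends a full step estimating $\sum_{k\notin\Delta_\eps(\xi)}2^{-2\alpha\kone+2(\gamma-\beta)\kinf}\lesssim 2^{-2\xi}$ by splitting the complement of $\Delta_\eps(\xi)$ according to which coordinate realizes $\kinf$. You instead push the tail through the intermediate hybrid space $H^{\eps,\gamma-\eps}(\tor^d)$ via Proposition~\ref{lem:uppernorm} and extract $2^{-2\xi}$ from a pointwise weight comparison, which is exactly the mechanism the paper itself uses for Theorem~\ref{mixmix}; this buys a shorter proof with no combinatorics on the index set, at the price of needing $\min\{\eps,\gamma\}>0$, i.e.\ of losing uniformity down to $\gamma=0$.

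That loss is the genuine gap: the theorem asserts the bound for all $\gamma\ge 0$, and the case $\gamma=0$ (so $\beta<0$) is not decorative — the paper records it separately as Corollary~\ref{nochwas}. You correctly see that Proposition~\ref{lem:uppernorm} with exponents $(\eps,-\eps)$ is inadmissible and that perturbing the exponents leaves an uncontrolled factor $2^{c\kinf}$, but your substitute (orthogonality of the $\delta_\ell$ plus a weighted Schur test) is only announced, not executed, and your reason for rejecting the direct route is incorrect. The triangle inequality $\|\sum_{k\notin\Delta_\eps(\xi)}q_k(f)\|_{\hiso}\le\sum_{k\notin\Delta_\eps(\xi)}\|q_k(f)\|_{\hiso}$ followed by Bernstein and Cauchy--Schwarz reduces everything to the weight sum above, whose summand is at most $2^{-2(\alpha-(\gamma-\beta))\kone}$ and hence geometrically summable in every direction precisely because $\alpha>\gamma-\beta$; distinguishing whether the constraint $k_i=\kinf$ or the constraint $k\notin\Delta_\eps(\xi)$ is the binding one yields the factor $2^{-2\xi}$ for every $0\le\gamma-\beta<\alpha$, including $\gamma=0$. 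No orthogonality or Schur test is needed. To close your gap, either run this direct estimate in the case $\gamma=0$ (or throughout), or supply the kernel estimate you allude to in full.
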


\begin{proof}
{\em Step 1}. The triangle inequality in $H^{\gamma}(\tor^d)$, Lemma \ref{lem:bernstein}, and afterwards 
H\"older's inequality yield
\beqq
\|f-Q_{\Delta_{\varepsilon}(\xi)}f\|_{\hiso} & = & \Big\|\sum_{k\notin \Delta_\varepsilon(\xi)}q_k(f)\Big\|_{\hiso}
\leq \sum_{k\notin \Delta_\varepsilon(\xi)}\|q_k(f)\|_{\hiso}
\\
&\leq& \sum_{k\notin \Delta_\varepsilon(\xi)}2^{\gamma \kinf}\|q_k(f)\|_{2}\\
&=&\sum_{k\notin \Delta_\varepsilon(\xi)}2^{\alpha \kone+\beta\kinf}
2^{-(\alpha \kone+\beta\kinf)}2^{\gamma \kinf}\|q_k(f)\|_{2}
\\
&\leq & \Big(\sum_{k\notin \Delta_\varepsilon(\xi)}
2^{-2\alpha\kone+2(\gamma-\beta)\kinf}\Big)^{\frac{1}{2}}
\Big(\sum_{k\notin \Delta_\varepsilon(\xi)} \, 2^{2 (\alpha \kone+\beta\kinf)}\, \|q_k(f)\|^2_{2}\Big)^{\frac{1}{2}}.
\eeqq
Applying Theorem \ref{satz:equivnorm} we have
\beqq
\Big(\sum_{k\notin \Delta_\varepsilon(\xi)}2^{2(\alpha \kone+\beta\kinf)}\|q_k(f)\|^2_{2}\Big)^{\frac{1}{2}}\leq \fhab.
\eeqq
Consequently, we obtain the following inequality
\be\label{eq:43333}
\|f-Q_{\Delta_{\varepsilon}(\xi)}f\|_{\hiso}\leq \Big(\sum_{k\notin \Delta_\varepsilon(\xi)}2^{-2\alpha\kone+2(\gamma-\beta)\kinf}\Big)^{\frac{1}{2}} \fhab.
\ee
{\em Step 2.} Now we consider the sum
\beqq
\sum_{k\notin \Delta_\varepsilon(\xi)}2^{-2\alpha\kone+2(\gamma-\beta)\kinf}\leq \sum_{i=1}^d\sum_{\substack{k\notin \Delta_\varepsilon(\xi)\\k\in K_i}}2^{-2\alpha\kone+2(\gamma-\beta)\kinf},
\eeqq
where
\be\label{eq:tka}
K_i:=\{k\in\N^d_0:k_i=\kinf\}\, \qquad i=1, \ldots \, , d.
\ee
We want to find a proper upper bound for 
$$\sum_{\substack{k\notin \Delta_\varepsilon(\xi)\\
k\in K_i}} 2^{-2\alpha\kone+2(\gamma-\beta)\kinf}.
$$ 
For simplicity we restrict ourselves to the case $i=1$ with $k_1=\kinf$ and set
\be \tk:=(k_2,\cdots,k_d).\label{eq:tk}\ee 
Indeed,
$\kone=k_1+|\tk|_1$
holds for all $k\in\N^d_0$.
So the following equivalence is true
\beqq
k\notin \Delta_\varepsilon(\xi)&\Longleftrightarrow& (\alpha-\varepsilon)\kone-((\gamma-\beta)-\varepsilon)k_1>\xi\\
&\Longleftrightarrow&(\alpha-\varepsilon)|\tk|_1+(\alpha-(\gamma-\beta))k_1>\xi\\
&\Longleftrightarrow&k_1>\frac{\xi-(\alpha-\varepsilon)|\tk|_1}{\alpha-(\gamma-\beta)}.
\eeqq
Using this equivalence we can proceed with
\beq
\sum_{\substack{k\notin \Delta_\varepsilon(\xi)\\k\in K_1}}2^{-2\alpha\kone+2(\gamma-\beta)\kinf}
&=& \sum_{\tk\in\mathbb{N}^{d-1}_0}2^{-2\alpha|\tk|_1}
\sum_{k_1>\max \Big\{|\tk|_{\infty}-1,\frac{\xi-(\alpha-\varepsilon)|\tk|_1}{\alpha-(\gamma-\beta)}\Big\}}
2^{-2\alpha k_1+2(\gamma-\beta) k_1}
\nonumber\\
&=&\sum_{\tk\in I_1}2^{-2\alpha|\tk|_1}\sum_{k_1\geq |\tk|_{\infty}}2^{-2\alpha k_1+2(\gamma-\beta) k_1}\label{eq:sum1}\\
&+&\sum_{\tk\notin I_1}2^{-2\alpha|\tk|_1}\sum_{k_1>\frac{\xi-(\alpha-\varepsilon)|\tk|_1}{\alpha-(\gamma-\beta)}}2^{-2\alpha k_1+2(\gamma-\beta) k_1},\label{eq:sum2}
\eeq
where
$$I_1=\{\tk\in\N^{d-1}_0:\;\frac{\xi-(\alpha-\varepsilon)|\tk|_1}{\alpha-(\gamma-\beta)}<|\tk|_{\infty}\}.$$
First we compute an upper bound for the sum in \eqref{eq:sum1}. Because of 
\[
2^{2((\gamma-\beta)-\alpha)|\tk|_\infty}\leq 2^{-2(\xi-[\alpha-\varepsilon])|\tk|_1} \qquad \mbox{if}
\qquad  \tk\in I_1\, , 
\]
we conclude
\beqq
\sum_{\tk\in I_1}\sum_{k_1\geq |\tk|_{\infty}}2^{-2\alpha k_1+2(\gamma-\beta) k_1}
&\leq & 
C \sum_{\tk\in I_1}2^{-2\alpha|\tk|_1}2^{2((\gamma-\beta)-\alpha)|\tk|_\infty}
\\
 & \lesssim &
\sum_{\tk\in I_1}\sum_{k_1\geq |\tk|_{\infty}}2^{-2\alpha k_1+2(\gamma-\beta)  k_1}
\\
&\lesssim&  2^{-2\xi}\sum_{\tk\in I_1}2^{-2\varepsilon|\tk|_1}\\
&\lesssim &  2^{-2\xi} \, .
\eeqq
Here the constant behind $\lesssim$ does not depend on $\xi$. 
\\
{\em Step 2}. Next, we estimate the sum in \eqref{eq:sum2}. Similarly as above we find
\beqq
\sum_{\tk\notin I_1}2^{-2\alpha|\tk|_1}\sum_{k_1>\frac{\xi-(\alpha-\varepsilon)|\tk|_1}{\alpha-(\gamma-\beta)}}2^{-2\alpha k_1+2(\gamma-\beta) k_1}&\lesssim& \sum_{\tk\notin  I_1}2^{-2\alpha|\tk|_1} 2^{-2(\xi-(\alpha-\varepsilon)|\tk|_1}\\
&\lesssim &2^{-2\xi}.
\eeqq
As a consequence we have
$$\sum_{k\notin \Delta_\varepsilon(\xi)}2^{-2\alpha\kone+2(\gamma-\beta)\kinf}\lesssim  2^{-2\xi}.$$
This together with \eqref{eq:43333} proves the claim.
\end{proof}

The previous result includes the case $\gamma = 0$. Let us state this special case seperately.

\begin{cor}\label{nochwas}
Let $\alpha>0$, $\beta<0$ such that $\alpha+\beta>\frac{1}{2}$ and $0<\varepsilon<-\beta<\alpha$. Then there is a constant $C=C(\alpha,\beta,\eps,d)>0$ such that
\be 
\|f-Q_{\Delta_{\varepsilon}(\xi)}f\|_{2}\leq C 2^{-\xi}\|f\|_{\hab}
\ee
holds for all $f\in \hab$ and $\xi>0$. 
\end{cor}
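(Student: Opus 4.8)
The plan is to obtain Corollary~\ref{nochwas} directly from Theorem~\ref{satz:approxhab} by specializing the parameter $\gamma$ to the value $0$. First I would verify that the hypotheses of the corollary imply those of the theorem in this special case. Setting $\gamma=0$, the theorem requires $\alpha>0$, $\gamma=0\geq 0$, $\beta<\gamma=0$ together with $\min\{\alpha,\alpha+\beta\}>\tfrac{1}{2}$, and the range condition $0<\varepsilon<\gamma-\beta<\alpha$. The corollary assumes $\alpha>0$, $\beta<0$ (which is exactly $\beta<\gamma$), $\alpha+\beta>\tfrac12$, and $0<\varepsilon<-\beta<\alpha$. Since $\beta<0$ forces $\alpha+\beta<\alpha$, the condition $\alpha+\beta>\tfrac12$ already yields $\alpha>\tfrac12$, so $\min\{\alpha,\alpha+\beta\}=\alpha+\beta>\tfrac12$ holds automatically. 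Moreover $\gamma-\beta=-\beta$, so the chain $0<\varepsilon<-\beta<\alpha$ is precisely $0<\varepsilon<\gamma-\beta<\alpha$. Thus every hypothesis of the theorem is met.

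Next I would observe that when $\gamma=0$ the target norm $\fiso=\|f\|_{H^{0}(\tor^d)}$ coincides with $\|f\|_2$, since by Definition~\ref{varnorms3} the $H^0$-norm is simply the $L_2$-norm (the weight $(1+|k|_2^2)^{0}\equiv 1$). Likewise the index set $\Delta_{\varepsilon}(\xi)$ from \eqref{f082} specializes, with $\gamma=0$, to exactly the set used in Theorem~\ref{satz:approxhab}, namely $\{k\in\N_0^d:(\alpha-\varepsilon)|k|_1-(-\beta-\varepsilon)|k|_{\infty}\leq\xi\}$, so the sampling operator $Q_{\Delta_{\varepsilon}(\xi)}$ is unchanged. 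Therefore the conclusion of the theorem,
\[
\|f-Q_{\Delta_{\varepsilon}(\xi)}f\|_{H^{0}(\tor^d)}\leq C\,2^{-\xi}\,\fhab,
\]
reads verbatim as
\[
\|f-Q_{\Delta_{\varepsilon}(\xi)}f\|_{2}\leq C\,2^{-\xi}\,\fhab,
\]
which is the assertion of the corollary, with the constant $C=C(\alpha,\beta,0,\varepsilon,d)=C(\alpha,\beta,\varepsilon,d)$.

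Since this is a pure specialization there is no genuine obstacle; the only point deserving care is the bookkeeping of the inequalities governing the admissible parameters, in particular confirming that the corollary's weaker-looking hypothesis $\alpha+\beta>\tfrac12$ indeed recovers the full condition $\min\{\alpha,\alpha+\beta\}>\tfrac12$ required by the theorem. I expect the entire argument to amount to a one-line appeal to Theorem~\ref{satz:approxhab} after identifying $H^{0}(\tor^d)=L_2(\tor^d)$.
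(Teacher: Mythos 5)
Your proposal is correct and matches the paper exactly: the paper introduces this corollary with the remark that Theorem \ref{satz:approxhab} ``includes the case $\gamma=0$'' and states it as that special case, which is precisely your specialization argument including the identification $H^{0}(\tor^d)=L_2(\tor^d)$ and the check that $\beta<0$ forces $\min\{\alpha,\alpha+\beta\}=\alpha+\beta>\tfrac12$.
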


\begin{rem}
 \rm
(i) For the approximation of the embedding $I:H^{\alpha}_{\mix}(\tor^d) \to H^{\gamma}(\tor^d)$, where $\alpha>\gamma>0$, we could have used a 
simpler argument which does not require the sampling representation in Theorem \ref{satz:equivnorm} to estimate $\|f-Q_{\Delta_{\eps}(\xi)}f\|_{H^{\gamma}(\tor^d)}$\,.
In fact, we estimate 
\be\label{666}
  \begin{split}
    \|f-Q_{\Delta_{\eps}}f\|_{H^{\gamma}(\tor^d)} &\leq \Big\|\sum_{k\notin \Delta_\varepsilon(\xi)}q_k(f)\Big\|_{\hiso}
    \leq \sum_{k\notin \Delta_\varepsilon(\xi)}\|q_k(f)\|_{H^{\gamma}(\tor^d)}\\
    &\leq \sum_{k\notin \Delta_\varepsilon(\xi)}2^{\gamma|k|_{\infty}}\|q_k(f)\|_2\,.
  \end{split}
\ee
Due to the tensor product structure of the space $H^{\alpha}_{\mix}(\tor^d)$ we are allowed to use \cite[Lemma\ 5]{SU} to estimate 
$\|q_k(f)\|_2$. Indeed, it holds
\be\nonumber
  \begin{split}
   \|q_k(f)\|_2  &= \|(\eta_{k_1}\otimes ... \otimes \eta_{k_d})f\|_2 \leq \Big(\prod\limits_{j=1}^d \|\eta_{k_j}:H^{\alpha}(\tor) \to L_2(\tor)\|\Big) \|f\|_{H^{\alpha}_{\mix}(\tor^d)}\\
   &\lesssim 2^{-\alpha|k|_1}\|f\|_{H^{\alpha}_{\mix}(\tor^d)}\,.
  \end{split} 
\ee
Putting this into \eqref{666} yields 
$$
\|f-Q_{\Delta_{\eps}}f\|_{H^{\gamma}(\tor^d)} \leq \|f\|_{H^{\alpha}_{\mix}(\tor^d)}\sum\limits_{k\notin \Delta_{\eps}(\xi)}2^{-\alpha|k|_1+\gamma|k|_{\infty}}
$$
With exactly the same method as used in Step 2 of the proof of Theorem \ref{satz:approxhab} we obtain that 
$$
    \sum\limits_{k\notin \Delta_{\eps}(\xi)}2^{-\alpha|k|_1+\gamma|k|_{\infty}} \lesssim 2^{-\xi}\,,
$$
which yields
$$
    \|f-Q_{\Delta_{\eps}}f\|_{H^{\gamma}(\tor^d)}\lesssim 2^{-\xi}\|f\|_{H^{\alpha}_{\mix}(\tor^d)}\,.
$$
(ii) The method from (i) is not suitable if $\gamma = 0$. In fact, it produces a worse bound compared to the one obtained in Theorem \ref{beta02} below, namely
$$
      \|f-Q_{\Delta(\alpha m)}f\|_2 \lesssim 2^{-m\alpha}m^{d-1}\|f\|_{H^{\alpha}_{\mix}(\tor^d)}\,.
$$
This is actually the strategy used in \cite{T85} to obtain \eqref{f01}, see also \cite{BG04}\,.\\
\noindent (iii) Estimates of sampling operators of Smolyak-type with respect to the embeddings 
$I:H^{\alpha}_\mix ([0,1]^d) \to H^\gamma ([0,1]^d)$ may be found also in the papers
\cite{BG99,BG04,Gr,SST08} and the recent one \cite{GH14}. In particular, Bungartz and Griebel have used 
energy-norm based sparse grids in case $\alpha = 2$ and $\gamma =1$. These authors have taken care of the dependence of all constants on 
the dimension $d$, an important  problem in high-dimensional approximation, which we have ignored here.
\end{rem}


\section{Sampling on Smolyak grids}
\label{smolyak}

In this section we intend to apply our new method to situations where the classical Smolyak algorithm is used. 
On the one hand we give shorter proofs for existing results and extend some of them concerning the used approximating operators on the other hand. 

\subsection{The mixed-mixed case}

\label{mixmix2}

We consider sampling operators for functions in $H^{\alpha}_{\mix}(\tor^d)$ measuring the error in $\hmixedga$. The associated operator $Q_{\Delta}$
is this time given by 
\be\label{667}
    \Delta(\xi) = \Delta(\alpha,\gamma;\xi) := \{k\in \N_0^d~:~(\alpha-\gamma)|k|_1 \leq \xi\}\quad,\quad \xi>0\,.
\ee

\begin{satz}\label{mixmix}
Let $\gamma>0$ and $\alpha>\max\{\gamma,1/2\}$. Then there is a constant $C=C(\alpha,\gamma,d)>0$ such that
$$\| f-Q_{\Delta(\xi)}f\|_{\hmixedga}\leq C 2^{-\xi}\fmixed$$
holds for all $f\in\hmixed$ and $\xi>0$.
\end{satz}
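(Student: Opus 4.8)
The plan is to combine the two Nikol'skij-type representations from Section~\ref{sample1}, applied once to the source space $\hmixed = H^{\alpha,0}(\tor^d)$ and once to the target space $\hmixedga = H^{\gamma,0}(\tor^d)$, and then to read off the decay directly from the geometry of the index set \eqref{667}. The decisive point is to keep the $\ell_2$-structure of the block decomposition throughout and \emph{not} to bound the error by the triangle inequality over the blocks $q_k(f)$; the latter would introduce a spurious polynomial factor in $\xi$ of order $\xi^{(d-1)/2}$ coming from counting lattice points on the level sets $\{|k|_1 = n\}$, exactly the kind of loss recorded in the remark following Corollary~\ref{nochwas}.

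First I would invoke Theorem~\ref{satz:equivnorm} with $\beta = 0$, which is admissible since $\alpha > 1/2$. Because $Q_{\Delta(\xi)}f = \sum_{k\in\Delta(\xi)}q_k(f)$ is a finite partial sum, this yields
\[
f - Q_{\Delta(\xi)}f = \sum_{k\notin\Delta(\xi)} q_k(f),
\]
with convergence in $\hmixed$ and hence in $L_2(\tor^d)$, together with the norm equivalence
\[
\sum_{k\in\N_0^d} 2^{2\alpha|k|_1}\,\|q_k(f)\|_2^2 \ \asymp\ \fmixed^2 .
\]

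Next, since $q_k(f)\in\trigpolk$ for every $k$, I would apply Proposition~\ref{lem:uppernorm} to the target space $\hmixedga$, i.e.\ with smoothness parameters $(\gamma,0)$; the hypothesis $\min\{\gamma,\gamma\}>0$ reduces to $\gamma>0$, which holds. Taking the sequence $f_k := q_k(f)$ for $k\notin\Delta(\xi)$ and $f_k := 0$ otherwise (its weighted sum is finite because $\gamma<\alpha$, and it converges in $L_2$ to $f - Q_{\Delta(\xi)}f$ by the previous step), the proposition gives
\[
\|f - Q_{\Delta(\xi)}f\|_{\hmixedga}^2 \ \lesssim\ \sum_{k\notin\Delta(\xi)} 2^{2\gamma|k|_1}\,\|q_k(f)\|_2^2 .
\]

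The final step is purely geometric. Writing $2^{2\gamma|k|_1} = 2^{-2(\alpha-\gamma)|k|_1}\,2^{2\alpha|k|_1}$ and using that $k\notin\Delta(\xi)$ forces $(\alpha-\gamma)|k|_1 > \xi$, where $\alpha>\gamma$ guarantees $\alpha-\gamma>0$, one obtains $2^{-2(\alpha-\gamma)|k|_1} < 2^{-2\xi}$ uniformly in $k$. Pulling this factor out and enlarging the summation back to all of $\N_0^d$ gives
\[
\|f - Q_{\Delta(\xi)}f\|_{\hmixedga}^2 \ \lesssim\ 2^{-2\xi}\sum_{k\in\N_0^d} 2^{2\alpha|k|_1}\,\|q_k(f)\|_2^2 \ \lesssim\ 2^{-2\xi}\,\fmixed^2 ,
\]
and taking square roots completes the argument. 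The main obstacle — and the reason a clean, logarithm-free rate $2^{-\xi}$ holds — is precisely this replacement of the crude triangle inequality by the $\ell_2$-summability supplied by Propositions~\ref{lem:lowernorm} and~\ref{lem:uppernorm}: the weight gap $\alpha-\gamma$ on the set $\N_0^d\setminus\Delta(\xi)$ then converts directly into the factor $2^{-\xi}$, with no combinatorial count over level sets.
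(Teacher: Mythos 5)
Your proof is correct and follows essentially the same route as the paper's: both apply Proposition~\ref{lem:uppernorm} in the target space $\hmixedga$ to the truncated sequence $f_k = q_k(f)\cdot\mathbf{1}_{k\notin\Delta(\xi)}$, factor the weight as $2^{2\gamma|k|_1} = 2^{2(\gamma-\alpha)|k|_1}2^{2\alpha|k|_1}$, use $(\alpha-\gamma)|k|_1>\xi$ off $\Delta(\xi)$ to extract $2^{-2\xi}$, and close with the sampling representation of Theorem~\ref{satz:equivnorm} for $\hmixed$. Your surrounding remarks about avoiding the triangle inequality over blocks are accurate commentary but not an additional argument.
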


\begin{proof}
We employ Proposition\ \ref{lem:uppernorm} to $H^{\gamma}_{\mix}(\tor^d)$ with the sequence $(f_k)_{k\in \N_0^d}$ given by 
$$
    f_k = \left\{\begin{array}{rcl}
              q_k(f)&:&k\notin \Delta(\xi),\\
              0&:&k\in \Delta(\xi)\,.
          \end{array}\right.
$$
Note, that the only restriction for Proposition\ \ref{lem:uppernorm} is $\gamma>0$\,. Clearly,
$f-Q_{\Delta(\xi)}f = \sum\limits_{k\in \N_0^d} f_k$ and hence 
\beqq
\|f-Q_{\Delta(\xi)}f\|^2_{\hmixedga}&\lesssim&\sum_{k \in \N_0^d}2^{2\gamma|k|_1}\|f_k\|^2_2\\
&=& \sum_{k\notin \Delta(\xi)}2^{2(\gamma-\alpha)\kone}2^{2\alpha\kone}\|q_k(f)\|^2_2\\
&\leq& 2^{-2\xi}\sum_{k\in \N_0^d}2^{2\alpha\kone}\|q_{k}(f)\|^2_2.
\eeqq
Applying Theorem \ref{satz:equivnorm} (here we need $\alpha>1/2$) completes the proof since
\beqq
\sum_{k\in \N_0^d}2^{2\alpha\kone}\|q_{k}(f)\|^2_2&\lesssim&
\fmixed^2.
\eeqq
\end{proof}

As a direct consequence of Theorem \ref{mixmix}, we obtain the following result for the weaker error norm $\|\cdot\|_{H^{\gamma}(\tor^d)}$

\begin{cor}
Let $\alpha>\frac{1}{2}$ and $0<\gamma<\alpha$. Then there is a constant $C=C(\alpha,\gamma,d)>0$ such that
\be 
\|f-Q_{\Delta(\xi)}f\|_{\hiso}\leq C 2^{-\xi}\|f\|_{\hmixed}
\ee
holds for all $f\in \hmixed$ and $\xi>0$. 
\end{cor}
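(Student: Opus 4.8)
The plan is to deduce this immediately from Theorem \ref{mixmix} by exploiting the fact that the isotropic norm on $\hiso$ is dominated by the mixed norm on $\hmixedga$. Concretely, I would first record the continuous embedding $\hmixedga \hookrightarrow \hiso$, valid for every $\gamma \ge 0$. This follows directly from the Fourier-analytic norms in Definitions \ref{varnorms} and \ref{varnorms3} together with the elementary weight comparison
\[
(1+|k|_2^2)^{\gamma} \;\le\; \prod_{j=1}^d \big(1+|k_j|^2\big)^{\gamma}, \qquad k \in \Z,
\]
which holds because $1+|k|_2^2 = 1 + \sum_{j=1}^d k_j^2 \le \prod_{j=1}^d (1+k_j^2)$ and $\gamma \ge 0$. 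This embedding is in fact already noted in the remark following Definition \ref{varnorms3}.

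Next I would apply this embedding to the function $g := f - Q_{\Delta(\xi)}f$, giving
\[
\|f - Q_{\Delta(\xi)}f\|_{\hiso} \;\ls\; \|f - Q_{\Delta(\xi)}f\|_{\hmixedga}.
\]
It then remains only to check that the hypotheses of Theorem \ref{mixmix} are met. The assumptions $\alpha > \frac{1}{2}$ and $0 < \gamma < \alpha$ of the corollary are precisely equivalent to $\gamma > 0$ and $\alpha > \max\{\gamma,\tfrac{1}{2}\}$, so Theorem \ref{mixmix} applies verbatim with the very same index set $\Delta(\xi)$ from \eqref{667} and yields
\[
\|f - Q_{\Delta(\xi)}f\|_{\hmixedga} \;\le\; C\, 2^{-\xi}\, \fmixed.
\]
Chaining the two inequalities produces the claimed bound with a new constant $C = C(\alpha,\gamma,d)$.

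There is essentially no obstacle here: the statement is a routine weakening of Theorem \ref{mixmix}, obtained by passing from the mixed target norm to the weaker isotropic one. The only point requiring a moment's care is the verification that the parameter ranges in the corollary coincide with those in Theorem \ref{mixmix}, and that the \emph{same} operator $Q_{\Delta(\xi)}$ (hence the same sampling points) can be reused without modification --- both of which are immediate.
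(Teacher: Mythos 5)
Your argument is exactly the one the paper intends: the corollary is stated as a direct consequence of Theorem \ref{mixmix}, obtained by composing with the embedding $\hmixedga\hookrightarrow\hiso$, which follows from the weight comparison $(1+|k|_2^2)^{\gamma}\le\prod_{j=1}^d(1+|k_j|^2)^{\gamma}$ for $\gamma\ge 0$. The parameter check and the reuse of the same index set $\Delta(\xi)$ from \eqref{667} are both correct, so the proposal is sound and matches the paper's route.
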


\begin{rem}
 \rm
Sampling with Smolyak operators has some history. Closest to us are  
Temlyakov \cite{T85,T93,T93b} and D\~ung \cite{Di90}-\cite{Di13}, see also \cite{si02}, \cite{si06} and \cite{SU}.
In almost all contributions preference was given to situations where the target space was $L_q (\tor^d)$. Let us also 
refer to the recent preprint \cite{GH14}.
\end{rem}



\subsection{The case $\alpha>\gamma-\beta= 0$}


Now we are interested in the embedding
$$
    I:H^{\alpha,\beta}(\tor^d) \to H^{\beta}(\tor^d)\,.
$$
The sampling operator $Q_{\Delta(\xi)}$ is determined by $\Delta(\xi)$ from \eqref{f012b}\,. Let us simplify the structure by considering the
index sets $\Delta(\alpha m)$ for $m\in \N$ which consists of all $k\in \N_0^d$ satisfying $|k|_1 \leq m$.

\begin{satz}\label{beta02}
Let $\beta=\gamma\geq 0$  and  $\alpha>\frac{1}{2}$. Then there is a constant $C=C(\alpha,\beta,d)>0$ such that
$$\| f-Q_{\Delta(\alpha m)}f\|_{H^\beta(\tor^d)}\leq C 2^{-m\alpha}m^{\frac{d-1}{2}}\fhab$$
holds for all $f\in\hab$ and $m\in\N$.
\end{satz}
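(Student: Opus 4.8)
The plan is to exploit the sampling representation from Theorem \ref{satz:equivnorm}. Under our hypotheses ($\alpha>\frac{1}{2}$ and $\beta=\gamma\geq 0$, so $\alpha+\beta\geq\alpha>\frac12$) the condition $\min\{\alpha,\alpha+\beta\}>\frac{1}{2}$ holds, hence every $f\in\hab$ admits the unconditionally convergent expansion $f=\sum_{k\in\N_0^d}q_k(f)$, and by \eqref{f012b} the index set is $\Delta(\alpha m)=\{k\in\N_0^d:\kone\leq m\}$. Thus the approximation error is the tail
$$ f-Q_{\Delta(\alpha m)}f=\sum_{\kone>m}q_k(f), $$
and it only remains to estimate this in the $H^\beta(\tor^d)$-norm.

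First I would apply the triangle inequality in $H^\beta(\tor^d)$ together with the isotropic Bernstein inequality $\|q_k(f)\|_{H^\beta(\tor^d)}\lesssim 2^{\beta\kinf}\|q_k(f)\|_2$, which is valid because $q_k(f)\in\trigpolk$ has spectrum supported on frequencies of modulus $\lesssim 2^{\kinf}$ (this is exactly the estimate already used in the proof of Theorem \ref{satz:approxhab}). This gives
$$ \|f-Q_{\Delta(\alpha m)}f\|_{H^\beta(\tor^d)}\leq\sum_{\kone>m}2^{\beta\kinf}\|q_k(f)\|_2=\sum_{\kone>m}2^{-\alpha\kone}\cdot 2^{\alpha\kone+\beta\kinf}\|q_k(f)\|_2. $$
Then the Cauchy--Schwarz inequality separates the weight $2^{-\alpha\kone}$ from the energy factor, the latter being controlled by $\fhab$ via Theorem \ref{satz:equivnorm}:
$$ \|f-Q_{\Delta(\alpha m)}f\|_{H^\beta(\tor^d)}\leq\Big(\sum_{\kone>m}2^{-2\alpha\kone}\Big)^{1/2}\Big(\sum_{\kone>m}2^{2(\alpha\kone+\beta\kinf)}\|q_k(f)\|_2^2\Big)^{1/2}\lesssim\Big(\sum_{\kone>m}2^{-2\alpha\kone}\Big)^{1/2}\fhab. $$

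The decisive step is the remaining purely combinatorial sum, and it is where the factor $m^{(d-1)/2}$ is generated. Grouping indices by the value $n=\kone$ and using $\#\{k\in\N_0^d:\kone=n\}=\binom{n+d-1}{d-1}\asymp n^{d-1}$, I would bound
$$ \sum_{\kone>m}2^{-2\alpha\kone}=\sum_{n>m}\binom{n+d-1}{d-1}2^{-2\alpha n}\asymp\sum_{n>m}n^{d-1}2^{-2\alpha n}\lesssim m^{d-1}2^{-2\alpha m}, $$
where the last estimate holds because $\alpha>0$ makes the series geometric, so it is comparable to its leading term. Taking square roots produces $m^{(d-1)/2}2^{-\alpha m}$, and combining with the previous display yields the claim. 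There is no genuine obstacle in proving this upper bound; the only point requiring care is the counting estimate, and it is precisely the square root of the polynomial count $n^{d-1}$ (forced by the $L_2$ Cauchy--Schwarz step) that produces the exponent $(d-1)/2$ rather than the full power $d-1$ one would get from a cruder $\ell_1$-type estimate. The real difficulty in this critical regime $\beta=\gamma$ lies not here but in matching this bound with a lower bound, which the logarithmic gap in the introduction reflects.
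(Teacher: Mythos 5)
Your proof is correct and follows essentially the same route as the paper: triangle inequality plus the Bernstein-type estimate $\|q_k(f)\|_{H^\beta}\lesssim 2^{\beta|k|_\infty}\|q_k(f)\|_2$, then Cauchy--Schwarz against the weight $2^{\alpha|k|_1+\beta|k|_\infty}$, with Theorem \ref{satz:equivnorm} controlling the energy factor. The only cosmetic difference is in the tail sum $\sum_{|k|_1>m}2^{-2\alpha|k|_1}$, which you handle by directly counting $\#\{k:|k|_1=n\}\asymp n^{d-1}$ rather than via the split into $|k|_\infty\le m$ and $|k|_\infty>m$ used in the paper's Lemma \ref{lem:smolsum}; both give $m^{d-1}2^{-2\alpha m}$.
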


\begin{proof}
We proceed as in proof of Theorem \ref{satz:approxhab}.
The triangle inequality in $H^{\beta}(\tor^d)$ yields
\[
\|f-Q_{\Delta(\alpha m)}f\|_{\hisobeta}=\Big\|\sum_{k\notin \Delta(\alpha m)}q_k(f)\Big\|_{\hisobeta}
\leq\sum_{k\notin \Delta(\alpha m)}\|q_k(f)\|_{\hisobeta}\, .
\]
Applying Lemma \ref{lem:bernstein} gives
\beqq
\|f-Q_{\Delta(\alpha m)}f\|_{\hisobeta}&\lesssim& \sum_{k\notin \Delta(\alpha m)}2^{\beta\kinf}\|q_k(f)\|_{2}.
\eeqq
Proceeding with H\"older's inequality leads to
$$
\|f-Q_{\Delta(\alpha m)}f\|_{2}\leq \Big(\sum_{\kone>m}2^{-2\alpha\kone}\Big)^{\frac{1}{2}}\Big(\sum_{\kone>m}2^{2(\alpha\kone+\beta\kinf)}\|q_k(f)\|^2_2\Big)^{\frac{1}{2}}.
$$
Employing the upcoming lemma and Theorem \ref{satz:equivnorm} finishes the proof.

\end{proof}

\begin{lem}\label{lem:smolsum}
Let $\alpha>0$. Then
\be
\sum_{\kone>m}2^{-2\alpha\kone} \lesssim m^{d-1}2^{-2\alpha m} \label{eq:sum8}
\ee
holds for all $m>0$.
\end{lem}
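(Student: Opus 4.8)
The plan is to collapse the $d$-dimensional lattice sum into a one-dimensional series over the value of $\kone$, and then estimate a scalar tail of the form $\sum_{n>m} n^{d-1}2^{-2\alpha n}$. First I would group the summands according to $n:=\kone$. The number of $k\in\N_0^d$ with $\kone=n$ equals $\binom{n+d-1}{d-1}$, a polynomial in $n$ of degree $d-1$, so that $\binom{n+d-1}{d-1}\le C_d\,n^{d-1}$ for all $n\ge 1$ with a constant $C_d$ depending only on $d$. Since the lemma is applied for $m\in\N$, I would restrict to $m\ge 1$ (for $d\ge 2$ the stated bound cannot hold as $m\to 0^+$, because the left-hand side tends to a positive constant while $m^{d-1}2^{-2\alpha m}\to 0$). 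This grouping gives
\[
\sum_{\kone>m}2^{-2\alpha\kone}=\sum_{n>m}\binom{n+d-1}{d-1}2^{-2\alpha n}\lesssim\sum_{n>m}n^{d-1}2^{-2\alpha n},
\]
so it remains to estimate the right-hand scalar tail.

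For the scalar tail I would let $n_0$ denote the smallest integer strictly larger than $m$, so that $m<n_0\le m+1$ and the sum runs over $n\ge n_0$. Writing $n=n_0+i$ with $i\ge 0$ and factoring out the leading exponential yields
\[
\sum_{n\ge n_0}n^{d-1}2^{-2\alpha n}=2^{-2\alpha n_0}\sum_{i\ge 0}(n_0+i)^{d-1}2^{-2\alpha i}.
\]
Using the elementary inequality $(n_0+i)^{d-1}\lesssim n_0^{d-1}+i^{d-1}$ (valid for all $d\ge 1$ with a $d$-dependent constant) together with the convergence of the series $\sum_{i\ge 0}2^{-2\alpha i}$ and $\sum_{i\ge 0}i^{d-1}2^{-2\alpha i}$, both finite because $\alpha>0$ forces the exponential decay to dominate the polynomial factor, the inner sum is bounded by a constant multiple of $n_0^{d-1}+1\lesssim n_0^{d-1}$, with a constant that does not depend on $m$.

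Finally I would translate $n_0$ back to $m$: since $m<n_0$ we have $2^{-2\alpha n_0}\le 2^{-2\alpha m}$, and since $m\ge 1$ we have $n_0\le m+1\le 2m$, whence $n_0^{d-1}\le(2m)^{d-1}\lesssim m^{d-1}$. Combining the three displays gives
\[
\sum_{\kone>m}2^{-2\alpha\kone}\lesssim 2^{-2\alpha n_0}n_0^{d-1}\lesssim m^{d-1}2^{-2\alpha m},
\]
which is the assertion. The one point genuinely worth watching — and thus the main (if mild) obstacle — is ensuring the implied constant is independent of $m$: this is secured precisely by pulling out the factor $2^{-2\alpha n_0}$ before summing, so that all $m$-dependence is carried by the explicit prefactor $2^{-2\alpha n_0}n_0^{d-1}$, while the residual series $\sum_i i^{d-1}2^{-2\alpha i}$ contributes only an $m$-free constant.
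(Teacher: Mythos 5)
Your proof is correct, but it takes a different route from the paper's. You collapse the lattice sum to a one--dimensional tail by grouping according to the level $n=\kone$ and counting the lattice points on each level set via $\#\{k\in\N_0^d:\kone=n\}=\binom{n+d-1}{d-1}\lesssim n^{d-1}$, after which everything reduces to the scalar estimate $\sum_{n>m}n^{d-1}2^{-2\alpha n}\lesssim m^{d-1}2^{-2\alpha m}$. The paper instead splits the index set into $\{\kone>m,\ \kinf\le m\}$ and $\{\kinf>m\}$: the second piece is handled by a union bound over which coordinate exceeds $m$ (giving pure exponential decay), and the first by brute force, bounding the $d-1$ free coordinates by $m$ each and summing a geometric series in the remaining one, which is where the factor $(m+1)^{d-1}$ appears. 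Your argument is more economical and makes the source of the polynomial factor $m^{d-1}$ transparent (it is the cardinality of a level set of $|\cdot|_1$), while the paper's avoids any combinatorial identity at the cost of a two-case split. You also rightly flag that the inequality cannot hold with a uniform constant for all $m>0$ when $d\ge 2$ (the left side is bounded below as $m\to 0^+$ while $m^{d-1}2^{-2\alpha m}\to 0$); the paper's own proof has the same implicit restriction in the step $(m+1)^{d-1}\lesssim m^{d-1}$, and since the lemma is only ever applied with $m\in\N$ this is harmless.
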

\begin{proof}This lemma is well known. Let us prove it for completeness.
We decompose the sum in the following two parts
\be\sum_{\kone>m}2^{-2\alpha\kone}= \sum_{\substack{\kone>m\\\kinf\leq m}}2^{-2\alpha\kone}+\sum_{\kinf>m}2^{-2\alpha\kone}.\label{eq:sum3}\ee
First we compute an upper bound for the second sum in \eqref{eq:sum3}. 
Again we use the convention for $\tk$ of $k$ from \eqref{eq:tk} and decompose as follows
\beqq 
\sum_{\kinf>m}2^{-2\alpha\kone} &\leq & \sum_{i=1}^d \sum_{\substack{k_i>m\\k\in \N_0^d}}2^{-2\alpha\kone}
=d\sum_{\tk\in \N_0^{d-1}}2^{-2\alpha|\tk|_1}\sum_{k_1>m}2^{-\alpha k_1}\\
&\lesssim&2^{-\alpha m}.
\eeqq
The first sum in \eqref{eq:sum3} gives
\beqq
\sum_{\substack{\kinf\leq m\\\kone >m}}2^{-2\alpha\kone}
&\leq&\sum_{k_2=0}^m\ldots\sum_{k_d=0}^m\sum_{k_1=m-|\tk|_1}^{\infty}2^{-2\alpha\kone}
\\
&\lesssim& (m+1)^{d-1} 2^{-2\alpha m}.
\eeqq
Consequently,
\beq
\sum_{\kone>m}2^{-2\alpha\kone} &\lesssim& m^{d-1}2^{-2\alpha m} 
\eeq
holds for all $m>0$.
\end{proof}


\subsection{The case $\gamma= 0$}
\label{beta00}


From Theorem \ref{beta02} we immediately obtain the special case ($\gamma = \beta = 0$) 
$$\| f-Q_{\Delta(\alpha m)}f\|_{2}\leq C 2^{-m\alpha}m^{\frac{d-1}{2}}\|f\|_{H^{\alpha}_{\mix}(\tor^d)}\quad,\quad m\in \N\,,$$
compare with \cite{si02}, \cite{SU}. With our methods we can additionally show an error bound for $L_{\infty}(\tor^d)$ instead of $L_2(\tor^d)$. 

\begin{satz}\label{tem}
Let $\alpha>\frac{1}{2}$. Then there is a constant $C=C(\alpha,d)>0$ such that
$$\| f-Q_{\Delta(\alpha m)}f\|_{\infty}\leq C 2^{-m(\alpha-\frac{1}{2})}m^{\frac{d-1}{2}}\fmixed$$
holds for all $f\in\hmixed$ and $m\in\N$.
\end{satz}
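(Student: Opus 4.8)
The plan is to mimic the proof of Theorem \ref{beta02}, but to replace the $L_2$-estimate on each dyadic building block by an $L_\infty$-estimate obtained from Nikol'skij's inequality; this is precisely the step where half a unit of smoothness is lost. Recall that $\Delta(\alpha m)$ from \eqref{f012b} with $\xi = \alpha m$ is just $\{k\in\N_0^d : |k|_1\le m\}$, so by the sampling representation (Theorem \ref{satz:equivnorm}, applied with $\beta=0$, which is where $\alpha>\frac12$ is used) we may write $f-Q_{\Delta(\alpha m)}f = \sum_{|k|_1>m} q_k(f)$ with unconditional convergence in $\hmixed\hookrightarrow C(\tor^d)$. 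The triangle inequality in $L_\infty(\tor^d)$ then gives $\|f-Q_{\Delta(\alpha m)}f\|_\infty \le \sum_{|k|_1>m}\|q_k(f)\|_\infty$.

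Next I would exploit that $q_k(f)\in\trigpolk$ and apply Lemma \ref{lem:bernsteinnikolskij} with $p=2$ and $q=\infty$, which yields $\|q_k(f)\|_\infty \lesssim 2^{|k|_1/2}\|q_k(f)\|_2$ for every $k\in\N_0^d$. Inserting this bound and splitting the resulting weight as $2^{|k|_1/2} = 2^{-(\alpha-1/2)|k|_1}\cdot 2^{\alpha|k|_1}$, an application of H\"older's inequality produces
\[
\|f-Q_{\Delta(\alpha m)}f\|_\infty \lesssim \Big(\sum_{|k|_1>m}2^{-2(\alpha-1/2)|k|_1}\Big)^{\frac12}\Big(\sum_{|k|_1>m}2^{2\alpha|k|_1}\|q_k(f)\|_2^2\Big)^{\frac12}.
\]

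The second factor is bounded by $\fmixed^+$ and hence, by the norm equivalence in Theorem \ref{satz:equivnorm}, by $\fmixed$ up to a constant. For the first factor I would invoke Lemma \ref{lem:smolsum} with $\alpha$ replaced by $\alpha-\frac12>0$, giving $\sum_{|k|_1>m}2^{-2(\alpha-1/2)|k|_1}\lesssim m^{d-1}2^{-2(\alpha-1/2)m}$, so that the first factor is $\lesssim m^{(d-1)/2}2^{-(\alpha-1/2)m}$. Combining the two estimates yields the asserted bound.

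The computation is routine once these tools are assembled; the only conceptual point — and the reason the decay rate drops from $\alpha$ (as in Theorem \ref{beta02}) to $\alpha-\frac12$, and the extra factor $m^{(d-1)/2}$ survives — is the use of Nikol'skij's inequality to convert the $L_2$-norm of each block into its $L_\infty$-norm at the cost of the factor $2^{|k|_1/2}$. Everything else parallels the proof of Theorem \ref{beta02} essentially verbatim, so I do not expect any genuine obstacle beyond correctly tracking the shifted summability exponent $\alpha-\frac12$.
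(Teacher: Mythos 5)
Your proposal is correct and coincides with the paper's own argument: the paper likewise applies the triangle inequality in $L_\infty$, uses Nikol'skij's inequality to pass from $\|q_k(f)\|_\infty$ to $2^{|k|_1/2}\|q_k(f)\|_2$, inserts the weight $2^{\alpha|k|_1}2^{-\alpha|k|_1}$, applies H\"older, and then concludes with Lemma \ref{lem:smolsum} (for the exponent $\alpha-\tfrac12$) and Theorem \ref{satz:equivnorm}. No discrepancies.
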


\begin{proof} As above with Lemma \ref{lem:bernsteinnikolskij} we conclude
\beq\label{eq:upbound5}
\|f-Q_{\Delta(\alpha m)}f\|_{\infty}&=& \Big\|\sum_{k\notin \Delta(\alpha m)}q_k(f)\Big\|_{\infty}
 \leq \sum_{k\notin \Delta(\alpha m)}\|q_k(f)\|_{\infty}
\nonumber
 \\
&\leq&\sum_{\kone>m}2^{\kone/2}2^{\alpha\kone}2^{-\alpha\kone}\|q_k(f)\|_2
\nonumber
\\
& \le & \Big(\sum_{\kone>m} 2^{-2\kone(\alpha-\frac{1}{2})}\Big)^{\frac{1}{2}}
\Big(\sum_{\kone>m}2^{2\alpha\kone}\|q_k(f)\|^2_2\Big)^{\frac{1}{2}}.
\eeq
Applying Lemma \ref{lem:smolsum} and Theorem \ref{satz:equivnorm} proves the claim.
\end{proof}

Now we turn to the case $2 < q < \infty$. The following result allows for comparing the present situation with the results in Subsection \ref{mixmix2}.
\begin{lem}\label{lem:qineq}
Let $2 < q< \infty$. Then
$$\|f\|_q\lesssim \Big(\sum_{k\in\N_0^d}\|\delta_k(f)\|^2_q\Big)^{1/2} \lesssim \Big(\sum_{k\in\N_0^d}2^{2|k|_1(1/2-1/q)}\|\delta_k(f)\|^2_2\Big)^{1/2}=\|f\|_{H^{\frac{1}{2}-\frac{1}{q}}_{\mix}(\tor^d)}$$ 
holds true for any $f\in L_q(\tor^d)$, where the right-hand side may be infinite. 
\end{lem}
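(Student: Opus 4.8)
The plan is to establish the two displayed inequalities separately and to read off the concluding equality from the Littlewood--Paley description of the mixed Sobolev norm. Indeed, that equality is nothing but Lemma~\ref{basic1}(i) applied with mixed smoothness $\alpha=\frac12-\frac1q>0$ and isotropic smoothness $\beta=0$, since then $\|f\|_{H^{1/2-1/q}_{\mix}(\tor^d)}^2\asymp\sum_{k\in\N_0^d}2^{2|k|_1(1/2-1/q)}\|\delta_k(f)\|_2^2$. Throughout one may assume the right-hand sides are finite, as otherwise the estimates are trivial.

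For the first inequality I would invoke the classical (tensor-product) Littlewood--Paley square-function equivalence
\[
\|f\|_q\asymp\Big\|\Big(\sum_{k\in\N_0^d}|\delta_k(f)|^2\Big)^{1/2}\Big\|_q,\qquad 1<q<\infty,
\]
which holds because each $\delta_k$ is a tensor product of one-dimensional dyadic frequency projections and the univariate theorem tensorizes (see \cite{ST87}). Only the upper estimate for $\|f\|_q$ is needed. Since $q\geq 2$ we have $q/2\geq 1$, so the triangle inequality for the $L_{q/2}$-norm applied to the nonnegative functions $|\delta_k(f)|^2$ gives
\[
\Big\|\Big(\sum_{k\in\N_0^d}|\delta_k(f)|^2\Big)^{1/2}\Big\|_q
=\Big\|\sum_{k\in\N_0^d}|\delta_k(f)|^2\Big\|_{q/2}^{1/2}
\leq\Big(\sum_{k\in\N_0^d}\big\||\delta_k(f)|^2\big\|_{q/2}\Big)^{1/2}
=\Big(\sum_{k\in\N_0^d}\|\delta_k(f)\|_q^2\Big)^{1/2},
\]
which is precisely the first inequality.

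For the second inequality I would apply Nikol'skij's inequality (Lemma~\ref{lem:bernsteinnikolskij}) blockwise. Since $\delta_k(f)\in\trigpolk$, the choice $p=2$ there yields $\|\delta_k(f)\|_q\leq C\,2^{|k|_1(1/2-1/q)}\|\delta_k(f)\|_2$ with $C$ independent of $k$ and $f$. Squaring and summing over $k\in\N_0^d$ then gives $\sum_k\|\delta_k(f)\|_q^2\leq C^2\sum_k 2^{2|k|_1(1/2-1/q)}\|\delta_k(f)\|_2^2$, and taking square roots completes the chain of inequalities.

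The main obstacle is the first inequality: the second is a one-line consequence of Nikol'skij's inequality and the concluding equality is definitional. The genuine ingredient is the Littlewood--Paley square-function characterization in the periodic tensor-product setting, valid for $1<q<\infty$; once that is in hand, the passage to the $\ell_2$-sum of blockwise $L_q$-norms rests only on Minkowski's inequality, which is available exactly because $q\geq 2$ (so that $q/2\geq 1$).
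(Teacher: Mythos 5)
Your proposal is correct and follows essentially the same route as the paper: the first inequality via the tensor-product Littlewood--Paley square-function equivalence combined with the triangle inequality in $L_{q/2}$ (using $q/2\geq 1$), and the second via a blockwise application of Nikol'skij's inequality from Lemma \ref{lem:bernsteinnikolskij} with $p=2$. The only cosmetic remark is that the final ``equality'' is really the equivalent Littlewood--Paley norm of $H^{1/2-1/q}_{\mix}(\tor^d)$ from Lemma \ref{basic1}, as you correctly note with the symbol $\asymp$.
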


\begin{proof}
 \rm
The proof of the first relation in Lemma \ref{lem:qineq} is elementary using the Littlewood-Paley decomposition in $L_q(\tor^d)$ together with $q/2\geq 1$, see for instance \cite[Theorem\ 0.3.2, Page 20]{T93b}. The second relation follows by an application of Nikol'skij's inequality in Lemma \ref{lem:bernsteinnikolskij}. 
\end{proof}
\begin{rem}
Let us mention that Lemma \ref{lem:qineq} can be 
refined to 
$$
    \|f\|_q  \lesssim \Big(\sum_{k\in\N_0^d}2^{q|k|_1(1/2-1/q)}\|\delta_k(f)\|^q_2\Big)^{1/q}\,.
$$
For this deep result we refer to \cite[Lemma\ II.2.1]{T93b} and to \cite[Lemma\ 5.3]{Di11} as well as \cite[Lemma\ 1]{SS04} for non-periodic versions. In a more general context this embedding is a special case of a Jawerth/Franke type embedding, see \cite{HaVy09}.
\end{rem}


\section{Sampling numbers}
\label{samp}


In this section we will restate the approximation results from Sections \ref{sample2} and \ref{smolyak} in terms of the 
number of degrees of freedom. We additionally show the asymptotic optimality with regard to sampling numbers of the sampling operators considered in Sections \ref{sample2} and \ref{smolyak}. This
requires estimates of the rank of the corresponding sampling operators. A lower bound for the rank is deduced from the 
fact that the respective sampling operators reproduce trigonometric polynomials from modified hyperbolic crosses 
$\mathcal{H}_{\Delta}$. Recall that our approximation scheme is based on the classical trigonometric interpolation. 
We have used several times the fact that the operator $I_m$ defined in \eqref{f015} reproduces univariate 
trigonometric polynomials of degree less than or equal to $m$.
What concerns the operator $Q_\Delta$ in \eqref{f06} we can prove the following general reproduction result. 

\begin{lem}\label{lem:trigreprod} Let $\Delta\subset \N_0^d$ be a solid finite set meaning that  
$k\in \Delta$ and $\ell \leq k$ implies $\ell \in \Delta$. Then
$Q_{\Delta}$ reproduces trigonometric polynomials with frequencies in 
\begin{equation}\label{hy}
\mathcal{H}_{\Delta}:=\bigcup_{k\in \Delta}\mathcal{P}_k\,,
\end{equation}
where $\mathcal{P}_k$ is defined in \eqref{f014}.
\end{lem}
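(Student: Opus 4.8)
The plan is to test $Q_\Delta$ on single exponentials and to combine the cancellation property of Lemma \ref{lem:cancell} with the tensor-product telescoping of the operators $q_k$. Since $Q_\Delta$ is linear and every trigonometric polynomial with frequencies in $\mathcal{H}_\Delta$ is a finite linear combination of the functions $e^{inx}$ with $n\in\mathcal{H}_\Delta$, it suffices to show that $Q_\Delta(e^{in\cdot})=e^{in\cdot}$ for each such fixed $n$.

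So I would first fix $n\in\mathcal{H}_\Delta$ and choose $m\in\Delta$ with $n\in\mathcal{P}_m$. By the definition of the blocks $P_j$ this means $|n_j|<2^{m_j}$ for every $j$ (with $n_j=0$ whenever $m_j=0$); in particular $e^{in\cdot}\in\mathcal{T}^m$. Applying Lemma \ref{lem:cancell} with $k=m$ then shows $q_\ell(e^{in\cdot})=0$ as soon as $m_j<\ell_j$ for some coordinate $j$, so only indices $\ell\leq m$ (componentwise) contribute and $Q_\Delta(e^{in\cdot})=\sum_{\ell\in\Delta,\,\ell\leq m}q_\ell(e^{in\cdot})$.

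Here the solidity of $\Delta$ enters decisively: because $m\in\Delta$, every $\ell\leq m$ already belongs to $\Delta$, so the truncated index set is exactly the full coordinate box $\{\ell\in\N_0^d:\ell\leq m\}$. On this box the sum factorizes and telescopes coordinatewise, since $q_\ell=\eta_{\ell_1}\otimes\cdots\otimes\eta_{\ell_d}$ and $\sum_{j=0}^{M}\eta_j=I_{2^M}$; thus $\sum_{\ell\leq m}q_\ell=I_{2^{m_1}}\otimes\cdots\otimes I_{2^{m_d}}$. Finally, as $|n_j|<2^{m_j}$, each univariate factor reproduces $e^{in_j\cdot}$ (the operator $I_{2^{m_j}}$ reproduces univariate trigonometric polynomials of degree at most $2^{m_j}$), and therefore the tensor product reproduces $e^{in\cdot}$, as required.

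I do not anticipate a genuine difficulty: the content of the lemma is precisely that solidity converts the set of surviving indices into a coordinate box, after which the elementary telescoping identity does everything. The only point I would treat with a little care is the degenerate coordinates $m_j=0$, where $P_0=\{0\}$ forces $n_j=0$ and $\eta_0=I_1=I_{2^0}$, so that the telescoping and the reproduction $I_{2^{m_j}}(e^{in_j\cdot})=e^{in_j\cdot}$ still hold verbatim in that coordinate.
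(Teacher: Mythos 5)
Your proof is correct and follows essentially the same route as the paper: both arguments rest on the cancellation property of Lemma \ref{lem:cancell}, the telescoping identity $\sum_{j=0}^{M}\eta_j=I_{2^M}$, the univariate reproduction property of $I_m$, and solidity of $\Delta$. The only difference is organizational — the paper writes $Q_\Delta=T-R$ with $T$ the full box operator and shows $R$ annihilates each exponential, whereas you discard the indices $\ell\not\leq m$ directly and use solidity to identify the surviving set with the coordinate box $\{\ell\leq m\}$ — which is an equally valid (and arguably slightly more direct) bookkeeping of the same idea.
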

\begin{proof} We follow the arguments in the proof of \cite[Lemma\ 1]{SU}. By the fact that $|\Delta|<\infty$ we find a $m\geq 0$ such that $$\Delta \subset \{0,\ldots,m\}^d.$$
Let $$T:=\sum_{|k|_{\infty}\leq m}\bigotimes_{i=1}^d \eta_{k_i}\quad\mbox{and}\quad R:=\sum_{\substack{k\notin \Delta\\|k|_{\infty}\leq m}}\bigotimes_{i=1}^d \eta_{k_i}.$$
Of course, it holds
$$Q_{\Delta}=T-R\,.$$
Since
$$\sum_{k=0}^{m}\eta_k=I_{2^{m}}$$ 
we obtain
$$T=\bigotimes_{i=1}^d I_{2^{m}}.$$
Obviously, for $\ell \in \mathcal{H}_{\Delta}$ the univariate reproduction property yields 
$$(Te^{i\ell\cdot})(x)=\prod_{j=1}^d(I_{2^m}e^{i\ell_j\cdot})(x_j)=e^{i\ell x}$$
for all $x\in \tor^d$. It remains to prove $Re^{i\ell\cdot}\equiv 0$. Let $k=(k_1,\ldots,k_d)\in \N_0^d$ such that $k\notin \Delta$. Due to $\ell\in H_{\Delta}$ there exists $u\in \Delta$ with $|\ell_i|\leq 2^{u_i}$ for all  $i=1,\ldots,d$.
The solidity property of $\Delta$ yields the existence of $j\in\{1,\ldots,d\}$ with
$$u_j<k_j.$$
This gives $$|\ell_j|\leq 2^{u_j}\leq 2^{k_j-1}<2^{k_j}.$$
Finally, by the univariate reproduction property, we obtain
$$\eta_{k_j}(e^{i\ell_j \cdot}) = (I_{2^{k_j}}-I_{2^{k_j-1}})e^{i\ell_j \cdot}=0.$$
\end{proof}
\noindent The previous result immediately implies the relation
$$
    \rank Q_{\Delta} \geq \sum\limits_{k\in \Delta} 2^{|k|_1}
$$
if $\Delta\subset \N_0^d$ is solid.
\begin{lem}Let $\alpha>0$, $\gamma \geq 0$ and $\beta<\gamma$ such that $0<\gamma-\beta\leq \alpha$
\begin{enumerate}
\item The index sets $\Delta(\alpha,\beta,\gamma;\xi)$ defined in \eqref{f08} and 
$\Delta(\eps, \alpha,\beta,\gamma;\xi)$ defined in \eqref{f082} are solid sets in the sense of Lemma\ \ref{lem:trigreprod} for every $\xi> 0$.

\item The index set  $\Delta(\alpha; \xi)$ defined in \eqref{f012b} is a solid set for every $\xi>0$.
\end{enumerate}
\end{lem}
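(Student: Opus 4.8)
The plan is to reduce solidity of each of the three index sets to a single monotonicity statement. Each set has the form $\Delta = \{k \in \N_0^d : \phi(k) \leq \xi\}$ with a weight of the type $\phi(k) = a|k|_1 - b|k|_\infty$. Since solidity in the sense of Lemma~\ref{lem:trigreprod} asks that $k \in \Delta$ and $\ell \leq k$ force $\ell \in \Delta$, it suffices to show that $\phi$ is nondecreasing along the componentwise order, i.e. $\ell \leq k \Rightarrow \phi(\ell) \leq \phi(k)$; then $\phi(\ell) \leq \phi(k) \leq \xi$ places $\ell$ in $\Delta$. So the whole lemma collapses to one monotonicity principle applied three times.

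First I would record the elementary estimate that for $\ell, k \in \N_0^d$ with $\ell \leq k$ one has
$$0 \leq |k|_\infty - |\ell|_\infty \leq |k|_1 - |\ell|_1.$$
The left inequality is immediate; for the right one, choosing $i_0$ with $k_{i_0} = |k|_\infty$ gives $|\ell|_\infty \geq \ell_{i_0}$, hence $|k|_\infty - |\ell|_\infty \leq k_{i_0} - \ell_{i_0} \leq \sum_{i=1}^d (k_i - \ell_i) = |k|_1 - |\ell|_1$, the last step using $k_i \geq \ell_i$ for every $i$. From this I would derive the monotonicity principle: if $a \geq 0$ and $a \geq b$, then $\phi(k) = a|k|_1 - b|k|_\infty$ is nondecreasing. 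Indeed $\phi(k) - \phi(\ell) = a(|k|_1 - |\ell|_1) - b(|k|_\infty - |\ell|_\infty)$; if $b \leq 0$ both summands are nonnegative, while if $0 \leq b \leq a$ the displayed estimate yields $\phi(k) - \phi(\ell) \geq (a-b)(|k|_1 - |\ell|_1) \geq 0$. Either way $\phi(\ell) \leq \phi(k)$.

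It then remains to apply this with the three weights. For \eqref{f08} take $a = \alpha \geq 0$, $b = \gamma - \beta$; the hypothesis $0 < \gamma - \beta \leq \alpha$ gives $a \geq b$. For \eqref{f082} take $a = \alpha - \eps$, $b = \gamma - \beta - \eps$; for admissible $\eps$ (in particular $0 < \eps < \alpha$) we have $a > 0$, and $a - b = \alpha - (\gamma - \beta) \geq 0$. For \eqref{f012b} take $a = \alpha > 0$, $b = 0$, which is the trivial case (monotonicity of $|\cdot|_1$). I do not expect any serious obstacle here; the only point requiring care is that I argue monotonicity \emph{directly} rather than by quoting Lemma~\ref{lem:monotonicity}, whose hypothesis $\min\{\alpha,\alpha+\beta\}>0$ translates into the strict condition $\alpha > \gamma - \beta$, whereas the present lemma admits the boundary case $\gamma - \beta = \alpha$. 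The computation above stays valid at that boundary (the bound simply degenerates to $0$), which is precisely why the elementary route is the right one.
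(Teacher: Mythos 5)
Your proof is correct, and it follows the same basic strategy as the paper: reduce solidity to monotonicity of the weight $k\mapsto a|k|_1-b|k|_\infty$ along the componentwise order. The paper gets this monotonicity by citing Lemma~\ref{lem:monotonicity} (with $\beta$ there replaced by $-(\gamma-\beta)$), whereas you reprove it directly from the inequality $0\le |k|_\infty-|\ell|_\infty\le |k|_1-|\ell|_1$.

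The one substantive point in your favour is the boundary case: Lemma~\ref{lem:monotonicity} requires $\min\{\alpha,\alpha+\beta\}>0$, which after the substitution reads $\alpha>\gamma-\beta$ strictly, while the lemma being proved admits $\gamma-\beta=\alpha$. The paper's one-line appeal to Lemma~\ref{lem:monotonicity} therefore does not literally cover that endpoint (though its proof degenerates harmlessly there, with $\eps=0$), and your elementary argument closes this small gap cleanly. You also treat $\Delta_\eps(\xi)$ from \eqref{f082} explicitly, which the paper's proof leaves implicit. Both proofs are fine; yours is marginally more careful.
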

\begin{proof}
The second result is trivial. We prove the first one. Let
$$\psi(k):=\alpha\kone-(\gamma-\beta)\kinf.$$
The set $\Delta(\xi)$ consists of all $k\in\N_0^d$ with $\psi(k)\leq \xi$. Applying Lemma \ref{lem:monotonicity} yields
$$\psi(k')\leq \psi(k)\leq \xi$$
for all $k'\leq k\in \Delta(\xi)$. That means all the $k'$ also belong to $\Delta(\xi)$.
\end{proof}

\begin{rem}
 Hyperbolic crosses $\mathcal{H}_{\Delta(\xi)}$ (with $\Delta(\xi)$ from \eqref{f08} and \eqref{f012b}) in the 2-plane:\\
\begin{minipage}[hbt]{7.5cm}
\begin{flushleft}
\begin{figure}[H]
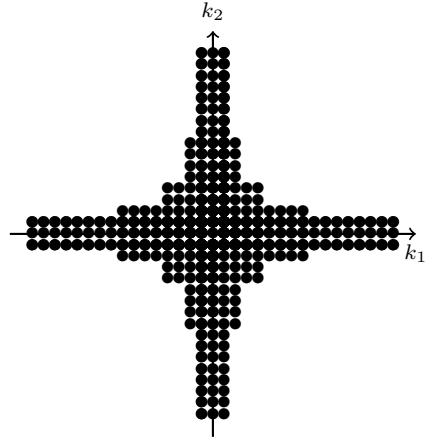
\label{fig:3}
\centering

\caption{$\alpha=1,\;\xi=4$}
\label{fig_Squares}
\end{figure}
\end{flushright}
\end{minipage}
  \\\noindent Comparing Figure 3 and 4 shows that energy norm based hyperbolic crosses contain ``mostly'' anisotropic building blocks than its classical (Smolyak) counterpart.
\end{rem}
In the next lemma we give sharp estimates for $\sum_{k\in \Delta (\xi)}2^{\kone}$ with $\Delta(\xi)$
from \eqref{f08}. 

\begin{lem}\label{rank1}
Let $\alpha>0$, $\gamma\geq 0$, $\beta\in \re$ such that $\gamma>\beta$ and $\alpha>\gamma-\beta$. Then
$$\sum_{k\in \Delta (\xi)}2^{\kone}\asymp 2^{\frac{\xi}{\alpha-(\gamma-\beta)}}$$
holds for all $\xi\geq\alpha-(\gamma-\beta)$, where the constants behind ``$\asymp$'' only depend on $\alpha$, $\gamma-\beta$, and $d$\,.
\end{lem}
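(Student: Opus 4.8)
The plan is to establish the two matching bounds separately, in both cases reducing the $d$-dimensional sum to a one-dimensional geometric series by splitting according to which coordinate realises the maximum --- exactly the device already used in Step~2 of the proof of Theorem~\ref{satz:approxhab}. Throughout I abbreviate $\mu:=\gamma-\beta$, so that the hypotheses become $0<\mu<\alpha$, and in particular $\alpha-\mu>0$. The key structural observation is that for $k$ with $k_1=|k|_\infty$, writing $\tk:=(k_2,\dots,k_d)$ as in \eqref{eq:tk}, the defining inequality of $\Delta(\xi)$ unwinds to
\[
\alpha|k|_1-\mu|k|_\infty\le \xi
\quad\Longleftrightarrow\quad
(\alpha-\mu)k_1+\alpha|\tk|_1\le \xi
\quad\Longleftrightarrow\quad
k_1\le \frac{\xi-\alpha|\tk|_1}{\alpha-\mu}\,,
\]
which is the same rewriting appearing before \eqref{eq:sum1}.

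For the lower bound I would simply restrict the sum to the points on the first coordinate axis, i.e.\ $k=(k_1,0,\dots,0)$. For such $k$ one has $|k|_\infty=k_1$ and the membership condition reduces to $k_1\le \xi/(\alpha-\mu)$, so all these points lie in $\Delta(\xi)$. Summing the resulting geometric series gives
\[
\sum_{k\in\Delta(\xi)}2^{|k|_1}\ \ge\ \sum_{k_1=0}^{\lfloor \xi/(\alpha-\mu)\rfloor}2^{k_1}\ \ge\ 2^{\lfloor \xi/(\alpha-\mu)\rfloor}\ \ge\ \tfrac12\,2^{\xi/(\alpha-\mu)}\,,
\]
where the restriction $\xi\ge\alpha-\mu$ guarantees that the range of summation is non-empty and that the last inequality is valid.

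For the upper bound I would cover $\Delta(\xi)$ by the sets $K_i:=\{k\in\N_0^d:k_i=|k|_\infty\}$ from \eqref{eq:tka}, so that $\sum_{k\in\Delta(\xi)}2^{|k|_1}\le\sum_{i=1}^d\sum_{k\in\Delta(\xi)\cap K_i}2^{|k|_1}$, and by symmetry it suffices to bound the term $i=1$. Using the equivalence above, relaxing the lower constraint $k_1\ge|\tk|_\infty$ to $k_1\ge0$, and summing the geometric series in $k_1$ (which is dominated by twice its largest term $2^{(\xi-\alpha|\tk|_1)/(\alpha-\mu)}$), I obtain
\[
\sum_{k\in\Delta(\xi)\cap K_1}2^{|k|_1}\ \lesssim\ 2^{\xi/(\alpha-\mu)}\sum_{\tk\in\N_0^{d-1}}2^{\,|\tk|_1(1-\frac{\alpha}{\alpha-\mu})}\,.
\]
The exponent equals $-\frac{\mu}{\alpha-\mu}|\tk|_1<0$, so the $\tk$-sum is a convergent geometric series whose value is a finite constant depending only on $\alpha$, $\mu$ and $d$. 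Multiplying by the factor $d$ from the union bound then yields $\sum_{k\in\Delta(\xi)}2^{|k|_1}\lesssim 2^{\xi/(\alpha-\mu)}$, completing the argument.

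The only point requiring genuine care --- and the place where both hypotheses are consumed --- is the convergence of the tail sum over $\tk$: the exponent $1-\alpha/(\alpha-\mu)$ is negative precisely because $\mu>0$ (this is where $\gamma>\beta$ enters), while the rewriting of the constraint as an upper bound for $k_1$ requires $\alpha-\mu>0$ (this is where $\alpha>\gamma-\beta$ enters). I expect no combinatorial subtleties beyond this, since relaxing the max-coordinate constraint $k_1\ge|\tk|_\infty$ in the upper bound only enlarges the sum and leaves the dominant geometric term unchanged, and the $\lesssim$-constants are manifestly independent of $\xi$.
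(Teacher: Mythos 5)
Your proof is correct and follows essentially the same route as the paper: the upper bound via the splitting into the sets $K_i$, the rewriting $k\in\Delta(\xi)\Leftrightarrow k_1\le(\xi-\alpha|\tk|_1)/(\alpha-(\gamma-\beta))$, and the convergent geometric tail in $\tk$ with negative exponent $-(\gamma-\beta)/(\alpha-(\gamma-\beta))$; the lower bound via the extremal point $\lfloor\xi/(\alpha-(\gamma-\beta))\rfloor\,e_1$ on the first coordinate axis. The only cosmetic differences are that you relax the constraint $k_1\ge|\tk|_\infty$ rather than carrying it along, and you sum the whole axis in the lower bound where the paper uses only the single largest term — neither changes the argument.
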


\begin{proof}
{\em Step 1.} First we deal with the upper bound. 
We are going to use the same notation as in \eqref{eq:tka} and \eqref{eq:tk}.
We obtain the following inequality
\[
\sum_{k\in \Delta(\xi)} \, 2^{\kone} \leq 
\sum_{i=1}^d \sum_{k\in K_i \cap \Delta(\xi)}\, 2^{\kone}.
\] 
By symmetry it will be enough to deal with $i=1$.
Hence
\beqq
\sum_{k\in \Delta(\xi)} 2^{\kone} &\leq& d \, 
\sum_{k\in K_1 \cap \Delta(\xi)}\, 2^{\kone}.
\eeqq 
Now we want to decompose the summation over k. Since  $k_1\geq |k|_\infty$ we find
\beqq
k\in \Delta(\xi) &\Longleftrightarrow& 
\alpha \kone- (\gamma-\beta) k_1 \leq \xi\\
&\Longleftrightarrow&\alpha (|\tk|_1+k_1)- (\gamma-\beta) k_1 \leq \xi\\
&\Longleftrightarrow& k_1 \leq \frac{\xi-\alpha|\tk|_1}{\alpha-(\gamma-\beta)} \, .
\eeqq
This implies 
\beqq
|\tk|_\infty \leq \frac{\xi-\alpha|\tk|_1}{\alpha-(\gamma-\beta)} &\Longleftrightarrow& \alpha |\tk|_1+
(\alpha-(\gamma-\beta)) |\tk|_{\infty}\leq\xi.
\eeqq
We shall use these inequalities to produce an appropriate  
decomposition of $K_1 \cap \Delta(\xi)$ which results in
\beqq
\sum_{k\in \Delta(\xi)}2^{\kone} &\leq& d 
\sum_{\substack{\tk \in \N_0^{d-1}\\ \alpha |\tk|_1+
(\alpha-(\gamma-\beta)) |\tk|_{\infty}\leq\xi}} 2^{|\tk|_1}
\sum_{k_1=|\tk|_\infty}^{\frac{\xi-\alpha|\tk|_1}{\alpha-(\gamma-\beta)}}2^{k_1}
\\
&\lesssim & 2^{\frac{\xi}{\alpha-(\gamma-\beta)}}
\sum_{\alpha |\tk|_1+(\alpha-(\gamma-\beta)) 
|\tk|_{\infty}\leq\xi} 2^{\frac{-\tilde{\alpha}}{\alpha-(\gamma-\beta)}|\tk|_1}
\\
& \lesssim &2^{\frac{\xi}{\alpha-(\gamma-\beta)}}\, , 
\eeqq
since $\alpha/(\alpha-(\gamma-\beta)>0$.
\\
{\em Step 2}. We prove the lower bound. First we claim  that 
$$k^*:=\Big\lfloor\frac{\xi}{\alpha-(\gamma-\beta)}\Big\rfloor(1,0,\cdots,0)\in 
\Delta(\xi).$$ 
Indeed, 
\beqq 
k^* \in \Delta(\xi)& \Longleftrightarrow & 
\alpha|k^*|_1-(\gamma-\beta)|k^*|_{\infty}\leq \xi\\
&\Longleftrightarrow& 
(\alpha-\varepsilon)\Big\lfloor\frac{\xi}{\alpha-(\gamma-\beta)}\Big\rfloor-
((\gamma-\beta)-\varepsilon)\Big\lfloor\frac{\xi}{\alpha-(\gamma-\beta)}\Big\rfloor\leq \xi
\\
&\Longleftrightarrow& 
(\alpha-(\gamma-\beta))\Big\lfloor\frac{\xi}{\alpha-(\gamma-\beta)}\Big\rfloor\leq \xi.
\eeqq
Obviously, the last inequality is true. Consequently 
\beqq
\sum_{k\in \Delta(\xi)}2^{\kone} \geq 
2^{|k^*|_1}=2^{\Big\lfloor\frac{\xi}{\alpha-(\gamma-\beta)}\Big\rfloor}
\geq 2^{\frac{\xi}{\alpha-(\gamma-\beta)}-1}.
\eeqq
The proof is complete.
\end{proof}

\begin{cor}\label{cor1}
Let $\alpha>0$, $\gamma\geq 0$, $\beta\in \re$ such that $\gamma>\beta$ and $\alpha>\gamma-\beta$. Let further  $\Delta(\xi)$ as in \eqref{f08}.
\\
{\rm (i)} The sampling operator 
$Q_{\Delta(\xi)}$ uses at most $C2^{\frac{\xi}{\alpha-(\gamma-\beta)}}$ function values, where the constant $C>0$ only depends on $\alpha$, $\gamma-\beta$ and $d$\,.
\\
{\rm (ii)} The rank of  the linear  operator 
$Q_{\Delta(\xi)}$ satisfies 
\[ 
\rank Q_{\Delta(\xi)} \asymp 2^{\frac{\xi}{\alpha-(\gamma-\beta)}}\, , \qquad 
\xi\geq\alpha-(\gamma-\beta)\, ,
\]
where the constants behind ``$\asymp$'' only depend on $\alpha$, $\gamma-\beta$, and $d$\,.
\end{cor}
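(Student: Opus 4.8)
The plan is to deduce everything from the counting estimate of Lemma~\ref{rank1}, namely $\sum_{k\in\Delta(\xi)}2^{\kone}\asymp 2^{\xi/(\alpha-(\gamma-\beta))}$, together with the reproduction property of Lemma~\ref{lem:trigreprod} and the fact (just established) that $\Delta(\xi)$ is solid. No genuinely new combinatorics is required; both parts reduce to that single asymptotic.

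First I would prove (i). The operator $Q_{\Delta(\xi)}=\sum_{k\in\Delta(\xi)}q_k$ evaluates $f$ at the union of the tensor-product grids attached to the $q_k=\eta_{k_1}\otimes\cdots\otimes\eta_{k_d}$. A single factor $\eta_{k_j}=I_{2^{k_j}}-I_{2^{k_j-1}}$ (for $k_j\ge 1$) uses the two equidistant grids of $I_{2^{k_j}}$ and $I_{2^{k_j-1}}$, hence at most $(2\cdot 2^{k_j}+1)+(2\cdot 2^{k_j-1}+1)\le c\,2^{k_j}$ points, while $\eta_0=I_1$ uses three. Therefore $q_k$ uses at most $c^{\,d}\,2^{\kone}$ points, and bounding the size of a union by the sum of the sizes gives
\[
\#\bigl\{\text{points used by }Q_{\Delta(\xi)}\bigr\}\ \le\ \sum_{k\in\Delta(\xi)}c^{\,d}\,2^{\kone}\ \lesssim\ 2^{\frac{\xi}{\alpha-(\gamma-\beta)}},
\]
the last inequality being the upper bound of Lemma~\ref{rank1} (and trivially $O(1)$ when $\xi<\alpha-(\gamma-\beta)$). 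The dependence of the constant on $\alpha$, $\gamma-\beta$ and $d$ is inherited from that lemma, which proves~(i).

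For (ii) the upper bound is immediate, since a sampling operator using $m$ function values has rank at most $m$; thus $\rank Q_{\Delta(\xi)}\le\#\{\text{points}\}\lesssim 2^{\xi/(\alpha-(\gamma-\beta))}$ by~(i). For the matching lower bound I would invoke Lemma~\ref{lem:trigreprod}: as $\Delta(\xi)$ is solid, $Q_{\Delta(\xi)}$ reproduces every trigonometric polynomial with frequencies in $\mathcal{H}_{\Delta(\xi)}=\bigcup_{k\in\Delta(\xi)}\mathcal{P}_k$, so its range contains this space and
\[
\rank Q_{\Delta(\xi)}\ \ge\ \bigl|\mathcal{H}_{\Delta(\xi)}\bigr|\ =\ \sum_{k\in\Delta(\xi)}|\mathcal{P}_k|\ =\ \sum_{k\in\Delta(\xi)}2^{\kone},
\]
where the blocks $\mathcal{P}_k$ are pairwise disjoint and $|\mathcal{P}_k|=\prod_{j=1}^d|P_{k_j}|=2^{\kone}$ because $|P_j|=2^{j}$ for $j\ge 1$ and $|P_0|=1$. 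The lower bound of Lemma~\ref{rank1}, which is exactly why the hypothesis $\xi\ge\alpha-(\gamma-\beta)$ appears, then yields $\rank Q_{\Delta(\xi)}\gtrsim 2^{\xi/(\alpha-(\gamma-\beta))}$, and combining the two estimates proves~(ii).

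The only step demanding a little care is the point count in~(i): the two equidistant grids underlying $\eta_{k_j}=I_{2^{k_j}}-I_{2^{k_j-1}}$ are \emph{not} nested, since the moduli $2\cdot 2^{k_j}+1$ and $2\cdot 2^{k_j-1}+1$ are distinct, so one cannot simply re-use the coarser grid. However, the crude bound ``$\le$ sum of the two grid sizes $\lesssim 2^{k_j}$'' costs only a dimension-dependent constant, which is harmless for the asymptotic order. Everything else is a direct bookkeeping reduction to Lemma~\ref{rank1}.
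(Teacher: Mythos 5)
Your proof is correct and follows essentially the same route as the paper: the point count for $q_k$ via the sizes of the interpolation grids of $I_{2^{k_j}}$ and $I_{2^{k_j-1}}$, the summation over $\Delta(\xi)$ controlled by Lemma~\ref{rank1} for part (i), and the reproduction property of Lemma~\ref{lem:trigreprod} combined with the lower bound of Lemma~\ref{rank1} for part (ii). Your write-up merely makes explicit two details the paper leaves implicit (that $\rank Q_{\Delta(\xi)}$ is bounded above by the number of sample points, and that $|\mathcal{H}_{\Delta(\xi)}|=\sum_{k\in\Delta(\xi)}2^{|k|_1}$ by disjointness of the dyadic blocks), both of which are correct.
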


\begin{proof}
 Clearly, $I_m f$ uses $2m+1$ values of  function $f$, hence $\eta_m f$ is using $\le 2^{m+2}$ function values.
This implies that 
$q_k f$ applies $\le 2^{2d}\, 2^{|k|_1}$ function values. As a consequence of Lemma \ref{rank1}
we find that $Q_{\Delta(\xi)} f$ is using
\[
\lesssim \sum_{k\in \Delta(\xi)}2^{\kone}\asymp 2^{\frac{\xi}{\alpha-(\gamma-\beta)}}
\]
function values of $f$. Part (ii) follows from Lemma \ref{lem:trigreprod} and the lower bound in Lemma \ref{rank1}.
\end{proof}
Let us now count the degree of freedom for a classical Smolyak grid.

\begin{lem}\label{rank2}
For any $d\in \N$ and $m\in\N_0^d$, we have the inequality
$$\Big(\frac{m+d-1}{d-1}\Big)^{d-1}2^m \leq \sum_{|k|_1 \leq m}2^{\kone}\leq \Big[\frac{e(m+d-1)}{d-1}\Big]^{d-1}2^{m+1}.$$
\end{lem}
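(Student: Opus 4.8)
The plan is to pass from the lattice-point sum to a weighted sum of binomial coefficients and then estimate that sum by elementary two-sided bounds. First I would sort the summands according to the value of $\kone$: by the standard ``stars and bars'' count, the number of $k\in\N_0^d$ with $\kone=j$ equals $\binom{j+d-1}{d-1}$, so that
\[
\sum_{\kone\le m}2^{\kone}=\sum_{j=0}^m 2^j\binom{j+d-1}{d-1}.
\]
This reduces everything to understanding the single-index sum on the right, whose dominant contribution comes from the top level $j=m$.

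For the lower bound I would simply retain the top term $j=m$ and discard the rest (all summands are nonnegative), and then invoke the elementary inequality $\binom{n}{k}\ge (n/k)^k$ with $n=m+d-1$ and $k=d-1$. This gives
\[
\sum_{j=0}^m 2^j\binom{j+d-1}{d-1}\ge 2^m\binom{m+d-1}{d-1}\ge 2^m\Big(\frac{m+d-1}{d-1}\Big)^{d-1},
\]
which is exactly the claimed lower estimate. For the upper bound I would use that $j\mapsto\binom{j+d-1}{d-1}$ is increasing in $j$, so every factor is dominated by its value at $j=m$; pulling this constant out leaves the geometric series $\sum_{j=0}^m 2^j=2^{m+1}-1\le 2^{m+1}$. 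Combining this with the companion bound $\binom{n}{k}\le (en/k)^k$ yields
\[
\sum_{j=0}^m 2^j\binom{j+d-1}{d-1}\le 2^{m+1}\binom{m+d-1}{d-1}\le 2^{m+1}\Big(\frac{e(m+d-1)}{d-1}\Big)^{d-1},
\]
completing the proof.

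There is no genuine obstacle in this argument; it is a routine combinatorial estimate. The only point that requires a little care is matching the precise constants demanded by the statement, which is what forces the use of the sharp two-sided estimates $(n/k)^k\le\binom{n}{k}\le (en/k)^k$ (the latter following from $k!\ge (k/e)^k$) rather than cruder Stirling-type bounds. Everything else is bookkeeping, and both directions hinge on the single observation that the sum is, up to the geometric weight, governed entirely by the top-level binomial coefficient $\binom{m+d-1}{d-1}$.
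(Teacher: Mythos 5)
Your proof is correct and follows essentially the same route as the paper: the paper simply cites \cite[Lemma 3.10]{DiUl13} for the two-sided bound of $\sum_{|k|_1\le m}2^{|k|_1}$ in terms of $\binom{m+d-1}{d-1}$ and then applies the same estimates $(N/n)^n\le\binom{N}{n}\le(eN/n)^n$ that you use. Your level-by-level decomposition via stars and bars, keeping the top term for the lower bound and using monotonicity plus the geometric series for the upper bound, just makes that cited combinatorial step self-contained.
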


\begin{proof} This assertion is a direct consequence of \cite[Lemma\ 3.10]{DiUl13} together with the well-known relation 
$$\Big(\frac{N}{n}\Big)^n\leq \binom{N}{n}\leq \Big(\frac{eN}{n}\Big)^n\,.$$
\end{proof}

\begin{cor}\label{cor2}
Let $m\in \N$ and 
$$
    \Delta = \{k\in \N_0^d~:~|k|_1 \leq m\}\,.
$$
{\rm (i)}  The sampling operator $Q_{\Delta}$ is using at most $C m^{d-1}\, 2^{m}$ function values, where $C$ decays super-exponentially in $d$.
\\
{\rm (ii)} The rank of  the linear  operator 
$Q_{\Delta}$ satisfies 
\[ 
\rank Q_{\Delta} \asymp m^{d-1}\, 2^{m}\, , \qquad 
m \in \N\, .
\]
\end{cor}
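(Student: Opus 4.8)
The plan is to read off both assertions from the sharp combinatorial bounds in Lemma~\ref{rank2}, combined with the node-counting and reproduction arguments already employed for Corollary~\ref{cor1}. The key preliminary observation is that $\Delta=\{k\in\N_0^d:|k|_1\le m\}$ is \emph{solid} in the sense of Lemma~\ref{lem:trigreprod}: if $k\in\Delta$ and $\ell\le k$ componentwise, then $|\ell|_1\le|k|_1\le m$, so $\ell\in\Delta$. This makes both Lemma~\ref{lem:trigreprod} and the rank lower bound stated in the remark following it applicable.

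For part~(i) I would recall from the proof of Corollary~\ref{cor1} that $I_{2^j}$ uses $2^{j+1}+1$ nodes, so $\eta_j$ uses at most $2^{j+2}$ nodes, and therefore $q_k=\eta_{k_1}\otimes\cdots\otimes\eta_{k_d}$ uses at most $2^{2d}2^{|k|_1}$ function values. Summing over $\Delta$, the number of nodes of $Q_\Delta$ is bounded by the left-hand side of
\[
2^{2d}\sum_{|k|_1\le m}2^{|k|_1}\ \le\ 2^{2d+1}\Big[\tfrac{e(m+d-1)}{d-1}\Big]^{d-1}2^{m},
\]
where the inequality is the upper bound of Lemma~\ref{rank2}. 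Isolating the factor $m^{d-1}2^m$ (using $(m+d-1)^{d-1}\asymp m^{d-1}$ for fixed $d$) leaves the dimension-dependent constant $C\asymp(4e)^d/(d-1)^{d-1}$. Since $(d-1)^{d-1}\ge(d-1)!$ grows faster than any $c^{d}$ while the numerator is only exponential in $d$, this $C$ decays super-exponentially, which is precisely the claim.

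For part~(ii) both bounds are short. For the upper bound one notes that a sampling operator factors through evaluation at its nodes, so its rank is at most the number of function values, and part~(i) already yields $\rank Q_\Delta\ls m^{d-1}2^m$. For the lower bound, solidity lets me apply Lemma~\ref{lem:trigreprod}: $Q_\Delta$ reproduces every trigonometric polynomial with frequencies in $\mathcal H_\Delta=\bigcup_{k\in\Delta}\mathcal P_k$. Because the dyadic blocks $\mathcal P_k$ from \eqref{f014} are pairwise disjoint and $|\mathcal P_k|=2^{|k|_1}$, the reproduced space has dimension $|\mathcal H_\Delta|=\sum_{k\in\Delta}2^{|k|_1}$, and an operator acting as the identity on an $N$-dimensional subspace has rank at least $N$; hence $\rank Q_\Delta\ge\sum_{|k|_1\le m}2^{|k|_1}$. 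The lower bound of Lemma~\ref{rank2} then gives $\rank Q_\Delta\gtrsim m^{d-1}2^m$, and together with the upper bound this proves $\rank Q_\Delta\asymp m^{d-1}2^m$.

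The only genuinely delicate point is the bookkeeping in part~(i): one must weigh the exponential factor $(4e)^d$ against the factorially growing $(d-1)^{d-1}$ coming from Lemma~\ref{rank2} (a Stirling comparison), and restrict attention to the asymptotic regime $m\gtrsim d$ so that $(m+d-1)^{d-1}$ may be replaced by $m^{d-1}$ without spoiling the super-exponential decay of $C$. Everything else is a direct transcription of the preceding lemmas, so I do not expect a serious obstacle.
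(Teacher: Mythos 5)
Your argument is correct and follows essentially the same route as the paper: part (i) from the node count $2^{2d}2^{|k|_1}$ per $q_k$ together with the upper bound of Lemma~\ref{rank2}, and part (ii) from the reproduction property of Lemma~\ref{lem:trigreprod} (via solidity of $\Delta$) combined with the lower bound of Lemma~\ref{rank2}. Your additional bookkeeping on the super-exponential decay of $C$ and the remark that the rank is trivially bounded above by the number of nodes merely make explicit what the paper leaves implicit.
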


\begin{proof}
Part (i) follows from  the fact that $q_k(f)$ uses $2^{2d}2^{|k|_1}$ function values for any $k$ together with the upper bound in Lemma \ref{rank2}. The second assertion can be derived by using the reproduction properties of 
$Q_{\Delta}$, see Lemma \ref{lem:trigreprod}, and the lower bound in Lemma \ref{rank2}.
\end{proof}
\begin{rem}For $d=2$ the sampling grids of $Q_{\Delta(\xi)}$ as in \eqref{f08} and \eqref{f012b} look like:\\
\begin{minipage}[hbt]{7cm}
\begin{flushleft}
\begin{figure}[H]
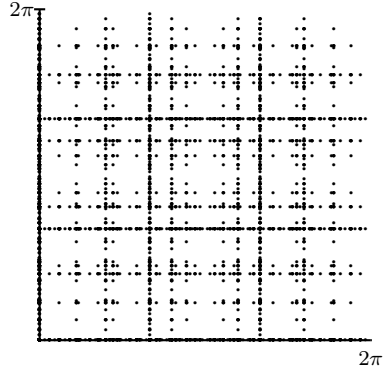

\centering

\caption{$\alpha=1,\;\xi=5$}
\label{fig_Squares}
\end{figure}
\end{flushright}
\end{minipage}
\end{rem}
\noindent These figures show that the point sets of $Q_{\Delta(\xi)}$ have a lot of internal structure. However, they are far from being uniformly distributed within $\T$.\\ \\
\noindent Now we are in position to formulate our results in terms of sampling numbers.

\begin{satz}
 \label{main4}
 Let $\alpha, \beta \, , \gamma\,  \in \re$ 
 such that $\min\{\alpha, \alpha + \beta\} >1/2$, $\gamma \ge 0$ and $0 <  \gamma - \beta < \alpha$.
Then it holds
\be\label{ws-11}
g_m(I_1:\hab \to H^\gamma (\tor^d)) \asymp 
a_m(I_1:\hab \to H^\gamma (\tor^d)) \asymp 
m^{-(\alpha-\gamma + \beta)}  \, , \qquad m \ge 1\, .  
\ee
\end{satz}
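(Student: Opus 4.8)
The plan is to sandwich $g_m(I_1)$ between two quantities that both decay like $m^{-(\alpha-\gamma+\beta)}$: from above by the explicit energy-norm based sampling operator of Theorem~\ref{satz:approxhab}, and from below by the approximation numbers, whose exact asymptotics are already available. Concretely, I aim to prove
$$
   m^{-(\alpha-\gamma+\beta)}\ \lesssim\ a_m(I_1)\ \leq\ g_m(I_1)\ \lesssim\ m^{-(\alpha-\gamma+\beta)}.
$$
Here the hypotheses $\min\{\alpha,\alpha+\beta\}>1/2$, $\gamma\geq 0$ and $0<\gamma-\beta<\alpha$ are precisely those needed below, so no further restrictions arise.

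\textbf{Upper bound for $g_m$.} First I would fix $\varepsilon$ with $0<\varepsilon<\gamma-\beta$, which is possible since $\gamma-\beta>0$, and work with the operator $Q_{\Delta_\varepsilon(\xi)}$ from \eqref{eq101}. This is a genuine sampling operator, and Theorem~\ref{satz:approxhab} applies verbatim, giving $\|f-Q_{\Delta_\varepsilon(\xi)}f\|_{H^\gamma(\tor^d)}\leq C\,2^{-\xi}\|f\|_{H^{\alpha,\beta}(\tor^d)}$. The decisive observation is that $\Delta_\varepsilon(\xi)$ has the same shape as $\Delta(\xi)$ in \eqref{f08} but with $\alpha$ replaced by $\alpha-\varepsilon$ and $\gamma-\beta$ replaced by $\gamma-\beta-\varepsilon$; since $(\alpha-\varepsilon)-(\gamma-\beta-\varepsilon)=\alpha-(\gamma-\beta)$ is \emph{unchanged}, Corollary~\ref{cor1}(i) applied with the shifted parameters shows that $Q_{\Delta_\varepsilon(\xi)}$ uses at most $C\,2^{\xi/(\alpha-\gamma+\beta)}$ function values. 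To pass from $\xi$ to $m$, given $m$ I would choose $\xi$ so that $C\,2^{\xi/(\alpha-\gamma+\beta)}=m$, i.e.\ the number of sampling points is $\leq m$. Then $2^{-\xi}\asymp m^{-(\alpha-\gamma+\beta)}$, and since $Q_{\Delta_\varepsilon(\xi)}$ is admissible in the definition of $g_m$, this yields $g_m(I_1)\leq C\,2^{-\xi}\lesssim m^{-(\alpha-\gamma+\beta)}$.

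\textbf{Lower bound for $g_m$.} Here I would use the elementary fact that every sampling operator $f\mapsto\sum_{j=1}^m f(x_j)\psi_j$ is a linear map of rank at most $m$. Thus the class of operators admissible for $g_m$ is contained in the class admissible for $a_m$, and taking the infimum over the smaller class can only increase it, so $g_m(I_1)\geq a_m(I_1)$. The matching lower estimate for the approximation numbers, namely $a_m(I_1)\gtrsim m^{-(\alpha-\gamma+\beta)}$ (in fact the full equivalence $a_m(I_1)\asymp m^{-(\alpha-\gamma+\beta)}$), is already established in \cite{DiUl13} and collected in the Appendix. Chaining these relations with the upper bound produces the displayed sandwich, hence both $g_m(I_1)\asymp a_m(I_1)$ and the common rate $m^{-(\alpha-\gamma+\beta)}$.

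\textbf{Where the care lies.} No single step is deep, but the proof hinges on the cancellation of $\varepsilon$ in the exponent: the $\varepsilon$-enlargement of the index set is exactly what lets Theorem~\ref{satz:approxhab} deliver the clean geometric decay $2^{-\xi}$, while the fact that $\varepsilon$ shifts $\alpha$ and $\gamma-\beta$ by the \emph{same} amount keeps the rate-determining quantity $\alpha-(\gamma-\beta)$ in Corollary~\ref{cor1} untouched. This is what forces the upper and lower rates to coincide with no logarithmic perturbation, in contrast to the limiting case $\gamma=\beta$. The only genuinely external ingredient is the lower bound on $a_m$ from \cite{DiUl13}; everything else is assembled from the constructive results proved earlier in the paper.
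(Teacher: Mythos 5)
Your proposal is correct and follows essentially the same route as the paper: lower bound via $g_m\ge a_m$ together with the known asymptotics of the approximation numbers from the Appendix, and upper bound via Theorem~\ref{satz:approxhab} combined with the point count of Corollary~\ref{cor1}(i) applied to the shifted parameters $\alpha-\varepsilon$ and $\gamma-\beta-\varepsilon$, whose difference $\alpha-(\gamma-\beta)$ is unchanged. The only cosmetic difference is that you choose $\xi$ as a function of $m$ directly, whereas the paper proves the bound for $m=D_\varepsilon(\xi)$ and then invokes monotonicity; these are equivalent.
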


\begin{proof}
Proposition\ \ref{approx} below shows
\[
m^{-(\alpha -(\gamma - \beta))} \lesssim a_m(I_1:\hab \to H^\gamma (\tor^d)) \le 
g_m(I_1:\hab \to H^\gamma (\tor^d))\, , \qquad m \in \N\, . 
\]
Suppose $0 < \varepsilon < \gamma - \beta$. Let $D_\varepsilon (\xi)$ be the number of function values the operator 
$\Delta_{\varepsilon}(\xi)$ is using.
Then Theorem \ref{satz:approxhab} yields
$$
\|f-Q_{\Delta_{\varepsilon}(\xi)}f\|_{\hiso}  \lesssim  
\Big( \frac{D_\varepsilon(\xi)}{2^{\xi/(\alpha -(\gamma - \beta)) }}\Big)^{\alpha -(\gamma - \beta)}\, 
D_\varepsilon(\xi)^{-(\alpha -(\gamma - \beta))}\, \|f\|_{\hab}\,.
$$
Applying Corollary \ref{cor1}, (i) with $\alpha-\eps$ and $\gamma-\beta-\eps$ shows that 
$$
   \frac{D_\varepsilon(\xi)}{2^{\xi/(\alpha -(\gamma - \beta)) }} \leq C_1(\eps,\alpha,\gamma-\beta,d)\,.
$$
This proves the estimate from above in case $m= D_\varepsilon$. The corresponding estimate for all $m$ follows by a simple monotonicity argument. 
\end{proof}

\begin{rem} \rm
In case $\beta=0$ Griebel and Hamaekers recently proved a similar upper bound for $g_m(I_1)$ (see \cite[Lemma 9]{GH14}). Under the conditions of Theorem \ref{main4} the family of sampling operators $Q_{\Delta_{\varepsilon}(\xi)}$ for $0 < \varepsilon < \gamma - \beta$
is optimal in order.
\end{rem}

The next theorem collects sharp results for sampling numbers which are based on Smolyak's algorithm. 

\begin{satz}
 \label{main1}
Let $\alpha >1/2$ and suppose $0 < \gamma < \alpha$.\\
{\rm (i)} We have for $m\ge 2$
\be\label{ws-04}
g_m(I_5:\hmixed \to H^\gamma_{\mix} (\tor^d)) \asymp 
a_m(I_5:\hmixed \to H^\gamma_{\mix} (\tor^d)) \asymp 
m^{-(\alpha-\gamma)}(\log m)^{(d-1)(\alpha-\gamma)}\,.
\ee
{\rm (ii)} Let $2 < q < \infty$. Then we have for $m \geq 2$
\be\label{ws-05}
\begin{split}
   g_m(I_4:\hmixed \to L_q (\tor^d)) &\asymp a_m(I_4:\hmixed\to L_q (\tor^d))\\ 
   &\asymp m^{-\alpha + \frac 12 - \frac 1q} \, (\log m)^{(d-1)\, (\alpha - \frac 12 + \frac 1q)} \,.
\end{split}
\ee
\\
{\rm (iii)} In case  $q = \infty$ it holds for all $m\geq 2$
\be\label{ws-05b}
g_m(I_4:\hmixed \to L_\infty (\tor^d)) \asymp 
a_m(I_4:\hmixed \to L_\infty (\tor^d)) \asymp
m^{-\alpha + \frac 12} \, (\log m)^{(d-1)\, \alpha} \,.
\ee
\end{satz}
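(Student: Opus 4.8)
The plan is to derive all three upper bounds constructively from the Smolyak operator $Q_{\Delta(\alpha m)}$ with $\Delta(\alpha m)=\{k\in\N_0^d:|k|_1\le m\}$, and to read off the matching lower bounds from the elementary relation $a_m\le g_m$ together with the approximation-number asymptotics already recorded in the Appendix (compare \eqref{f03} and \eqref{f04}). In every case the scheme is identical: an approximation theorem from Section \ref{smolyak} provides an error estimate $\|f-Q_{\Delta(\alpha m)}f\|_Y\lesssim \varphi(m)\fmixed$, while Corollary \ref{cor2} shows that $Q_{\Delta(\alpha m)}$ samples $f$ at $N\asymp m^{d-1}2^{m}$ points. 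Inverting $N\asymp m^{d-1}2^{m}$ (so $2^{m}\asymp N\,m^{-(d-1)}$ and $m\asymp\log N$) converts $\varphi(m)$ into a decay rate in $N$, bounding $g_N$ from above; a monotonicity argument then fills in the $m$ lying between consecutive sampling values $N(m)$, using that $N(m+1)/N(m)$ stays bounded.

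For part (i) I would apply Theorem \ref{mixmix} with $\xi=(\alpha-\gamma)m$, so that $\Delta(\xi)=\{|k|_1\le m\}$ and $\|f-Q_{\Delta}f\|_{\hmixedga}\lesssim 2^{-(\alpha-\gamma)m}\fmixed$. Substituting $2^{m}\asymp N\,m^{-(d-1)}$ gives $2^{-(\alpha-\gamma)m}\asymp N^{-(\alpha-\gamma)}m^{(d-1)(\alpha-\gamma)}$, and $m\asymp\log N$ yields $g_N\lesssim N^{-(\alpha-\gamma)}(\log N)^{(d-1)(\alpha-\gamma)}$, which is exactly \eqref{ws-04}.

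Part (ii) reduces to the mixed--mixed case via Lemma \ref{lem:qineq}: since $\|g\|_q\lesssim\|g\|_{H^{1/2-1/q}_{\mix}}$ for $2<q<\infty$, I would write $\|f-Q_\Delta f\|_q\lesssim\|f-Q_\Delta f\|_{H^{1/2-1/q}_{\mix}}$ and then invoke Theorem \ref{mixmix} with target smoothness $\gamma=1/2-1/q>0$ (admissible because $\alpha>1/2>1/2-1/q$). Choosing $\xi=(\alpha-1/2+1/q)m$ produces $\Delta=\{|k|_1\le m\}$ and error $\lesssim 2^{-(\alpha-1/2+1/q)m}$, so the counting of (i) delivers the rate $N^{-(\alpha-1/2+1/q)}(\log N)^{(d-1)(\alpha-1/2+1/q)}$ of \eqref{ws-05}. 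For part (iii) the $L_q$-inequality degenerates as $q\to\infty$, so instead I would use the pointwise Nikol'skij-type bound of Theorem \ref{tem}, namely $\|f-Q_{\Delta(\alpha m)}f\|_\infty\lesssim 2^{-m(\alpha-1/2)}m^{(d-1)/2}\fmixed$. Here the exponents combine cleanly: $2^{-m(\alpha-1/2)}\asymp N^{-(\alpha-1/2)}m^{(d-1)(\alpha-1/2)}$, and the extra factor $m^{(d-1)/2}$ raises the logarithmic power from $(d-1)(\alpha-1/2)$ to $(d-1)(\alpha-1/2)+(d-1)/2=(d-1)\alpha$, giving precisely \eqref{ws-05b}.

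The lower bounds require no new analysis: $a_m\le g_m$ holds for every $m$, and the approximation numbers $a_m(I_5)$ and $a_m(I_4)$ already have the asserted orders by the results collected in the Appendix. I expect the only genuinely delicate points to be bookkeeping: checking that the hypotheses of Theorem \ref{mixmix} (namely $\alpha>\max\{\gamma,1/2\}$, respectively $\alpha>\max\{1/2-1/q,1/2\}$) are satisfied in each instance, and carrying out the monotonicity interpolation between the discrete sampling values $N(m)\asymp m^{d-1}2^{m}$ so that the estimate holds for all $m\ge 2$ rather than only along that subsequence. Since all the substantive work is already contained in Theorems \ref{mixmix} and \ref{tem}, Lemma \ref{lem:qineq}, and Corollary \ref{cor2}, the proof amounts to a careful translation of rate-in-$\xi$ into rate-in-$N$.
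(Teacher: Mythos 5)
Your proposal is correct and follows essentially the same route as the paper: Theorem \ref{mixmix} with $\xi=(\alpha-\gamma)m$ plus the point count of Corollary \ref{cor2} for (i), Lemma \ref{lem:qineq} with $\gamma=1/2-1/q$ reducing (ii) to (i), Theorem \ref{tem} for (iii), and the lower bounds read off from $a_m\le g_m$ together with the approximation-number asymptotics in the Appendix. The exponent bookkeeping, including the combination $(d-1)(\alpha-1/2)+(d-1)/2=(d-1)\alpha$ in (iii), matches the paper's argument.
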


\begin{proof}
{\em Proof of (i).}\\ Proposition \ref{approx}, (iii)  below shows for $m\geq 2$
\[
m^{-(\alpha-\gamma)}(\log m)^{(d-1)(\alpha-\gamma)} \lesssim a_m(I_5:\hmixed \to H^\gamma_{\mix} (\tor^d)) \le 
g_m(I_5:\hmixed \to H^\gamma (\tor^d))\,.
\]
Concerning the estimate from above we apply Theorem \ref{mixmix} with $\xi = (\alpha-\gamma)m$ for $m\in \N$. This gives 
\be\label{668}
    \|f-Q_{\Delta((\alpha-\gamma)m)}f\|_{H^{\gamma}_{\mix}(\tor^d)} \lesssim 2^{-(\alpha-\gamma)m}\quad,\quad m\in \N\,.
\ee
Let $D(m)$ be the number of function values used by $Q_{\Delta((\alpha-\gamma)m)}f$. By Theorem \ref{cor2},(i),(ii) we know that 
$$
    D(m) \asymp m^{d-1}2^m\quad\mbox{and}\quad \log D(m) \asymp \log m\,.
$$ 
Rewriting \eqref{668} gives
$$
  \|f-Q_{\Delta((\alpha-\gamma)m)}f\|_{H^{\gamma}_{\mix}(\tor^d)} \lesssim D(m)^{-(\alpha-\gamma)} (\log D(m))^{(d-1)(\alpha-\gamma)}\,.
$$
Obvious monotonicity arguments complete the proof.
\\
{\em Proof of (ii).}\\
The estimate from below for the approximation numbers is due to Romanyuk \cite{Ro}. The corresponding estimate from above for the sampling numbers
is an immediate consequence of Lemma \ref{lem:qineq} together with (i), where $\gamma = 1/2-1/q$.
\\
{\em Proof of (iii).}\\
The estimate from below for the approximation numbers is due to Temlyakov \cite{T93}. Let us mention that this lower bound is also applied by a recent general result by Cobos, K\"uhn and Sickel \cite{CKS14}. For the details we refer to Proposition \ref{approx} below. The estimate from above for sampling numbers follows from Theorem \ref{tem} combined with 
Corollary \ref{cor2},(i),(ii) in the same way as in (i). 
\end{proof}

\begin{rem}
 \rm
As we have mentioned before, not all the results in Theorem\ \ref{main1} are new. Part (iii) reproduces a result due to Temlyakov 
\cite{T93}. Note, that our methods allow for proving this result in the framework of classical trigonometric interpolation, see Theorem \ref{tem},
whereas Temlyakov had to use de la Vall\'ee-Poussin sampling operators. In any case, it is remarkable that 
Smolyak's algorithm yields optimal bounds here.   
A non-periodic version of (ii) has been proved recently in D\~{u}ng \cite{Di11}.
\end{rem}


\section{Appendix: approximation numbers}

\label{apx}


Corresponding estimates for the approximation numbers serve as a natural benchmark 
for the sampling problem we are interested in. In the sequel we mainly collect the relevant results 
from \cite{DiUl13}.

\begin{prop}\label{approx}
 {\rm (i)} Let $\alpha > \gamma -\beta >0$. Then
\[
a_n (I_1:\hab \to H^\gamma (\tor^d)) \asymp n^{-\alpha + \gamma - \beta}\, , \qquad n \in \N\, .
\]
{\rm (ii)} Let $\alpha > \gamma -\beta =0$. Then
\[
a_n (I_2: \hab\to H^\beta (\tor^d)) \asymp n^{-\alpha}\, (\log n)^{(d-1)\alpha}\, , \qquad 2\leq n \in \N\, .
\]
{\rm (iii)} Let $\alpha > \gamma \ge 0$. Then
\[
a_n (I_5: H^{\alpha}_\mix (\tor^d) \to H^\gamma_\mix (\tor^d)) \asymp n^{-(\alpha - \gamma)}\, (\log n)^{(d-1)(\alpha-\gamma)}\, , \qquad 2\leq n \in \N\, .
\]
In particular, 
\be\label{approxtem}
a_n (I_3: H^{\alpha}_\mix (\tor^d) \to L_2(\tor^d) \asymp n^{-\alpha}\, (\log n)^{\alpha(d-1)}\, , \qquad 2\leq n \in \N.
\ee
{\rm (iv)} Let $\alpha > \frac{1}{2}$. Then
\[
a_n (I_4: H^{\alpha}_\mix (\tor^d) \to L_{\infty}(\T)) \asymp n^{-\alpha+\frac{1}{2}}(\log n)^{\alpha(d-1)}\, , \qquad n \in \N\, .
\]
\end{prop}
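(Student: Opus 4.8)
The final statement is Proposition~\ref{approx}, a collection of asymptotic equivalences for the approximation numbers $a_n$ of four embeddings. Let me sketch how I would prove each part.

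=== BEGIN PLAN ===

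\textbf{Overall strategy.} The plan is to treat each part as a two-sided estimate, separating the upper bound (constructed via a linear projection onto a suitable hyperbolic-cross space) from the lower bound (obtained by testing against a well-chosen finite-dimensional subspace on which the embedding is, up to constants, an isometry rescaled by the relevant weight). Since all four spaces are Hilbert spaces with diagonal structure in the Fourier/Littlewood--Paley basis, the key observation I would exploit is that approximation numbers of a diagonal operator between weighted $\ell_2$ spaces are exactly the non-increasing rearrangement of the weight sequence. Thus each part reduces to a combinatorial counting problem: given the block weights $2^{\alpha|k|_1+\beta|k|_\infty}$ (source) and $2^{\gamma|k|_\infty}$ or $2^{\gamma|k|_1}$ (target), I count how many dyadic blocks $\mathcal{P}_k$ carry dimension $2^{|k|_1}$ below a given threshold, and read off the decay of the rearranged singular values.

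\textbf{Part (i): the purely polynomial rate.} Here the target $H^\gamma$ has isotropic weight $2^{\gamma|k|_\infty}$, so the relevant ratio on block $\mathcal{P}_k$ is $2^{-\alpha|k|_1+(\gamma-\beta)|k|_\infty}$. First I would diagonalize the embedding and identify the singular values as these block ratios, each with multiplicity $\asymp 2^{|k|_1}$. The upper bound then follows by choosing the projection onto $\mathcal{H}_{\Delta(\xi)}$ with $\Delta(\xi)$ from \eqref{f08}: the error is governed by $\sup_{k\notin\Delta(\xi)}2^{-\alpha|k|_1+(\gamma-\beta)|k|_\infty}$, which is $\lesssim 2^{-\xi}$ by definition of $\Delta(\xi)$, while the rank is $\asymp 2^{\xi/(\alpha-(\gamma-\beta))}$ by Lemma~\ref{rank1}; eliminating $\xi$ yields $n^{-(\alpha-(\gamma-\beta))}$. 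For the lower bound I would restrict to the anisotropic ``pencil'' of blocks concentrated along a single coordinate axis (as in the $k^*$ construction in Lemma~\ref{rank1}), where $|k|_1=|k|_\infty$ and the weight ratio is genuinely $2^{-(\alpha-(\gamma-\beta))|k|_\infty}$ without logarithmic multiplicity, giving a matching lower bound by a standard Bernstein/volume argument.

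\textbf{Parts (ii), (iii), (iv): the logarithmic cases.} When $\gamma-\beta=0$ (part (ii)) or the target is the mixed space $H^\gamma_{\mix}$ (part (iii)) the relevant ratio becomes $2^{-\alpha|k|_1}$, depending only on $|k|_1$; now the multiplicity of the value $2^{-\alpha j}$ is the total dimension $\sum_{|k|_1=j}2^{|k|_1}\asymp 2^j j^{d-1}$, which is exactly where the logarithmic factor $(\log n)^{(d-1)(\alpha-\gamma)}$ enters. I would invoke Lemma~\ref{rank2} (or Lemma~\ref{lem:smolsum}) to count $\sum_{|k|_1\le m}2^{|k|_1}\asymp m^{d-1}2^m$, so that a Smolyak projection of rank $n\asymp m^{d-1}2^m$ achieves error $\asymp 2^{-\alpha m}=\big(2^m\big)^{-\alpha}\asymp n^{-\alpha}(\log n)^{(d-1)\alpha}$ after inverting $n\asymp m^{d-1}2^m$ (note $\log n\asymp m$). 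Part (iii) is identical with $\alpha$ replaced by $\alpha-\gamma$, and \eqref{approxtem} is the special case $\gamma=0$. For part (iv), the $L_\infty$ target, the diagonal/Hilbert-space argument no longer applies directly; instead I would cite the relevant lower bound (Temlyakov \cite{T93}, also recovered via Cobos--K\"uhn--Sickel \cite{CKS14}) and obtain the upper bound from a Nikol'skij-inequality argument giving the extra $2^{|k|_1/2}$ factor, exactly paralleling the proof of Theorem~\ref{tem}.

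\textbf{Main obstacle.} The genuinely delicate point is the lower bound in part (i): one must verify that the purely polynomial rate (no logarithm) is correct, which requires showing the non-increasing rearrangement of the weights $\{2^{-\alpha|k|_1+(\gamma-\beta)|k|_\infty}\}$ decays like $n^{-(\alpha-(\gamma-\beta))}$ \emph{without} logarithmic inflation. This hinges on the anisotropic geometry of $\Delta(\xi)$ --- the fact that $\#\{k:\alpha|k|_1-(\gamma-\beta)|k|_\infty\le\xi\}$ weighted by block dimension grows like a pure power of $2^\xi$ rather than picking up a $\xi^{d-1}$ factor, which is precisely the content of Lemma~\ref{rank1}. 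For the remaining parts the combinatorics is standard; since all four statements are quoted from \cite{DiUl13}, I would ultimately reference that source for the complete computations and present here only the structural reduction to counting block dimensions.

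=== END PLAN ===
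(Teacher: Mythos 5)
Your plan is correct, but it reaches the result by a noticeably different route than the paper for parts (ii) and (iii). The paper does \emph{not} recount singular values for these cases: it takes Temlyakov's estimate \eqref{approxtem} as the known anchor and transfers it to $H^{\alpha,\beta}\to H^\beta$ and $H^{\alpha}_{\mix}\to H^{\gamma}_{\mix}$ via commutative diagrams built from the Fourier-multiplier isomorphisms $A,B$ (lifting operators), using only the multiplicativity $a_n(B\circ I^*\circ A)\le\|A\|\,\|B\|\,a_n(I^*)$ in both directions. Your diagonalization-plus-rearrangement argument instead reproves \eqref{approxtem} from scratch by counting $\sum_{|k|_1\le m}2^{|k|_1}\asymp m^{d-1}2^m$; this is self-contained and equally rigorous (the embeddings are genuinely diagonal in the Fourier basis, so $a_n$ equals the $(n+1)$-st rearranged weight), whereas the paper's reduction is shorter and isolates the one hard input. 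For part (i) the two approaches are essentially the same: the paper's lower bound uses Tikhomirov's lemma on the ball of the full hyperbolic-cross subspace $\mathcal{H}_{\Delta(\xi)}$ together with the Bernstein inequality $\|f\|_{H^{\alpha,\beta}}\lesssim 2^{\xi}\|f\|_{H^{\gamma}}$, while your axis-pencil of blocks is a smaller subspace that delivers the same exponent; both hinge, as you correctly diagnose, on Lemma~\ref{rank1} giving a pure power of $2^{\xi}$ with no $\xi^{d-1}$ inflation. The only substantive shortfall is part (iv): the paper actually \emph{executes} the lower bound via the Osipenko--Parfenov inequality $a_n(I_4)\gtrsim\big(\sum_{j\ge n}a_j^2(I_3)\big)^{1/2}$ followed by an integral estimate of the tail sum, whereas you only cite the sources; if you intend your write-up to be self-contained you should include that short computation, since the upper bound you propose (via Theorem~\ref{tem} and $a_n\le g_n$) is fine.
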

 
\begin{proof} Let us consider (iii) first. The relation in \eqref{approxtem} is due to Temlyakov \cite[Theorem~III.4.4]{T93}. For $\gamma>0$
we use the commutative diagram
\[
\begin{CD}
H^{\alpha}_\mix (\tor^d)@>I>> H^{\gamma}_\mix (\tor^d)\\
@VAVV @AABA\\
H^{\alpha-\gamma}_\mix (\tor^d)@ >I^*>> L_2 (\tor^d)\, ,
\end{CD}
\]
where
\beqq
A f (x) & := &  \sum_{k \in \Z} c_k (f)\, \prod_{j=1}^d (1+|k_j|^2)^{\gamma/2}\, e^{ikx}\, , 
\\
B f (x) & := &  \sum_{k \in \Z} c_k (f)\, \prod_{j=1}^d (1+|k_j|^2)^{-\gamma/2}\, e^{ikx}\, .
\eeqq
Clearly, $A:~H^{\alpha}_\mix (\tor^d) \to H^{\alpha-\gamma}_\mix (\tor^d)$ and 
$B:~L_2 (\tor^d) \to H^{\gamma}_\mix (\tor^d)$ are isomorphisms.
In addition, we have $I = B \circ I^* \circ A$. The multiplicativity of the approximation numbers 
implies
\[
a_n (I:H^{\alpha}_\mix (\tor^d) \to H^\gamma_\mix (\tor^d)) \le \|\, A \, \|
\, \|\, B\, \| \, a_n (I^{*}:H^{\alpha-\gamma}_\mix (\tor^d) \to L_2 (\tor^d)) \, .
\]
Taking \eqref{approxtem} into account yields the estimate from above. For the lower bound we use the commuative diagram the other way around to see $I^* = B^{-1} \circ I \circ A^{-1}$. We obtain
\[
a_n (I^{*}:H^{\alpha-\gamma}_\mix (\tor^d) \to L_2 (\tor^d)) \le \|\, A^{-1} \, \|
\, \|\, B^{-1}\, \| \, a_n (I:H^{\alpha}_\mix (\tor^d) \to H_\mix^\gamma (\tor^d)) \, .
\]
Again \eqref{approxtem} yields (iii). To prove (ii) we use the commutative diagram
\[
\begin{CD}
H^{\alpha,\beta} (\tor^d)@>I>> H^{\beta} (\tor^d)\\
@VAVV @AABA\\
H^{\alpha}_\mix (\tor^d)@ >I^*>> L_2 (\tor^d)\, 
\end{CD}
\]
with $A,B$ modified accordingly. The result follows by \eqref{approxtem}.

The proof of (i) can be found in \cite[Theorem\ 4.7]{DiUl13}, however, with the additional restriction that $2(\gamma-\beta)>\alpha>\gamma-\beta$. For the 
convenience of the reader we give a proof without this restriction. The lower bound in (i) is a consequence of a well-known abstract result (see \cite[Theorem\ 1]{Ti60} or \cite[Theorem\ 1.4, p. 405]{LGM96}) on lower bounds for linear $n$-widths, namely

\begin{lem} \label{Tikh}
Let $L_{n+1}$ be an ${n+1}$-dimensional subspace in a Banach space $X$, and 
$B_{n+1}(r):= \{f \in L_{n+1}: \ \|f\|_X \le r\}$. Then
\[
\lambda_n(B_{n+1}(r), X)
\ \geq \
r.
\]
Here $\lambda_n(B_{n+1}(r), X)$ denotes the linear $n$-width of the set $B_{n+1}(r)$ in $X$.
\end{lem}
\noindent We apply this Lemma with $X = H^{\gamma}$ and $L_{n+1}$ to be the subspace of all trigonometric polynomials with frequencies in $\mathcal{H}_{\Delta(\xi)}$ from \eqref{hy} with $\Delta(\xi) = \Delta(\alpha,\beta,\gamma;\xi)$ and $\xi$ chosen accordingly. From Lemma \ref{rank1} we get 
$n \asymp 2^{\xi/(\alpha-(\gamma-\beta))}$. We immediately see the Bernstein type
inequality 
\begin{equation}\label{bern}
   \|f\|_{H^{\alpha,\beta}} \lesssim 2^{\xi}\|f\|_{H^{\gamma}}\quad,\quad f\in L_{n+1}\,.
\end{equation}
Hence, by choosing $r:=2^{-\xi}$ we get from \eqref{bern} that $B_{n+1}(r)$ is contained in the unit ball of $H^{\alpha,\beta}$. Finally, by 
Lemma \ref{Tikh} we conclude
$$
    a_n(I_1) \geq \lambda_n(B_{n+1}(2^{-\xi}),H^{\gamma}) =2^{-\xi} \asymp n^{-(\alpha-(\gamma-\beta))}\,.
$$
For the proof of (iv) we apply a lemma that goes back to the work of Osipenko and Parfenov (see \cite{OP95}). For more details we refer to the recent preprint by Cobos, K\"uhn and Sickel \cite{CKS14}. Plugging \eqref{approxtem} into \cite[Lemma 3.3]{CKS14} yields
\beq
a_n(I_4:H^{\alpha}_{mix}(\tor^d)\to L_{\infty}(\T)) &\geq & \frac{1}{(2\pi)^{\frac{d}{2}}}\Bigg( \sum_{j=n}^{\infty} a_j^2(I_3:H^{\alpha}_{mix}(\tor^d)\to L_2(\tor^d))\Bigg)^{\frac{1}{2}}\nonumber\\
&\gtrsim&\Bigg( \sum_{j=n}^{\infty} j^{-2\alpha}\log(j)^{2(d-1)\alpha}\Bigg)^{\frac{1}{2}}\label{eq:ainf1}.
\eeq
Estimating the sum by an integral gives
\beq
\sum_{n}^{\infty} j^{-2\alpha}(\log j) ^{2(d-1)\alpha}&\asymp&\int_{n}^{\infty} y^{-2\alpha}(\log y)^{(d-1)2\alpha}dy\nonumber\\
&\geq& (\log n)^{2(d-1)\alpha}\int_{n}^{\infty} y^{-2\alpha}dy\nonumber\\
&\asymp& (\log n)^{2(d-1)\alpha}n^{-2\alpha+1}.\label{eq:ainf2}
\eeq
Inserting \eqref{eq:ainf2} into \eqref{eq:ainf1} yields the lower bound in (iv).
\end{proof}

{~}\\
\noindent
{\bf Acknowledgments}
The authors would like to thank the organizers of the HCM-workshop ``Discrepancy, Numerical Integration, and Hyperbolic Cross Approximation'', 
where this work has been initiated, for providing a pleasant and fruitful working atmosphere. In addition, the authors would like to thank the 
Hausdorff-Center for Mathematics (HCM) and the Bonn International Graduate School (BIGS) for providing additional financial support to finish this work. 
Dinh Dung's research work is funded by Vietnam National Foundation for Science and Technology Development (NAFOSTED) under  Grant No. 102.01-2014.02, and a part of it was done when he was working as a research professor at the Vietnam Institute for Advanced Study in Mathematics (VIASM). He  would like to thank  the VIASM  for providing a fruitful research environment and working condition.

\end{document}